\documentclass[11pt, reqno, twoside, makeidx]{amsart}
\usepackage[margin=2.7cm, marginpar=1cm]{geometry}

\usepackage{wrapfig}
\usepackage{marginnote}
\usepackage{hyperref}
\usepackage[english]{babel}
\usepackage[utf8]{inputenc}
\usepackage[T1]{fontenc}
\usepackage{amsmath,amsthm,amssymb,amsfonts}
\numberwithin{equation}{section}
\usepackage{enumerate}
\usepackage{faktor}
\usepackage{dsfont}
\usepackage{scalerel,stackengine}
\usepackage{textcomp}
\usepackage{graphicx}
\usepackage{extarrows}
\usepackage{epigraph}
\usepackage{graphicx}
\usepackage{subcaption}
\usepackage{sidecap}
\usepackage{indentfirst}
\usepackage{tikz}
\usepackage{tikz-cd}
\usepackage{tkz-euclide}
\usetikzlibrary{shapes.misc,arrows,matrix,patterns,decorations.markings,positioning}
\tikzset{cross/.style={cross out, draw=black, minimum size=2*(#1-\pgflinewidth), inner sep=0pt, outer sep=0pt},
cross/.default={4.5pt}}
\usepackage{pgfplots}
\usepackage{caption}
\usepackage{float}
\usepackage{color}
\usepackage{epstopdf}
\usepackage{epsfig}
\usepackage{stmaryrd}
\usepackage{color}
\usepackage{geometry}
\usepackage{multicol}
\usepackage{verbatim}
\usepackage{wrapfig}
\usepackage{float}

\DeclareMathOperator{\Ker}{Ker }
\DeclareMathOperator{\Imm}{Im }
\DeclareMathOperator{\tb}{tb}
\DeclareMathOperator{\rot}{rot}
\DeclareMathOperator{\self}{sl}

\DeclareMathOperator{\Spin}{Spin}

\DeclareMathOperator{\height}{height_{\mathcal F}}
\DeclareMathOperator{\xist}{\xi_{std}}
\renewcommand{\geq}{\geqslant}
\renewcommand{\leq}{\leqslant} 
\renewcommand{\epsilon}{\varepsilon}

\newcommand{\N}{\mathbb{N}}
\newcommand{\Z}{\mathbb{Z}}
\newcommand{\Q}{\mathbb{Q}}
\newcommand{\F}{\mathbb{F}}

\newcommand{\C}{\mathbb C}
\newcommand{\J}{\mathcal J}

\newcommand{\s}{\mathfrak{s}}

\DeclareFontFamily{U}{mathx}{\hyphenchar\font45}
\DeclareFontShape{U}{mathx}{m}{n}{
      <5> <6> <7> <8> <9> <10>
      <10.95> <12> <14.4> <17.28> <20.74> <24.88>
      mathx10
      }{}
\DeclareSymbolFont{mathx}{U}{mathx}{m}{n}
\DeclareFontSubstitution{U}{mathx}{m}{n}
\DeclareMathAccent{\widecheck}{0}{mathx}{"71}
\DeclareMathAccent{\wideparen}{0}{mathx}{"75}

\newtheorem{teo}{Theorem}[section]
\newtheorem*{teo*}{Theorem}
\newtheorem{lemma}[teo]{Lemma}
\newtheorem{prop}[teo]{Proposition}
\newtheorem*{prop*}{Proposition}
\newtheorem{defin}[teo]{Definition}
\newtheorem{cor}[teo]{Corollary}

\newtheorem{remark}[teo]{Remark}

\usepackage{xpatch}
\makeatletter
\xpatchcmd{\@thm}{\thm@headpunct{.}}{\thm@headpunct{}}{}{}
\makeatother

\pgfplotsset{compat=1.18}
\begin{document}
\title[Fillable structures on negative-definite Seifert fibred spaces]{Fillable structures on negative-definite \\ Seifert fibred spaces}
\author{Alberto Cavallo and Irena Matkovi\v c}
\address{HUN-REN Alfr\'ed R\'enyi Institute of Mathematics, Budapest 1053, Hungary}
\email{acavallo@impan.pl}
\address{Uppsala universitet, Uppsala 751 06, Sweden}
\email{irma6504@student.uu.se}
\subjclass[2020]{57K18, 57K33, 57K43, 32Q35}


\begin{abstract}
 We classify fillable contact structures on all negative-definite star-shaped plumbings. We show that such Seifert fibred spaces admit a unique negative maximal twisting number and compute it explicitly using the Alexander filtration in lattice cohomology, providing its first Floer-theoretic interpretation. 
 In addition, we show that all the negative-twisting tight structures on these manifolds are induced by the Stein structures on the minimal resolution of the underlying complex surface singularity.
 As an application, we provide a necessary condition for a negative-definite Seifert fibred space to admit a separating contact-type embedding in a strong symplectic filling of a generalised $L$-space. 
\end{abstract}

\maketitle

\thispagestyle{empty} \vspace{-0.85cm}

\section{Introduction}
Investigating contact structures on Seifert fibred spaces provides a bridge between $3$-dimensional topology and algebraic geometry. Indeed, links of normal surface singularities naturally inherit a contact structure, known as the Milnor fillable structure, while their JSJ decomposition consists of Seifert fibred pieces, see \cite{Nemethi0,Nemethi,BP}. In this paper, we adopt a topological perspective and classify all the symplectically fillable contact structures on negative-definite star-shaped plumbings, a special case of which are canonically oriented Brieskorn spheres.

It has long been understood that the behaviour of contact structures is constrained when the underlying manifold is an $L$-space, see \cite{Wu,G-,LS-existance,GLS,Irena}. Despite various indications in the literature, no comparable unifying picture has previously emerged for negative-definite Seifert fibred spaces, beyond the fact that they arise as links of surface singularities and therefore support the Milnor fillable structure. We will see that all fillable structures on these manifolds are essentially of the same type, admitting a Stein filling diffeomorphic to the minimal resolution.

The classification of tight structures usually requires matching between constructions and possible convex decompositions. It goes back to the work of Giroux \cite{Giroux1,Giroux2} and Honda \cite{Honda1,Honda2} on circle bundles that the behaviour of contact structures on fibred manifolds is governed by whether there exists a regular fibre around which the contact structure does not twist. The key insight of the present work is the recognition that a particular convex decomposition due to Ghiggini \cite{G-} and Massot \cite{Massot} translates the existence of contact structures with negative twisting number into a numerical problem. For negative-definite Seifert fibred spaces this problem admits a unique solution, leading to a unified description of all fillable contact structures in the above sense.

Although the Heegaard Floer contact invariant \cite{OSz-contact} has been instrumental in revealing many properties of tight contact structures on Seifert fibred spaces, and the maximal twisting number is their central numerical invariant, it is only here that a Heegaard Floer interpretation of maximal twisting is established. To this end, we introduce a new invariant associated to non-vanishing Heegaard Floer contact classes.

While the behaviour of negative-twisting structures on negative-definite Seifert fibred spaces is shown to be special, the new techniques developed in this paper, which allow us to determine the possible twisting numbers and interpret them as Heegaard Floer invariants, work generally. In fact, in a subsequent paper we use these ideas to obtain a classification of negative twisting structures on arbitrary Seifert fibred spaces, and a correspondence with the Heegaard Floer groups of the underlying 3-manifolds.

\subsection*{Notation and conventions}
The object of our study consists of Seifert fibred spaces with base orbifold homeomorphic to $S^2$. A 3-manifold of this kind, denoted as $M = M(e_0;r_1,\dots,r_n)$ with the integral Euler number $e_0\in \Z$, and $n$ singular fibres with parameters $r_1, \dots , r_n\in(0,1)\cap\Q$, can be represented with a star-shaped plumbing graph $\Gamma$ which we call the \emph{standard graph} of $M$, as shown in Figure \ref{Seifert}. In these diagrams  the Euler number $e_0$ is the framing of the central vertex, while the parameter $r_i$  determines the framings on the $i$-th leg via the negative continued fraction expansion of $-\frac{1}{r_i}$. The construction is explained in full detail in \cite{Saveliev}. Note that  when $n\geq3$ the 3-manifold $M$ uniquely determines the graph $\Gamma$.

\begin{figure}[ht]  \begin{tikzpicture}[scale=0.8]
    \tkzDefPoints{0/0/A, 2/1.5/B, 2/0.5/C, 2/-1.5/D}  \tkzDefPoints{2.5/-0.5/X, 2.5/-0.4/Y, 2.5/-0.6/Z}
    \tkzDrawSegment(A,B)\tkzDrawSegment(A,C)\tkzDrawSegment(A,D)
    \tkzDrawPoints[fill,black,size=5](A,B,C,D)\tkzDrawPoints[fill,black,size=1](X,Y,Z)
     \tkzLabelPoint[above left](A){$e_0$} \tkzLabelPoint[right](B){$-\frac{1}{r_1}$}\tkzLabelPoint[right](C){$-\frac{1}{r_2}$}
     \tkzLabelPoint[right](D){$-\frac{1}{r_n}$}
\end{tikzpicture}
\hspace{2cm}
 \begin{tikzpicture}[scale=0.7]
    \tkzDefPoints{0/0/A, 1.5/1/B, 1.5/-1/D, 3/-1/E, 5/-1/F, 3/1/H, 5/1/I}  \tkzDefPoints{2.25/-0.1/X'', 2.25/0/Y'', 2.25/0.1/Z''}
    \tkzDefPoints{3.5/-1/X, 4.5/-1/Y}\tkzDefPoints{3.9/-1/P, 4/-1/Q, 4.1/-1/R}\tkzDefPoints{3.5/1/X', 4.5/1/Y'}\tkzDefPoints{3.9/1/P', 4/1/Q', 4.1/1/R'}
    \tkzDrawPoints[fill,black,size=1](P,Q,R)\tkzDrawPoints[fill,black,size=1](P',Q',R')\tkzDrawPoints[fill,black,size=1](X'',Y'',Z'')
    \tkzDrawSegment(A,B)\tkzDrawSegment(A,D)\tkzDrawSegment(E,D)\tkzDrawSegment(E,X)\tkzDrawSegment(Y,F)\tkzDrawSegment(B,H)
    \tkzDrawSegment(H,X')\tkzDrawSegment(Y',I)
    \tkzDrawPoints[fill,black,size=5](A,B,D,E,F,H,I)
     \tkzLabelPoint[above left](A){$e_0$} \tkzLabelPoint[above right](B){$m_1^1$}\tkzLabelPoint[above right](H){$m_2^1$}\tkzLabelPoint[above right](I){$m_{k_1}^1$}
     \tkzLabelPoint[below right](D){$m_1^n$}\tkzLabelPoint[below right](E){$m_2^n$}\tkzLabelPoint[below right](F){$m_{k_n}^n$}
\end{tikzpicture}
     \caption{\smaller[1]{The standard graph of $M(e_0;r_1,...,r_n)$ (left). The framings on the $i$-th leg are given by $[m_1^i,\dots,m_{k_i}^i]$, the negative continued fraction expansion of $-\frac{1}{r_i}$ (right). Note that these plumbings are not necessarily negative-definite.}}
     \label{Seifert}
\end{figure}
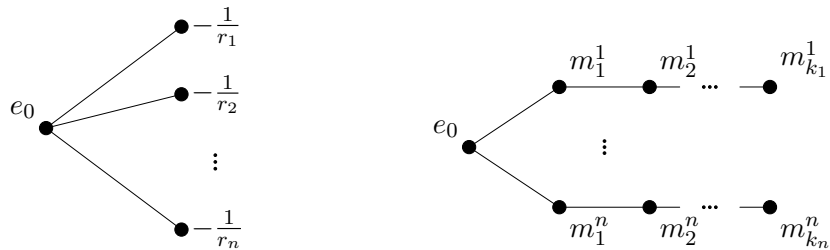   

We also set the convention that every time we consider a 3-manifold presented by a graph, we assume that it inherits the orientation consistent with it. Hence, abusing the notation, we call the Seifert fibred space \emph{negative-definite} if and only if $e(M):=e_0+r_1+\cdots +r_n<0$.

Brieskorn spheres are precisely the Seifert fibred homology spheres. They are the complete intersection singularities, described by exponent parameters $a_i\geq2$, such that $\gcd(a_i,a_j)=1$ for $i\not=j$, as $\Sigma(a_1,...,a_n)$, see \cite[Section 1]{Saveliev} for the exact definition. Note that the Brieskorn parameters $(a_1,...,a_n)$ are related to the Seifert parameters $(e_0;r_1,...,r_n)$ via the equations: 
\begin{equation}
    r_1+...+r_n=-e_0-\dfrac{1}{a_1\cdots a_n},\ \ \ \ \ \  r_i=\dfrac{b_i}{a_i} \ \ \ (b_i\in \Z_{>0})\:, 
    \label{eq:Brieskorn}
\end{equation}
and that  the standard graph $\Gamma$ of $\Sigma(a_1,...,a_n)$ is always negative-definite. For basic definitions, and results regarding plumbing calculus we refer to \cite{GS,Saveliev}.

\subsection*{Classification of negative-twisting tight structures} We recall that the main invariant in the classification of tight contact structures on Seifert fibred spaces is the \emph{maximal twisting number}, which captures the maximal difference between the contact framing and the fibration framing of (any Legendrian realisation of) the regular fibre $K$ of $M$. On negative-definite Seifert fibred spaces we, first, find surgery presentations for all its negative-twisting tight structures, and then show that all the fillable structures are such. The exact number of structures can be computed directly from the Seifert coefficients as shown by Equations \eqref{eq:1} and \eqref{eq:2}. 

\begin{teo}
\label{teo:classification}
 Suppose that $M=M(e_0;r_1,\dots,r_n)$ has negative-definite standard graph. The negative-twisting tight contact structures on $M$ are precisely the Legendrian surgeries on all possible Legendrian realisations of the complete blow-down of the standard graph; moreover, they are all Stein fillable and distinguished, up to isotopy, by their contact invariant $c^+$ in $HF^+(-M)$. 
 
 Furthermore, every structure $\xi$ on $M$ with non-vanishing $c^+(\xi)$ is negative-twisting, and then every symplectically fillable structure on $M$ is as above.
\end{teo}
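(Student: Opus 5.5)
The strategy is to bound the number of negative-twisting tight structures from above by convex decomposition, and to match that bound from below with pairwise distinct Stein fillable structures from Legendrian surgery.

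\emph{Upper bound.} Let $\xi$ be negative-twisting with maximal twisting number $-t<0$. Take a Legendrian regular fibre $K$ with twisting $-t$. Following Ghiggini and Massot, we remove standard neighbourhoods of $n$ Legendrian vertical curves isotopic to the singular fibres. The complement is $\Sigma\times S^1$, where $\Sigma$ is a pair of pants for $n=3$ and a sphere with $n$ holes in general. Its boundary tori are convex with dividing curves of slope determined by $t$. After thickening to minimal twisting, each solid torus $V_i$ has boundary slope $-\frac{1}{r_i}$ shifted by $t$, so the number of tight structures on $V_i$ is a product of continued-fraction terms (Honda). The piece $\Sigma\times S^1$ with vertical Legendrian curves of twisting $-t$ must carry a tight structure with no bypasses that would increase the twisting. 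Bypass considerations on vertical annuli force the relation $e_0+\sum\lceil\cdot\rceil$ between $t$ and the Seifert invariants to take a specific form.

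The key point is that negative-definiteness, $e(M)<0$, leaves exactly one admissible $t$. For example, for $e_0\le -n$ one expects $t=1$. In general I expect the admissible values to be the solutions of an inequality like $e_0+\sum_i \lfloor r_i t\rfloor/t\ge \ldots$, and negativity of $e(M)$ pins the solution down uniquely. The output is an explicit count of candidate structures, namely Equations~\eqref{eq:1}--\eqref{eq:2}. This step, showing that the numerical problem has a unique solution $t$, and hence that the maximal twisting is unique, is the main obstacle. I would first treat $e_0\le -n$ directly. I would then reduce the case $-n<e_0\le -1$ by analysing when $t$ fibres can be destabilised, using the Farey graph.

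\emph{Lower bound and identification.} Blow down $(-1)$-vertices of the standard graph repeatedly until none remain; since the graph is negative-definite this terminates. Realise every vertex with framing $m\le -2$ as a Legendrian unknot with $\tb=m+1$ or adjusted, and every possible rotation number. Legendrian surgery on these realisations gives Stein fillings diffeomorphic to the minimal resolution. The count of rotation-number choices equals the upper bound. Distinct choices give distinct $c_1(J)$. By Plamenevskaya or Lisca--Mati\'c, the structures are then distinguished by $c^+$, which also shows they are pairwise non-isotopic. Each of them is negative-twisting, because the regular fibre can be seen as a Legendrian push-off of the central unknot with negative twisting. Counting then shows that these are \emph{all} the negative-twisting tight structures.

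\emph{Final part.} Suppose $c^+(\xi)\ne0$ but $\xi$ has twisting number $0$. Zero-twisting structures on Seifert fibred spaces contain, after attaching bypasses, a Legendrian vertical curve with a standard neighbourhood giving Giroux torsion or a boundary-parallel $\pi$-twisting. Alternatively, by Lisca--Stipsicz-type arguments for $e(M)<0$, such a structure is obtained by a contact $(+1)$-surgery realisation. I would show its contact invariant vanishes by exhibiting $M$ with $\xi$ as a negative-twisting-impossible configuration: for $e_0+\sum r_i<0$, a zero-twisting vertical Legendrian yields an overtwisted disc or a transverse-push-off argument giving $c^+=0$ (the Ghiggini method). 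Hence $c^+(\xi)\ne0$ forces $\xi$ to be negative-twisting. Strong (hence weak, since $b_1=0$ for rational homology spheres) fillability implies $c^+\ne0$, and the claim about fillable structures follows.
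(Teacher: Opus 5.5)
There is a genuine gap in your final part, and it is exactly where the paper's new input lies. A zero-twisting Legendrian fibre gives neither an overtwisted disc, nor Giroux torsion, nor $c^+=0$ in general. For example, $S^3=M(-1;\frac12,\frac23)$ has the left-handed trefoil as regular fibre. Its maximal $\tb$ equals the fibration framing $-6$, so $\xist$ is zero-twisting, yet it is Stein fillable. That half torsion forces $c^+=0$ on these manifolds is a consequence of the theorem, not an input you can use. A contact $(+1)$-surgery presentation says nothing about vanishing either.

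So $e(M)<0$ must enter quantitatively, and your sketch does not say how. The paper does it in Heegaard Floer theory:
\begin{itemize}
\item The $\tau$-Bennequin inequality applied to $\mathcal K$ and to $-\mathcal K$, together with $\tau_\xi(-K)=\tau_{\overline\xi}(K)$, gives $\mathrm{TB}_\xi(K)\le\tau_\xi(K)+\tau_{\overline\xi}(K)-1$.
\item One chooses full-path classes $\gamma_a$ and $\gamma_b=\J\gamma_a$ pairing non-trivially with $\widehat c(\xi)$ and $\widehat c(\overline\xi)$.
\item One computes $\tau_\gamma(K)=\frac12\bigl(\frac{1}{-e(M)}+e_1\cdot V_\gamma\bigr)$ from initial vectors of full paths, as in Equation~\eqref{eq:tau}.
\end{itemize}
This yields $\mathrm{TB}_\xi(K)<\frac{1}{-e(M)}-\height[V_a]\le\frac{1}{-e(M)}$, i.e.\ $\mathrm{tw}(M,\xi,K)<0$.

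The same gap breaks your counting argument. Exhibiting one Legendrian realisation of the fibre with negative twisting bounds the \emph{maximal} twisting number from below, not from above. (For $e_0=-1$ the central vertex has been blown down anyway.) So you have not shown that the Legendrian surgeries are negative-twisting, and without that, equality of the two counts does not identify the two sets. In the paper this is closed by the chain Stein fillable $\Rightarrow c^+\neq0\Rightarrow$ negative-twisting.

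Two further steps are only asserted.
\begin{itemize}
\item Uniqueness of the twisting number needs the criterion of Proposition~\ref{prop:Paolo}: the $\frac{p_i}{q}$ are best upper approximations of the $r_i$ with $\sum_ip_i=-e_0q+n-2$. It then needs a short averaging argument. Two solutions $q<Q$ force $\frac{P_i-p_i}{Q-q}\le r_i$, and summing gives $\sum_ir_i\ge1$ when $e_0=-1$, contradicting $e(M)<0$; the case $e_0<-1$ is similar.
\item For $e_0=-1$ the complete blow-down is a torus link plus at most two unknots, not a plumbing of unknots. The matching count \eqref{eq:2} uses truncated continued fractions, because some basic slices of the singular-fibre tori are absorbed into the background $\beta(p,q-p,1,\dots,1;q)$. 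You also need the uniqueness of that background on $\Sigma_{0,n}\times S^1$.
\end{itemize}
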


The Stein domain obtained as the complete blow-down of the standard graph is obtained as follows: starting from the plumbing 4-manifold, at each step there is at most one unknot with framing $-1$, because the graph is negative-definite; hence, we continue to blow down $-1$-unknots until the resulting Kirby presentation contains none of these. The subgraph made by the vertices that disappear during the procedure corresponds to the negative-definite standard graph of a Seifert fibration of a knot complement in $S^3$, that is, the fibration of $T_{p,q}$ for $1\leq p\leq q$ coprime, see \cite{Saveliev}. The resulting rational surgery diagram consists of a torus link $T_{(n-2)p,(n-2)q}$, and possibly two unknots which are cores of the complementary solid tori with respect to the torus supporting the link; we compute the coefficients explicitly in Subsection \ref{subsection:classification}. An example is illustrated in Figure \ref{Picture}. 

\begin{figure}[ht]  \begin{tikzpicture}[scale=0.9]
    \tkzDefPoints{0/0/A, 1.5/1/B, 1.5/0/C, 1.5/-1/D, 3/1/E, 3/0/F} 
    \tkzDrawSegment(A,B)\tkzDrawSegment(A,C)\tkzDrawSegment(A,D) \tkzDrawSegment(B,E)\tkzDrawSegment(C,F)
    \tkzDrawPoints[fill,gray,size=5](A,B,C)\tkzDrawPoints[fill,black,size=5](D)\tkzDrawPoints[fill,blue!65!black,size=5](E)\tkzDrawPoints[fill,red!61!black,size=5](F)
     \tkzLabelPoint[below left](A){$-1$} \tkzLabelPoint[above right](B){$-2$}\tkzLabelPoint[below right](C){$-3$}\tkzLabelPoint[below right](D){$-19$}\tkzLabelPoint[above right](E){$-4$}\tkzLabelPoint[below right](F){$-3$}
 \end{tikzpicture}\hspace{2.5cm}
 \def\svgwidth{0.55\textwidth}
\begingroup%
  \makeatletter%
  \providecommand\color[2][]{%
    \errmessage{(Inkscape) Color is used for the text in Inkscape, but the package 'color.sty' is not loaded}%
    \renewcommand\color[2][]{}%
  }%
  \providecommand\transparent[1]{%
    \errmessage{(Inkscape) Transparency is used (non-zero) for the text in Inkscape, but the package 'transparent.sty' is not loaded}%
    \renewcommand\transparent[1]{}%
  }%
  \providecommand\rotatebox[2]{#2}%
  \newcommand*\fsize{\dimexpr\f@size pt\relax}%
  \newcommand*\lineheight[1]{\fontsize{\fsize}{#1\fsize}\selectfont}%
  \ifx\svgwidth\undefined%
    \setlength{\unitlength}{2000.73146069bp}%
    \ifx\svgscale\undefined%
      \relax%
    \else%
      \setlength{\unitlength}{\unitlength * \real{\svgscale}}%
    \fi%
  \else%
    \setlength{\unitlength}{\svgwidth}%
  \fi%
  \global\let\svgwidth\undefined%
  \global\let\svgscale\undefined%
  \makeatother%
  \begin{picture}(1,0.45156009)%
    \lineheight{1}%
    \setlength\tabcolsep{0pt}%
    \put(0,0){\includegraphics[width=\unitlength,page=1]{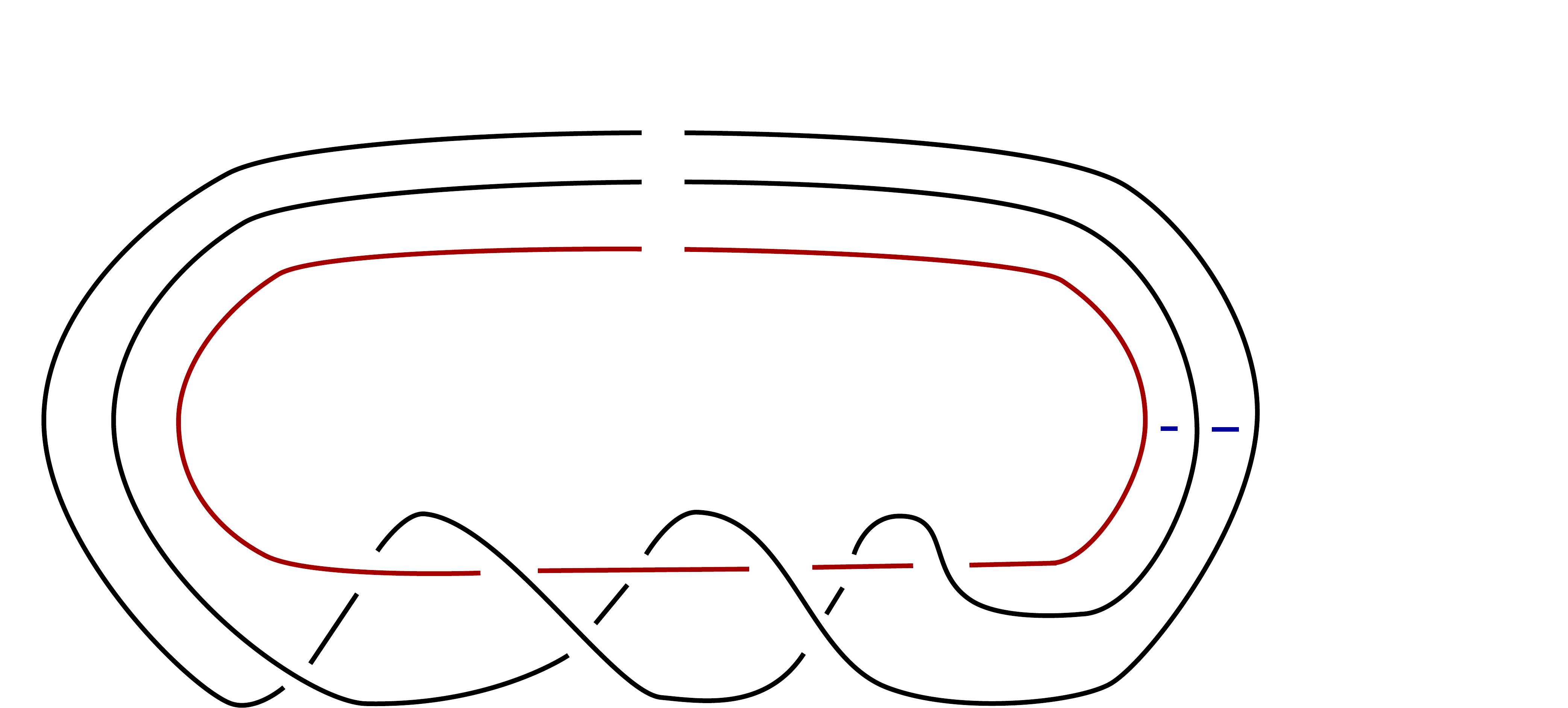}}%
    \put(0.6276221,0.40471469){\color[rgb]{0,0,0}\makebox(0,0)[lt]{\lineheight{1.25}\smash{\begin{tabular}[t]{l}$\textcolor[rgb]{0,0,0.61}{-2}$\end{tabular}}}}%
    \put(0.2305347,0.24717789){\color[rgb]{0,0,0}\makebox(0,0)[lt]{\lineheight{1.25}\smash{\begin{tabular}[t]{l}$\textcolor[rgb]{0.65,0,0}{-2}$\end{tabular}}}}%
    \put(-0.00151481,0.32899465){\color[rgb]{0,0,0}\makebox(0,0)[lt]{\lineheight{1.25}\smash{\begin{tabular}[t]{l}$-13$\end{tabular}}}}%
    \put(0,0){\includegraphics[width=\unitlength,page=2]{Picture.pdf}}%
  \end{picture}%
\endgroup%

     \caption{\smaller[1]{The standard graph of the Brieskorn sphere $\Sigma(7,8,19)$ (left), and its complete blow-down (right). The gray vertices disappear in the procedure; in addition, for the correct Legendrian surgery, we need the trefoil component to be stabilised thirteen times. The manifold then carries 14 fillable contact structures, up to isotopy.}}
     \label{Picture}
 \end{figure}    

Out of classification we read that (in our case) all negative-twisting tight structures are (Stein) fillable. Furthermore, it is possible to show that when $e_0<-1$ and $n=2,3$ our theorem actually covers every possible tight contact structure. Note that when $c^+$ is non-vanishing the contact structure has no half convex Giroux torsion, because otherwise it would be zero-twisting; this means that for negative-definite Seifert fibred spaces half convex torsion obstructs fillability. 

Note that for Seifert fibred spaces with $e_0\leq-n$, which are all $L$-spaces, such a classification result has generally been expected; in fact, with such a restriction the classification was previously established for $n=3,4,$ and it has been in the case of $n=3$ extended also to $e_0=-2$ with negative-definite standard graph \cite{Wu, Shah, Medetogullari,G-,Tosun}. 
On the contrary, for $e_0=-1$ the uniqueness of the twisting number, when $M$ has negative-definite standard graph, takes us by surprise; in fact, in this case the only classification 
previously in the literature is for $M(-1;\frac{1}{2},\frac{1}{3},\frac{k}{6k+1})$ in \cite{MT2}, extended to negative surgeries on positive torus knots in an unpublished note\footnote{To date, Tosun's note only contains a proof that every fillable structure on a negative-definite $M(-1;r_1,r_2,r_3)$ is negative-twisting, and if $M\simeq S^3_{-r}(T_{p,q})$ for $r\in\Q_{>0}$ then it is obtained as described in Theorem \ref{teo:classification}; the techniques strictly depend on the explicit values of the Seifert coefficients of $M$. The note is available at \url{https://bpb-us-e2.wpmucdn.com/sites.ua.edu/dist/8/406/files/2025/09/FillableSurgeriesonTpq.pdf}.}. Note that $L$-spaces with $e_0=-1$, for which the complete classification is known \cite{Irena, GLS}, are never negative-definite; all tight contact structures on these manifolds are actually zero-twisting.

The classification is made possible because we show that there is only one negative twisting number $\text{tw}(M,\xi,K)$ for tight contact structures (on negative-definite Seifert fibred spaces), and we compute its value explicitly. Our understanding is based on the work of Ghiggini \cite{G-}, paired with some subtle arithmetic considerations using Farey sequences.

\subsection*{The height of a full path and the maximal twisting number} 
We introduce a Heegaard Floer interpretation of the maximal twisting number. To establish it, we extract a new invariant from the Ozsv\'ath-Szab\'o full path algorithm which we call the \emph{height}. We show that, for negative-definite Seifert fibred spaces, this invariant completely determines the twisting number of every negative-twisting tight contact structure.

The construction of the height is based on the Alexander grading. Given a characteristic vector $V\in\text{Char}(\Gamma)$, we define the Alexander relative $\mathbb Z$-grading $\mathcal F(V)$ with respect to a distinguished leaf $S_0$ of $\Gamma$, see Subsection \ref{subsection:Alexander}. In the language of lattice cohomology \cite{Nemethi0,Nemethi,Lattice2,Lattice1}, this grading provides a combinatorial counterpart of the Alexander filtration induced in Heegaard Floer homology by the knot represented by $S_0$.

Recall that N\'emethi identifies the cube-grading-zero summand of the lattice cohomology complex with the set of equivalence classes of characteristic vectors arising from the Ozsv\'ath-Szab\'o full path algorithm \cite{OSz-fullpath}; see Section \ref{section:two} for the necessary background. In Lemma \ref{lemA} we show that the grading $\mathcal F$ is generally not constant along a full path. However, it is monotone, and therefore induces a natural filtration on the set of full paths. 

More precisely, we say that one takes an $i$-step from a characteristic vector $V$ if $v_i=-m(i)$, where $m(i)$ denotes the (negative) framing of the $i$-th vertex of $\Gamma$. Assuming that the distinguished leaf $S_0$ is attached to the vertex labelled by $1$, we prove in Subsection \ref{subsection:Alexander} that the value of $\mathcal F$ changes only under $1$-steps. This observation leads to our invariant $\height[V]$ of a full path $[V]$, see Definition \ref{def}; it measures the maximal difference of the values of $\mathcal F$ attained along the full path, or equivalently the maximal number of consecutive $1$-steps that can be taken inside $[V]$.

Following the convention of \cite{BP}, we call $(m(1)+2,\ldots,m(|\Gamma|)+2)\in\text{Char}(\Gamma)$ the \emph{canonical vector}; see also \cite{Nemethi}. The significance of the height is that it admits a direct contact-geometric interpretation. Namely, if $V_{\text{can}}$ denotes the canonical vector, then the maximal twisting number is completely determined by the height of the full path containing $V_{\text{can}}$.

\begin{teo}\label{cor:tw}
If $M=M(e_0;r_1,\dots,r_n)$ with $n\geq2$ has a negative-definite graph, then all negative-twisting tight contact structures $\xi$ on $M$ have the same twisting number (of the corresponding regular fibre $K$): it is equal to  \[\emph{tw}(M,\xi,K)=-1-\height[V_{\emph{can}}]= \left\{\begin{aligned} 
&-1  \hspace{1.03cm}\text{ when }e_0<-1\\
&-1-\#  \hspace{0.24cm}\text{ when }e_0=-1 \end{aligned}\right.\]
where $\#$ is the integer given in Equation \eqref{eq:sharp}.
\end{teo}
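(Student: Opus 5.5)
The proof splits into two independent halves. The first is contact-geometric: show that every negative-twisting tight structure has twisting number $-1$ when $e_0<-1$, and $-1-\#$ when $e_0=-1$. The second is combinatorial: show that $\height[V_{\text{can}}]$ equals $0$, respectively $\#$.

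\textbf{Contact-geometric half.} Suppose $\xi$ is tight with $\text{tw}(M,\xi,K)=-t<0$, and let $K$ be a Legendrian regular fibre with twisting $-t$. Remove standard neighbourhoods of the singular fibres together with a neighbourhood of $K$, following Ghiggini and Massot. What remains is a pair of pants times $S^1$ (more generally, $\Sigma_{0,n+1}\times S^1$) with convex boundary. The dividing slopes on the boundary tori are determined by $t$ and the $r_i$.
\begin{itemize}
\item Tightness of each solid torus forces the attaching slope on the $i$-th torus to lie in a Farey interval determined by $r_i$.
\item Maximality of $-t$ rules out vertical bypasses. Hence the slopes cannot be thickened further, and we get inequalities of the form $\lceil t r_i\rceil$-type conditions.
\item Tightness of the pants piece (Honda's classification of $\Sigma\times S^1$ without vertical Legendrian curves of twisting zero) forces $\sum_i$ of the boundary slope contributions plus $e_0 t$ to satisfy a sharp relation.
\end{itemize}
This turns the existence of a negative twisting number $-t$ into the numerical problem
\[
\text{find } t\geq1 \text{ with } e_0 t+\sum_i \lceil t r_i\rceil \text{ (suitably corrected)} = -1 .
\]
Negative-definiteness, $e_0+\sum r_i<0$, then leaves exactly one admissible $t$.
\begin{itemize}
\item If $e_0\leq -2$, the Farey intervals give $t=1$, since any larger $t$ would produce a vertical bypass via the $e_0$ term.
\item If $e_0=-1$, $t$ is forced to be the first integer at which $\sum_i \lceil t r_i\rceil$ catches up with $t$ in the appropriate sense. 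I expect this to be precisely the quantity $\#$ from Equation \eqref{eq:sharp} (plus one).
\end{itemize}
Existence at that $t$ is supplied by the Legendrian surgery diagrams of Theorem \ref{teo:classification}. Each Legendrian realisation of the blown-down torus link visibly contains a regular fibre with twisting $-1-\#$: a Legendrian pushoff of the torus. So the value is realised, and uniqueness follows from the numerics.

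\textbf{Combinatorial half.} Here we use the structure of the Alexander grading. By Lemma \ref{lemA} and the subsequent discussion, $\mathcal F$ changes only under $1$-steps, so $\height[V_{\text{can}}]$ equals the maximal number of consecutive $1$-steps along the full path starting at $V_{\text{can}}=(m(i)+2)_i$. Take $S_0$ attached at the central vertex, and run the path explicitly.
\begin{itemize}
\item A $1$-step at the centre requires $v_1=-e_0$. Initially $v_1=e_0+2$, so this holds only when $e_0=-1$.
\item After the step, the neighbours' entries increase by $2$. This triggers cascades of steps down each leg, which are governed by the continued fractions of $r_i$.
\item One checks by induction that after $k$ rounds the central entry returns to $-e_0$ exactly when an inequality in $\lceil k r_i\rceil$ holds. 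This is the same Farey-type inequality as in the contact half.
\item For $e_0\leq-2$ no $1$-step occurs, so the height is $0$.
\end{itemize}

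The main obstacle is matching the two numerical conditions exactly. These are the Ghiggini–Massot slope inequalities, expressed via Farey neighbours of $r_i$, and the leg-cascade counts of the full path, expressed via negative continued fractions. My first approach would be to express both in terms of the floor and ceiling sums $\sum_i\lfloor k r_i\rfloor$, using the standard identity relating continued-fraction cascades on a leg to $\lceil k r_i\rceil$.
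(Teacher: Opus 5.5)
\textbf{There is a genuine gap: the step that carries the content of the statement is left unproved.}

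Your computation $\height[V_{\text{can}}]=0$ for $e_0<-1$ is fine. For $e_0=-1$, however, the two halves are never joined, and joining them is the whole content of the statement. You must show that the unique admissible twisting $-t$ and $\height[V_{\text{can}}]$ are governed by the same integer $\#$ of \eqref{eq:sharp}. You leave exactly this as ``I expect'' and ``the main obstacle'', and neither half, as written, produces $\#$.

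\emph{Contact side.} You would need the $q$ found by the Farey search of Proposition~\ref{prop:tw} to equal $d_1+d_2$. Here $d_1,d_2$ are the denominators of the truncations $[0,m^1_1,\dots,m^1_{\ell_1}]$ and $[0,m^2_1,\dots,m^2_{\ell_2}]$ selected by \eqref{eq:l}.

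\emph{Combinatorial side.} ``After $k$ rounds the central entry returns to $-e_0$ exactly when an inequality in $\lceil kr_i\rceil$ holds'' is not an argument. The picture of cascades down every leg is also wrong: steps only occur on the subgraph $\Gamma'_0$ representing $S^3$. Legs $i>2$ never step, since negative-definiteness gives $m^i_1<-1-\#$. The vertices $S^1_{\ell_1+1},S^2_{\ell_2+1}$ are raised but never reach the threshold.

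\emph{Existence.} Your existence step does not close the gap. Invoking Theorem~\ref{teo:classification} is circular, because Proposition~\ref{prop:previous} uses $q=1+\#$. Moreover, a Legendrian regular fibre with twisting $-1-\#$ only gives $\text{tw}\geq-1-\#$; the reverse inequality is precisely what is missing.

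\emph{Smaller points.} The constraint in Proposition~\ref{prop:Paolo} is $e_0q+p_1+\cdots+p_n=n-2$, with each $p_i/q$ a best upper approximation of $r_i$, not ``$=-1$''. Uniqueness of $t$ must use $e(M)<0$ already for $-n<e_0\leq-2$. For instance, for $M(-2;\frac23,\frac23,\frac23)$, where $e(M)=0$, the conditions hold with both $q=1$ and $q=4$. Your ``vertical bypass via the $e_0$ term'' never uses $e(M)<0$.

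\textbf{How the paper avoids this.} The paper never performs the Farey/continued-fraction matching; it only remarks that it could be done. It bridges the two sides with Heegaard Floer theory instead.
\begin{enumerate}
\item On the lens space $Y_0=M(e_0;r_1,r_2)$, the regular fibre has a Legendrian realisation $\mathcal K$ in the canonical Stein filling for which $K$ and $-K$ both bound symplectic surfaces. So the $\tau$-Bennequin bound is sharp for both orientations.
\item With Theorem~\ref{teo:inequality} this gives $\text{TB}=\tau_{\zeta_{\text{can}}}(K)+\tau_{\overline\zeta_{\text{can}}}(K)-1$.
\item Equation~\eqref{eq:tau} rewrites this identically as $\text{tw}(Y_0,\zeta,K)=-1-\height[W_{\s_{\text{can}}}]$. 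This supplies the missing upper bound and ties the twisting number to the height with no arithmetic.
\item Only afterwards is $\#$ computed, as the number of central steps on $\Gamma'_0$.
\item Proposition~\ref{prop:tw} (the twisting of $M$ equals that of $Y_0$) and Remark~\ref{rmk:tw} (the extra legs never step) transfer both sides from $Y_0$ to $M$.
\end{enumerate}
To make your route a proof, you must either supply such a bridge or actually prove $t=d_1+d_2=1+\height[V_{\text{can}}]$ arithmetically.
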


Apart from $L$-spaces, for general Seifert fibred spaces only some bounds on the twisting number have previously appeared in literature. For instance, Mark and Tosun claimed in \cite[Theorem 4.6]{MT} that $\text{tw}(\Sigma(a_1,...,a_n),\xi)>-\sqrt{a_1\cdots a_n}$ for any tight structure $\xi$ on a canonically oriented Brieskorn sphere.

\medskip
\subsection*{A bound for the maximal \texorpdfstring{$\mathbf{tb}$-number}{}} The proof of the claim in Theorem \ref{teo:classification} that non-vanishing of $c^+(\xi)$ implies that the maximal twisting number of $\xi$ is negative follows from this chain of inequalities:
\[\text{tw}(M,\xi,K)+\dfrac{1}{-e(M)}=\text{TB}_\xi(K)\leq\tau_\xi(K)+\tau_{\overline\xi}(K)-1<\dfrac{1}{-e(M)}\:,\]
where the second inequality is seen from the interpretation of $HF^-(M)$ through the full path algorithm of Ozsv\'ath and Szab\'o \cite{OSz-fullpath}, while the first one comes from Hedden's $\tau_\xi$-inequality established in \cite{Hedden}, applied to both $K$ and $-K$; in addition, the fraction is the fibration framing of the knot $K$. 

In order to show that $\tau_\xi(K)+\tau_{\overline\xi}(K)-1$ is an upper bound for the maximal $\tb$-number of $K$, we need the following refined\footnote{The proof of \cite[Theorem 1.2]{Hedden} given by Hedden works for the self-linking number but not for the $\tb$-number.} version of \cite[Theorem 1.2]{Hedden} which takes into account the action of the involution $\J$ on the knot Floer homology groups, when reversing the orientation of $K$, see Section \ref{section:three}.

\medskip
\begin{teo}
 \label{teo:inequality1}
 If $L\subset(Y,\xi)$ is a link in a rational homology contact three-sphere such that $\widehat c(\xi)\neq0$ then \[\tb_\xi(\mathcal L)\leq\emph{TB}_\xi(L)\leq\tau_\xi(L)+\tau_{\overline \xi}(L)-|L|\] for any Legendrian realisation $\mathcal L$ of $L$, where $\tau_\xi(L):=-\tau_{\widehat c(\xi)}(L)$. 
\end{teo}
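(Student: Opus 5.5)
The first inequality is immediate from the definition of $\text{TB}_\xi(L)$ as the maximum of $\tb_\xi$ over Legendrian realisations. The content lies in the second one. The plan is to run Hedden's argument for \cite[Theorem 1.2]{Hedden} twice, once for each orientation of $L$, and then combine the two outputs using the involution $\J$ on knot Floer homology.

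The reason this works is that $\tb$ does not see orientation while the self-linking number does. For a Legendrian realisation $\mathcal L$ we have
\[\self(\mathcal L_\pm)=\tb_\xi(\mathcal L)\mp\rot(\mathcal L),\]
where $\mathcal L_\pm$ are the positive and negative transverse push-offs of $\mathcal L$. The negative push-off of $\mathcal L$ is the positive push-off of $-\mathcal L$, that is, a transverse link in $\overline\xi$ with the orientation of $L$ reversed. Adding the two identities removes the rotation number:
\[2\tb_\xi(\mathcal L)=\self_\xi(\mathcal L_+)+\self_\xi(\mathcal L_-).\]

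The first key step is to establish the transverse bound in the form Hedden's proof actually gives it. Using the open book adapted to the transverse link, and the fact that the binding's filtration level detects $\widehat c(\xi)$, one gets
\[\self_\xi(T)\leq 2\tau_\xi(L)-|L|\]
for every transverse realisation $T$ of $L$ in $\xi$, with $\tau_\xi(L)=-\tau_{\widehat c(\xi)}(L)$. For links, and in a rational homology sphere, I will take this as the existing link/rational version of Hedden's inequality, with $|L|$ accounting for the Euler characteristic correction.

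The second key step is to apply the same bound to $\mathcal L_-$, viewed in $\overline\xi$ as a push-off of $-L$. This gives
\[\self(\mathcal L_-)\leq 2\tau_{\overline\xi}(-L)-|L|.\]
The step I expect to be the main obstacle is identifying $\tau_{\overline\xi}(-L)$ with $\tau_{\overline\xi}(L)$, as the statement requires. Conjugation symmetry $\J$ of $\widehat{HFK}$ interchanges the filtrations of $L$ and $-L$ and sends $\widehat c(\xi)$ to $\widehat c(\overline\xi)$. I will check that $\J$ carries the filtration level at which $\widehat c(\xi)$ first survives for $-L$ to the corresponding level for $\widehat c(\overline\xi)$ and $L$. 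Care is needed here: in the rational homology sphere case the Alexander grading shifts under $\J$ by a term involving $[L]$ and the $\text{Spin}^c$ structure. I expect these shifts to cancel in the sum, because the two contact classes lie in conjugate $\text{Spin}^c$ structures.

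Finally, summing the two bounds and dividing by 2 gives
\[\tb_\xi(\mathcal L)\leq \tau_\xi(L)+\tau_{\overline\xi}(L)-|L|.\]
Taking the maximum over all Legendrian realisations $\mathcal L$ then yields the bound on $\text{TB}_\xi(L)$.
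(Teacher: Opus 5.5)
Your overall plan is the paper's: sum the $\tau$-Bennequin bounds for the two transverse push-offs of $\mathcal L$, then use conjugation to rewrite the orientation-reversed term. However, your second key step applies the conjugation twice, and the equality you single out as the main obstacle is false.

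As an oriented transverse link, $\mathcal L_-$ is one of two things: a positively transverse realisation of $-L$ in $(Y,\xi)$, or a positively transverse realisation of $L$ in $(Y,\overline\xi)$. Reversing both the link orientation and the coorientation of the planes gives back a negatively transverse link. So $\mathcal L_-$ is not a transverse realisation of $-L$ in $\overline\xi$. Proposition \ref{prop:tau} therefore gives $\tb_\xi(\mathcal L)+\rot_\xi(\mathcal L)\leq 2\tau_\xi(-L)-|L|$, not $2\tau_{\overline\xi}(-L)-|L|$.

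The conjugate-diagram identification you describe sends $(\widehat c(\xi),-L)$ to $(\widehat c(\overline\xi),L)$. It therefore proves $\tau_\xi(-L)=\tau_{\overline\xi}(L)$, and, applied to $\overline\xi$, also $\tau_{\overline\xi}(-L)=\tau_\xi(L)$. Hence the equality $\tau_{\overline\xi}(-L)=\tau_{\overline\xi}(L)$ your argument needs is really $\tau_\xi(L)=\tau_{\overline\xi}(L)$. No argument with $\J$ can give it, because $\J$ never reverses $L$ without also exchanging $\widehat c(\xi)$ and $\widehat c(\overline\xi)$.

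This equality fails in general. It would collapse the theorem to $\tb_\xi(\mathcal L)\leq 2\tau_\xi(L)-|L|$, which is exactly the imprecise form of Hedden's statement that this result corrects. Also, in the proof of Theorem \ref{teo:tw} the two sharp Bennequin equalities give $\tau_{\overline\zeta_{\text{can}}}(K)-\tau_{\zeta_{\text{can}}}(K)=\rot_{\zeta_{\text{can}}}(\mathcal K)$, so the two agree only when that rotation number vanishes. The check you actually describe ($\widehat c(\xi)$ with $-L$ versus $\widehat c(\overline\xi)$ with $L$) is the right identity; it just does not match the bound you wrote down.

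The repair is the paper's argument. Keep $\tb_\xi(\mathcal L)+\rot_\xi(\mathcal L)\leq 2\tau_\xi(-L)-|L|$. Conjugating the Heegaard diagram, $(\Sigma,\alpha,\beta,\mathbf w,\mathbf z)\mapsto(-\Sigma,\beta,\alpha,\mathbf w,\mathbf z)$, gives a canonical identification of filtered complexes $(\widehat{CF}(Y,\s_\xi),\mathcal F^{-L})\simeq(\widehat{CF}(Y,\overline{\s}_\xi),\mathcal F^{L})$. It carries $\widehat c(\xi)$ to $\J\widehat c(\xi)=\widehat c(\overline\xi)$, which yields $\tau_\xi(-L)=\tau_{\overline\xi}(L)$ exactly; adding the two bounds finishes the proof.

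Only one term in the sum is conjugated, so there is nothing that could cancel an Alexander-grading shift. The identification must hold term by term, and it does, so your "shifts cancel in the sum" expectation is neither needed nor sufficient.

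Equivalently, apply Proposition \ref{prop:tau} directly in $(Y,\overline\xi)$, which is legitimate since $\widehat c(\overline\xi)=\J\widehat c(\xi)\neq0$, to $\mathcal L_-$ oriented as $L$. Because $\tb_{\overline\xi}=\tb_\xi$ and $\rot_{\overline\xi}=-\rot_\xi$, this gives $\tb_\xi(\mathcal L)+\rot_\xi(\mathcal L)\leq 2\tau_{\overline\xi}(L)-|L|$ directly.
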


\subsection*{Obstructing contact-type embeddings of Brieskorn spheres} 
The classification theorem also has consequences for contact-type embedding problems. Using Theorem \ref{teo:classification} we can guarantee that a $\xi$ with self-conjugate contact invariant is itself self-conjugate; that is, $\xi$ is contact isotopic to its conjugate $\overline\xi$. 
Our obstruction then relies on proving Proposition \ref{prop:main}, which shows that if $\xi$ is a self-conjugate structure  among the ones in Theorem \ref{teo:classification} then $(M,\s_\xi)$ cannot have vanishing correction term, and this puts a strong restriction on the structure of the Heegaard Floer groups of the 3-manifold. 

\begin{prop}
 \label{prop:main} 
 Suppose that $M$ is a negative-definite Seifert fibred space different from $S^3$, and let $\s_0$ be a spin structure on $M$. Then every fillable contact structure $\xi$ on $M$ such that $\s_\xi=\s_0$ has either $c^+(\xi)\neq\Theta^+_{\s_0}$ or the Heegaard Floer correction term $d:=d(M,\s_0)$ is positive.
\end{prop}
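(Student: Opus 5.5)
The plan is to argue by contradiction. Assume $\xi$ is fillable with $\s_\xi=\s_0$ and $c^+(\xi)=\Theta^+_{\s_0}$; the goal is then to show $d(M,\s_0)>0$.

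\emph{Step 1: reduce to a self-conjugate Stein structure.} By Theorem~\ref{teo:classification}, $\xi$ is negative-twisting. It is therefore a Legendrian surgery on some Legendrian realisation $\mathcal L$ of the complete blow-down of the standard graph, with Stein filling $(W,J)$. Since $\s_0$ is spin, conjugation fixes $\s_0$. The bottom element $\Theta^+_{\s_0}$ of the tower is fixed by the conjugation involution $\J$ on $HF^+(-M,\s_0)$, and $\J(c^+(\xi))=c^+(\overline\xi)$. Hence $c^+(\overline\xi)=c^+(\xi)$. Because $c^+$ distinguishes negative-twisting structures up to isotopy (Theorem~\ref{teo:classification}), $\xi$ is isotopic to $\overline\xi$. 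This holds only up to the ambiguity in Stein realisations. So I would pass to the statement that the multiset of rotation numbers of $\mathcal L$ is symmetric under $r\mapsto -r$. Equivalently, $c_1(J)$ is, up to the automorphisms used in the classification, equal to $-c_1(J)$ on $H_2(W)$.

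\emph{Step 2: grading comparison.} The class $c^+(\xi)$ sits in degree $-d_3(\xi)$ (with the paper's normalisation), and $\Theta^+_{\s_0}$ sits in degree $d(-M,\s_0)=-d$. The Stein filling gives
\[ d_3(\xi)=\tfrac14\bigl(c_1(J)^2-3\sigma(W)-2\chi(W)\bigr). \]
Since $W$ is negative-definite, this reads $d=\tfrac14\bigl(c_1(J)^2+b_2(W)\bigr)$. This is consistent with the Ozsv\'ath--Szab\'o formula for negative-definite plumbings, which expresses $d$ as a maximum of $(K^2+b_2)/4$ over characteristic vectors. So the assumption $c^+=\Theta^+$ says precisely that $c_1(J)$ realises this maximum, and it remains to show $c_1(J)^2+b_2(W)>0$.

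\emph{Step 3: evaluate using self-conjugacy (main obstacle).} If every rotation number can be taken to be zero, then $c_1(J)=0$. In that case $d=b_2(W)/4>0$, because $M\neq S^3$ forces the blown-down diagram to be nonempty. The difficulty is parity. For the torus-link component stabilised $s$ times (as in Figure~\ref{Picture}), the rotation numbers have the parity of $s$, so they cannot all vanish. The symmetric choice then has rotation numbers $\pm1$, and $c_1(J)^2$ is a small negative number. To handle this I would compute $c_1(J)^2$ explicitly from the linking matrix of the complete blow-down described in Subsection~\ref{subsection:classification}. I would do this for the minimal symmetric rotation vector, which is the one forced by Step~1. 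The aim is to show $|c_1(J)^2|<b_2(W)$, using that the diagonal framings are large and negative (e.g.\ $-13$ in the example) compared with the entries of $c_1$.

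I expect the main difficulty to be this estimate in the cases $e_0=-1$ with very few vertices. There $b_2(W)$ may be as small as $1$ or $2$, so the inequality must come from the exact value of $1/(-e(M))$ rather than from a crude bound. If the estimate fails in some such case, the fallback is to show that no self-conjugate realisation exists there at all. That would make $c^+(\xi)=\Theta^+_{\s_0}$ impossible, which also yields the first alternative of the proposition.
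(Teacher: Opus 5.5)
Your Steps~1 and~2 follow the paper. Since $\s_0$ is spin, $c^+(\overline\xi)=\J\Theta^+_{\s_0}=\Theta^+_{\s_0}=c^+(\xi)$, and distinctness of $c^+$ in Theorem~\ref{teo:classification} makes $\xi$ self-conjugate. One then compares with $d_3$ of the negative-definite Stein filling. There is a minor slip in Step~2: with $\sigma(W)=-b_2(W)$ and $\chi(W)=1+b_2(W)$, your displayed formula gives $\tfrac14(c_1(J)^2+b_2(W)-2)$. You need the usual $+\tfrac12$ normalisation to reach $\tfrac14(c_1(J)^2+b_2(W))$, which is the paper's $d_3(\xi)$.

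The genuine gap is what you do with self-conjugacy. You weaken it to ``the multiset of rotation numbers is symmetric up to automorphisms''. As a result, Step~3 becomes an open-ended estimate $|c_1(J)^2|<b_2(W)$, which you neither prove nor know to hold in the small cases.

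The missing idea is that self-conjugacy pins down $c_1(J)$ exactly. Both $\xi$ and $\overline\xi$ are induced by Stein structures $J$ and $\overline J$ on the same smooth filling $W$, with $c_1(\overline J)=-c_1(J)$. By Plamenevskaya's theorem (the same input used in Theorem~\ref{teo:classification} to show that $c^+$ distinguishes the structures), Stein structures on a fixed $W$ with distinct $\Spin^c$ structures induce contact structures with distinct $c^+$. Hence $c^+(\xi)=c^+(\overline\xi)$ forces $\mathfrak s_J=\mathfrak s_{\overline J}$, that is, $2c_1(J)=0$ in $H^2(W;\Z)$. This group is torsion-free because $W$ is built from $2$-handles only, so $c_1(J)=0$ and every rotation number vanishes.

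This disposes of your parity case immediately. Since $c_1(J)$ is characteristic, an odd framing in the blown-down diagram forces an odd rotation number on that component. So if some framing is odd, no fillable structure on $M$ is self-conjugate, and necessarily $c^+(\xi)\neq\Theta^+_{\s_0}$. Your ``fallback'' is therefore the whole story in that case, but it needs exactly this argument. If all framings are even, $c_1(J)=0$ gives $d(M,\s_0)\geq b_2(W)/4>0$, since $M\neq S^3$; this is Lemma~\ref{lemma:main}.

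Finally, a rotation vector with entry $\pm1$ on a component is not invariant under conjugation. So the ``minimal symmetric'' Stein structure you propose to estimate never gives a self-conjugate $\xi$, and that estimate is beside the point.
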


We recall that $\Theta^+_{\s_0}\in HF^+(-M,\s_0)$ is the only non-zero element in $\Ker U\:\cap U^n\cdot HF^+(-M,\s_0)$ for every $n\geq0$. A weaker version of Proposition \ref{prop:main} was given by Karakurt in \cite[Theorem 1.4]{Karakurt} as an obstruction to the planarity of $\xi$. 

We say that a contact 3-manifold $(Y,\xi)$ admits a \emph{separating} contact-type embedding in a symplectic filling $(X,\omega)$ when $\omega\lvert_{\nu(Y)}$ coincides with the symplectisation of $(Y,\xi)$, and $Y$ disconnects $X$ into two pieces none of them being a symplectic cap of $(Y,\xi)$. In \cite{MT,MT3} Mark and Tosun study the problem of whether a Brieskorn sphere admits a separating contact-type embedding in a symplectic 4-manifold. 
Applying Proposition \ref{prop:main}, together with the obstruction from \cite[Theorem 2.1]{MT}, we obtain the following result, which in particular provides an alternative complete proof of \cite[Theorem 1.1]{MT}.

\begin{cor}
 \label{gompf} 
 No canonically oriented Brieskorn sphere $\Sigma(a_1,...,a_n)$ admits a separating contact-type embedding in any symplectic filling of a lens space. In particular, they do not arise as the boundary of any rationally convex domain in $\C^2$.
\end{cor}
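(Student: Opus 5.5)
The plan is to derive Corollary \ref{gompf} by combining the obstruction of Mark and Tosun \cite[Theorem 2.1]{MT} with Proposition \ref{prop:main}: the first produces a fillable structure with strong Heegaard Floer properties, and the second shows such a structure cannot exist on a Brieskorn sphere.

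\emph{Step 1: set-up.} Let $Y=\Sigma(a_1,\dots,a_n)$ with the canonical orientation, and suppose $(Y,\xi)$ has a separating contact-type embedding in a symplectic filling $(X,\omega)$ of a lens space $(L,\xi_L)$. Write $X=X_1\cup_Y W$. Here $X_1$ is the compact piece bounded by $Y$; since the embedding is of contact type, $(X_1,\omega|_{X_1})$ is a strong filling of $(Y,\xi)$, so $\xi$ is fillable. The other piece $W$ is a strong symplectic cobordism from $(Y,\xi)$ to $(L,\xi_L)$, and by the separating hypothesis it is not a cap.

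Since $Y$ is an integral homology sphere, it carries a unique spin$^c$ structure $\s_0$, which is spin, so automatically $\s_\xi=\s_0$. Moreover $Y\neq S^3$: all $a_i\geq2$ are pairwise coprime and $n\geq3$ in the canonically oriented, negative-definite setting, so $Y$ has non-trivial fundamental group. Hence $Y$ satisfies the hypotheses of Proposition \ref{prop:main}.

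\emph{Step 2: invoke Mark--Tosun.} I would apply \cite[Theorem 2.1]{MT} to the decomposition $X=X_1\cup_Y W$. The filling $X$ induces the map $F_{W}$ in $HF^+$ relating $c^+(\xi_L)$ and $c^+(\xi)$. Because $L$ is an $L$-space, $c^+(\xi_L)$ lies in the tower of $HF^+(-L)$. As I understand that obstruction, this forces:
\begin{itemize}
\item $c^+(\xi)\in U^k\cdot HF^+(-Y,\s_0)$ for every $k\geq0$ with $Uc^+(\xi)=0$, that is, $c^+(\xi)=\Theta^+_{\s_0}$;
\item a grading constraint giving $d(Y,\s_0)\leq0$. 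This comes from comparing the grading shift of $F_W$ with the correction term of the lens space (equivalently, $b_2^+(X)=0$ forces the relevant pieces to be negative definite).
\end{itemize}
The step I expect to be the main obstacle is extracting exactly these two conclusions from \cite[Theorem 2.1]{MT}, with consistent orientation conventions for $Y$ versus $-Y$ and for the sign of $d$. Pinning down the inequality on $d$ needs particular care. The lens space enters only through its being an $L$-space, which is why the same argument works for generalised $L$-spaces.

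\emph{Step 3: contradiction and the $\C^2$ statement.} By Theorem \ref{teo:classification}, $\xi$ is one of the Legendrian-surgery structures, so Proposition \ref{prop:main} applies to it. The proposition says that either $c^+(\xi)\neq\Theta^+_{\s_0}$ or $d(Y,\s_0)>0$. Both alternatives contradict Step 2, so no such embedding exists.

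For the last sentence of the corollary, suppose $\Sigma=\partial\Omega$ with $\Omega\subset\C^2$ rationally convex. By the characterisation of Nemirovski and Siegel, $\Omega$ admits a Kähler form that is symplectically convex near $\partial\Omega$ and standard at infinity. Cutting off with a large round ball $B^4$ then gives a separating contact-type embedding of $\Sigma$ in a symplectic filling of $(S^3,\xi_{std})=L(1,1)$. Neither side is a cap: $\Omega$ is the filling side, and the other side has two boundary components. This is excluded by the first part of the corollary, which recovers \cite[Theorem 1.1]{MT}.
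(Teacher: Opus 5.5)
\textbf{There is a gap: the bound $d(Y)\le 0$ in your Step 2 is not justified.} The rest of your architecture matches the paper's. You obtain $c^+(\xi)=\Theta^+_{\s_0}$ from the lens-space end via \cite[Theorem 2.1]{MT}, together with naturality of the contact class under strong cobordisms \cite[Theorem 1]{Etcheverria}. You then use a bound on the sign of $d(Y)$ and contradict Proposition \ref{prop:main}.

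The problem is the second bullet of Step 2. The inequality $d(Y)\le 0$ is not part of \cite[Theorem 2.1]{MT}. It also cannot be extracted from the degree shift of the map induced by $W$ on contact classes. That shift only transports the grading of $c^+(\xi_L)$ to the grading of $c^+(\xi)$, by additivity of $d_3$. So once $c^+(\xi)=\Theta^+_{\s_0}$, it only tells you that $d(Y)$ equals the normalised $d_3$-invariant of $\xi$, computed from the filling side. That number has no a priori sign. In fact Lemma \ref{lemma:main} is exactly the computation showing it is positive for the relevant $\xi$. So, as written, Step 3 has nothing to contradict.

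\textbf{The missing input is a separate topological argument, and it uses that the far end is a lens space rather than merely an $L$-space.} This is how the paper obtains it:
\begin{itemize}
\item Every symplectic filling of a lens space is negative-definite \cite{OSz-genus}, so $b_2^+(X)=0$.
\item Since $Y$ is an integral homology sphere, the intersection form of $X$ is the orthogonal sum of those of $X_1$ and $W$. Hence $W$ is negative-definite, because both of its boundary components are rational homology spheres.
\item Since $-L$ is again a lens space, it bounds a negative-definite filling $Z$. Then $W\cup_L Z$ is a negative-definite $4$-manifold with boundary $-Y$ and unimodular intersection form.
\item \cite[Corollary 9.8]{OSz-negative} then gives $d(-Y)\ge 0$, that is, $d(Y)\le 0$.
\end{itemize}
The paper also notes $d(Y)\ge 0$ from the negative-definite plumbing, hence $d(Y)=0$, but only the upper bound is needed. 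With this in place your Step 3 goes through.

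This also shows that your closing remark in Step 2 is wrong. For a generalised $L$-space end there is in general no such negative-definite cap available. That is why Theorem \ref{prop:convex} only yields $\dim_{\F}\widehat{HF}_d(M,\s_\xi)=1$ there, rather than non-existence.

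Your reduction of the $\C^2$ clause to a filling of $S^3$ by a large ball is fine.
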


Proposition \ref{prop:main} extends beyond Brieskorn spheres and yields a general Heegaard Floer obstruction to separating contact-type embeddings. 
Let $\F$ be the field with two elements. In the next statement we use the original definition of $L$-space given by Ozsv\'ath and Szab\'o; namely, we say that a 3-manifold $Y$ is a \emph{generalised $L$-space} if $HF_{\text{red}}(Y)=0$, or equivalently that the dimension of $\widehat{HF}(Y,\s)$ as an $\F$-vector space is one if $\s$ is torsion, and zero otherwise.

\begin{teo}
 \label{prop:convex}
 Suppose that $M$ is a negative-definite Seifert fibred space. 
 If $(M,\xi)$ admits a separating contact-type embedding in a strong symplectic filling $(X,\omega)$ of a generalised $L$-space, then $\dim_{\F}\widehat{HF}_d(M,\s_\xi)=1$. In particular, this happens when $(M,\xi)$ is the boundary of a rationally convex domain in a closed K\"ahler four-manifold $(\widehat X,\omega,J)$.
\end{teo}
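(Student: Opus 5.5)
The approach is to cut $X$ along $M$ and use the naturality of the contact invariant under the cobordism that results. Since $M$ separates $X$ and $(X,\omega)$ is a strong filling of the generalised $L$-space $Y$, cutting along $M$ gives a strong filling $(X_1,\omega)$ of $(M,\xi)$. It also gives a strong symplectic cobordism $W=X\setminus X_1$ from $(M,\xi)$ to $(Y,\xi_Y)$, which by assumption is not a cap. In particular $\xi$ is fillable, so by Theorem \ref{teo:classification} it is one of the Stein fillable structures obtained by Legendrian surgery on the complete blow-down, and $c^+(\xi)\neq0$. Moreover $b_1(M)=0$, and all Seifert fibred spaces here are rational homology spheres.

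The first key step is naturality. Ghiggini's (or Echeverria's) naturality for strong cobordisms gives $F_{\overline W}(c^+(\xi_Y))=c^+(\xi)$, where $\overline W$ is $W$ turned upside down and viewed as a map $HF^+(-Y)\to HF^+(-M)$. Since $Y$ is a generalised $L$-space, $HF^+(-Y,\s)$ is a single tower $\mathcal T^+$ in each spin$^c$ structure. A nonzero contact class on an $L$-space lies in $\Ker U$, so $c^+(\xi_Y)$ is the bottom element of a tower. Its image $c^+(\xi)$ is then a nonzero element of $\Ker U\cap\operatorname{Im} U^n$ for every $n$, because the tower element is $U^n$-divisible and the map is $U$-equivariant. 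For a rational homology sphere this forces $c^+(\xi)=\Theta^+_{\s_\xi}$, the bottom of the tower of $HF^+(-M,\s_\xi)$.

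The second step, which I expect to be the main obstacle, is to get from this to the statement about $\widehat{HF}_d$. I would use the structure of $HF^+(-M)$ for negative-definite plumbings. By Ozsv\'ath–Szab\'o and N\'emethi, $HF^+(-M,\s)=\mathcal T^+_{d}\oplus HF_{\mathrm{red}}$ with $HF_{\mathrm{red}}$ supported in even degrees at or above the bottom of the tower. The full-path description of Theorem \ref{cor:tw}'s setup says $c^+(\xi)$ is represented by the full path of the characteristic vector $c_1$ of the Stein structure. The element $\Theta^+$ being represented by this path means that no element of $HF_{\mathrm{red}}$ sits in the same grading $d$ at the bottom of the tower. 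I would then relate this to $\widehat{HF}$ through the exact triangle $\widehat{HF}\to HF^+\xrightarrow{U}HF^+$. The dimension of $\widehat{HF}_d(M,\s_\xi)$ equals the dimension of $\Ker U$ in degree $d$ on $-M$, up to the duality shift $d\leftrightarrow -d$. This is $1$ exactly when $\Theta^+$ is the only kernel element in that degree. The difficulty is ruling out reduced classes in degree $d$ that are also killed by $U$. To do this I would show that the full path containing the canonical-type vector $c_1(\xi)$ is the unique minimal-grading path representing $\Theta^+$. Any other generator of $\Ker U$ in that degree would give a second equivalence class of full paths at the same lattice weight, and the image of $F_{\overline W}$ restricted to the $L$-space tower cannot distinguish between them. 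I expect to argue this via the graded lattice cohomology: the tower image lands in the $U$-divisible part, which is one-dimensional in each degree.

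Finally, for the K\"ahler statement I would take the rationally convex domain $D$ in $(\widehat X,\omega,J)$ and remove a neighbourhood of a suitable complex curve in the complement. This gives a strong filling of a generalised $L$-space (a circle bundle link or lens space) in which $\partial D=M$ embeds separatingly. The argument above then applies.
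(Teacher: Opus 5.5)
Your first step is the paper's Lemma \ref{prop:convex1}: split $X$ along $M$, apply Echeverr\'ia's naturality under the concave piece, and use $U^n$-divisibility to get $c^+(\xi)=\Theta^+_{\s_\xi}$. Your reduction of the K\"ahler case also follows the paper. The gap is in the second step, where the deduction that actually proves the theorem is missing. The mechanism you propose is that the tower image lands in the $U$-divisible part, which is one-dimensional in each degree. That cannot exclude extra classes in degree $d$: $\Ker U\cap\bigcap_n\Imm U^n$ is one-dimensional for every rational homology sphere, including those with $\dim_\F\widehat{HF}_d>1$. Likewise, the sentence ``$\Theta^+$ being represented by this path means that no element of $HF_{\mathrm{red}}$ sits in grading $d$'' restates the goal rather than arguing for it.

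Two precise facts are needed, and you supply neither.

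\emph{First}, $c^+(\xi)$ must equal a \emph{single} dual basis element $T_{[V]}$, not merely pair nontrivially with $[V]$. If it were, say, $T_{[V]}+T_{[V']}$ with $M(V)=M(V')=d$, nothing would be contradicted. This is Proposition \ref{lemma:plus}, and it does not come from the twisting-number analysis behind Theorem \ref{cor:tw}. It uses Plamenevskaya's result that $J$ is the only $\Spin^c$ structure on $X_\Gamma$ whose cobordism map sends $c^+(\xi)$ to $1$, combined with the Bodn\'ar--Plamenevskaya strict transform. It applies to your $\xi$ because Theorem \ref{teo:classification} makes $\xi$ one of the Stein structures on the complete blow-down.

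\emph{Second}, you need $\Theta^+_{\s_\xi}=\sum_{M(W)=d}T_{[W]}$, the sum over all correctly ending full paths in the top degree. Every full path class is non-torsion in $HF^-(M)$, since it is $F^-_{P,\mathfrak u}(1)$. So in $HF^-_d(M,\s_\xi)$ each such $[W]$ is the tower generator plus a $U$-torsion class. Meanwhile $\Theta^+$, being $U^n$-divisible for all $n$, annihilates $U$-torsion under the $U$-equivariant duality pairing. Hence $\Theta^+$ evaluates to $1$ on every degree-$d$ full path, and to $0$ on lower ones by grading.

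With both facts in hand, $c^+(\xi)=\Theta^+$ forces exactly one full path in degree $d$. Since $HF^-(M,\s_\xi)$ vanishes above $d$, one has $(HF^-/U\cdot HF^-)_d=HF^-_d$. Theorem \ref{teo:correction_term} and Proposition \ref{prop:correction_term} then give $\dim_\F\widehat{HF}_d(M,\s_\xi)=1$. This is what the paper's one-line appeal to Theorem \ref{teo:classification} and Proposition \ref{lemma:plus} amounts to. Your remark that the tower ``cannot distinguish between'' two paths gestures at the second fact, but without the non-torsion input and the exact equality $c^+(\xi)=T_{[V]}$ the step is not proved.
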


Theorem \ref{prop:convex} holds for example in the case of $\widehat X=\C P^2$ with its unique symplectic structure. This gives evidence for a positive answer to \cite[Question 1.10]{MT3} when restricting to negative-definite Seifert fibred spaces.

\subsection*{Acknowledgements} {\smaller[1] We are grateful to Antonio Alfieri for all his comments and related discussions. We wish to thank: Matthew Hedden for a clarification about the $\tau$-Bennequin inequality, Francesco Lin for sharing his expertise in different Floer homology theories, and Ian Zemke for the useful conversations during the early stages of the project. We thank the Matematiska institutionen at Uppsala universitet for their friendly hospitality. A.C. has been partially supported by the HORIZON-ERC-2023-ADG 101141468 KnotSurf4d project.}

\section{The full path and the height function}
\label{section:two}
\subsection{The full path algorithm}
Throughout the paper we assume that the reader is familiar with the basics of Heegaard Floer homology \cite{OSz-involution}. Since we are mainly interested in Seifert fibred spaces,  we only work with the Heegaard Floer homology of graph manifolds. The Heegaard Floer groups of these 3-manifolds have been studied in depth \cite{OSz-fullpath, Nemethi,DM,Lattice2}, and were fruitfully applied in 3-dimensional contact  topology \cite{LS-existance,Karakurt,Irena,BP}. 

We now review the full path algorithm of Ozsv\' ath and Szab\' o \cite{OSz-fullpath}, on which our results mainly rely. We use the same  terminology, and conventions as in their paper.

Let $\Gamma$ be a negative-definite tree which can be turned into an $L$-space by lowering the framing of one vertex; such a graph is called \emph{almost-rational}, see \cite{Nemethi0,Nemethi}.
We write $P=X(\Gamma)$ for the corresponding plumbing 4-manifold, and $Y$ for its boundary rational homology sphere. Since $P$ is simply connected, one can identify a $\Spin^c$-structure $\mathfrak u\in\Spin^c(P)$ with its first Chern class $c_1(\mathfrak u)\in  H^2(P;\Z)$. Furthermore, since the spheres $S_1, \dots, S_{|\Gamma|}$ associated to the vertices of the plumbing $\Gamma$ form a basis of $H_2(P; \Z)$, we have a natural identification $H^2(P;\Z)\simeq \Z^{|\Gamma|}$. This allows us to represent the Chern class $c_1(\mathfrak u)$ of a $\Spin^c$-structure $\mathfrak u$ on the plumbing with a vector $V=(v_1,\dots , v_{|\Gamma|})$, where 
$v_i=\langle c_1(\mathfrak u) , [S_i] \rangle$.
Any vector $V\in \Z^{|\Gamma|}$ of this kind is called \emph{characteristic vector}, and is characterised by the property that  $v_i\equiv m(i) \text{ mod }2$, where $m(i)$ denotes the framing of the $i$-th vertex of $\Gamma$.

Since $P$ is a negative-definite 4-manifold with $b_1(P)=0$, it follows from the properties of the cobordism maps \cite[Proposition 9.4]{OSz-negative} that $F^-_{P,\mathfrak u}:HF^-(S^3)\rightarrow HF^-(Y)$ gives rise to a non-torsion element $F^-_{P,\mathfrak u}(1)\in HF^-(Y)$. In \cite{OSz-fullpath} Ozsv\'ath and Szab\'o show that each element $F^-_{P,\mathfrak u}(1)$ is identified with the equivalence class $[V]$, of the characteristic vector $V$ associated to $\mathfrak u$, determined by the \emph{full path} algorithm, which we now describe.

In what follows we denote by $Q$ the intersection form of $P$. In the basis  $\{[S_1],...,[S_{|\Gamma|}]\}$ one has that $Q_{ij}= [S_i]\cdot[S_j]=|S_i\pitchfork S_j|$. Rephrasing  \cite{OSz-fullpath} we say that a characteristic vector $V$: 
\begin{itemize}
 \item \emph{initiates} its full path if $m(i)+2\leq v_i\leq-m(i)$ for each $i=1,...,|\Gamma|$;
 \item \emph{terminates} its full path if $m(i)\leq v_i\leq-m(i)-2$ for each $i=1,...,|\Gamma|$;
 \item originates a \emph{step}  to $V'=V+2\text{PD}[S_j]$ if $v_j=-m(j)$ for some $j$;
 \item makes its full path \emph{drop out} if $|v_i|>-m(i)$ for some $i$.
\end{itemize}
Two vectors are in the same full path if there is a finite sequence of steps from one to the other.
When a full path contains both an initial and a terminal vector, and it does not drop out, we say that it \emph{ends correctly}.  

\begin{teo}[Ozsv\'ath-Szab\'o, N\'emethi]
   \label{teo:correction_term} 
 Let $Y$ be a three-manifold presented by an almost-rational graph. Then the correction term $d:=d(Y,\s)$ is the maximal Maslov grading of a non-zero homogeneous element in $HF^-(Y,\s)$. Furthermore, the equivalence classes of vectors (the full paths) generate $HF^-(Y)$ as an $\F[U]$-module; among these, the full paths that end correctly form a basis of $HF^-(Y)/U\cdot HF^-(Y)$ as an $\F$-vector space.
\end{teo}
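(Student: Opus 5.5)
The plan is to deduce the statement from the Ozsv\'ath--Szab\'o computation of the Heegaard Floer homology of plumbings \cite{OSz-fullpath}, together with N\'emethi's extension from graphs with at most one bad vertex to almost-rational graphs \cite{Nemethi0,Nemethi}. The first step is to recall their identification of $HF^+(-Y,\s)$ with the combinatorial module $\mathbb K^+(\Gamma,\s)$. This is the module of $\F[U]$-equivariant maps $\text{Char}_{\s}(\Gamma)\to\mathcal T^+$ satisfying the adjunction relations
\[U^{n}\phi(V+2\text{PD}[S_j])=\phi(V) \quad\text{or}\quad \phi(V+2\text{PD}[S_j])=U^{-n}\phi(V),\]
according to the sign of $2n=v_j+m(j)$. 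The identification is the map $\xi\mapsto\big(V\mapsto F^+_{P,\mathfrak u_V}(\xi)\big)$. Dualising, via the duality between $HF^+(-Y)$ and $HF^-(Y)$, this says that the elements $F^-_{P,\mathfrak u}(1)$ generate $HF^-(Y,\s)$ as an $\F[U]$-module. The relations among these generators are exactly those coming from the adjunction relations above.

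The second step is to interpret those relations in terms of full paths.
\begin{itemize}
\item A step $V\to V+2\text{PD}[S_j]$ with $v_j=-m(j)$ is the case $n=0$. There the adjunction relation says $F^-_{P,\mathfrak u_V}(1)=F^-_{P,\mathfrak u_{V'}}(1)$, so vectors in one full path give the same element.
\item If a path drops out, so that some $|v_i|>-m(i)$, the relation with $n\neq0$ shows its class is $U$ times another generator. Hence it vanishes in $HF^-/U\cdot HF^-$.
\item If a path ends correctly, it is not in $U\cdot HF^-$. Here I would use Ozsv\'ath--Szab\'o's argument that such paths are in bijection with the distinguished generators. They take the initial vector, apply the algorithm (reducing along $[S_j]$ whenever $v_j=-m(j)$) and show that it either terminates or drops out. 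Almost-rationality is exactly what makes this algorithm, and hence the resulting description of $\mathbb K^+$, valid; this is N\'emethi's contribution.
\end{itemize}
Together these give the basis statement for $HF^-(Y)/U\cdot HF^-(Y)$.

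The third step is the correction term. The grading of $F^-_{P,\mathfrak u}(1)$ is $\frac{c_1(\mathfrak u)^2+|\Gamma|}{4}$ plus the constant shift of $HF^-(S^3)$. These classes are non-torsion by \cite[Proposition 9.4]{OSz-negative}, and they generate the module. It follows that the top of the $U$-tower, which determines $d(Y,\s)$, is attained by the maximum of this quantity over $\text{Char}_\s(\Gamma)$. This is the known formula $d(Y,\s)=\max\frac{c_1^2+|\Gamma|}{4}$ for almost-rational graphs. It also yields the claim that $d$ is the maximal Maslov grading of a nonzero homogeneous element of $HF^-(Y,\s)$: every homogeneous element is a $U$-combination of generators, all of grading at most this maximum, and the reduced part lies in degrees no higher than the tower's top for these graphs.

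I expect the main obstacle to be this last point: showing that no reduced class sits above the tower. This requires N\'emethi's structure theorem, that lattice cohomology $\mathbb H^0$ is concentrated in the correct range and $\mathbb H^{q}=0$ for $q\geq1$ for almost-rational graphs. Since the paper only recalls the theorem, I would cite this rather than reprove it.
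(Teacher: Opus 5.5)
Your proposal is correct and follows essentially the same route as the paper. The paper cites Ozsv\'ath--Szab\'o and N\'emethi to identify $\Ker U\subset HF^+(-Y)$ with the dual of the span of the correctly ending full paths, then applies the duality $HF^+(-Y)\simeq HF^-(Y)^\bullet$; you unpack the same argument through $\mathbb K^+$.

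Three small corrections. First, the ``main obstacle'' in your last step is not one: once the non-torsion classes $F^-_{P,\mathfrak u}(1)$ generate $HF^-(Y,\s)$, every homogeneous element automatically has grading at most $\max M(V)=d$. Second, for the basis statement you need linear independence of the correctly ending paths modulo $U$, not just that each one lies outside $U\cdot HF^-$. This independence comes from the indicator functionals of those paths, which lie in $\Ker U\subset\mathbb K^+$. Third, those functionals exist for any negative-definite plumbing; almost-rationality enters only through the isomorphism $HF^+(-Y)\cong\mathbb K^+$, not through the combinatorics of the algorithm.
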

\begin{proof}
 Ozsv\'ath and Szab\'o \cite[Theorem 1.2 and Proposition 3.2]{OSz-fullpath} and N\'emethi \cite[Theorem 8.3]{Nemethi0} show that the kernel of the $U$-multiplication $\Ker U\subset HF^+(-Y)$ is dual to the vector space spanned by the full paths that end correctly. The fact that $\Ker U\simeq HF^-(Y)/U\cdot HF^-(Y)$ follows from the canonical duality $HF^+(-Y)\simeq HF^-(Y)^\bullet$ established in \cite[Proposition 2.5]{OSz-involution}, see also Subsection \ref{subsection:duality} below. 
\end{proof}

Take $\{[D_1],...,[D_{|\Gamma|}]\}$ the basis of $H_2(P,Y;\Z)$ given by the duals of the spheres $S_i$'s, and consider $\{e_1,...,e_{|\Gamma|}\}$ the canonical basis of $H^2(P;\Z)\simeq\Z^{|\Gamma|}$, identified through Poincar\'e duality with the previous one. Fix $\mathfrak u\in\Spin^c(P)$, with respect to these bases one has $c_1^2(\mathfrak u)[P]:=V^TQ^{-1}V$ for the characteristic vector $V$ corresponding to $c_1(\mathfrak u)$. Then by \cite[Equation (3)]{OSz-fullpath} it follows that the Maslov grading of $V$ is given by $M(V)=\frac{V^TQ^{-1}V+|\Gamma|}{4}$.

Let us recall that $\widehat{HF}^\text{ev}(Y,\s)\subset\widehat{HF}(Y,\s)$ is the subspace generated by the homology classes $\alpha$ such that $M(\alpha)-d$ is even.
 
\begin{prop}
 \label{prop:correction_term}
 Let $\psi_*$ be the map induced in homology by the projection of $CF^-(Y,\s)$ over the quotient complex $\widehat{CF}(Y,\s):=CF^-(Y,\s)/\:U=0$. When restricting to $\widehat{HF}^\emph{ev}(Y,\s)$ the map $HF^-_*(Y,\s)/U\cdot HF^-(Y)\rightarrow\widehat{HF}_*^\emph{ev}(Y,\s)$ is an isomorphism of $\F$-vector spaces. Moreover, the correction term $d$ is the maximal Maslov grading of a non-zero homogeneous element in $\widehat{HF}(Y,\s)$.
\end{prop}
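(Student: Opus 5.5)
We work with the short exact sequence of chain complexes
\[0\longrightarrow CF^-(Y,\s)\xrightarrow{\ U\ } CF^-(Y,\s)\xrightarrow{\ \psi\ }\widehat{CF}(Y,\s)\longrightarrow0\]
and its long exact sequence
\[\cdots\to HF^-_{*+2}(Y,\s)\xrightarrow{U}HF^-_*(Y,\s)\xrightarrow{\psi_*}\widehat{HF}_*(Y,\s)\xrightarrow{\delta}HF^-_{*+1}(Y,\s)\xrightarrow{U}\cdots\]
By exactness, $\psi_*$ induces an injection $HF^-(Y,\s)/U\cdot HF^-(Y,\s)\hookrightarrow\widehat{HF}(Y,\s)$. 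Its image is $\Ker\delta$, and the cokernel is $\Ker(U)\subset HF^-(Y,\s)$, shifted by one in grading.

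For an almost-rational graph, the structure results behind Theorem \ref{teo:correction_term} (Ozsv\'ath--Szab\'o, N\'emethi) describe $HF^-(Y,\s)$ as follows. It is supported in Maslov gradings of the same parity as $d$; more precisely, $HF^-(Y,\s)\simeq\F[U]_{(d)}\oplus HF_{\text{red}}$, with everything in degrees $\equiv d\pmod 2$. This parity claim follows from the lattice-cohomology description: all generators $[V]$ have Maslov grading $M(V)=\frac{V^TQ^{-1}V+|\Gamma|}{4}$, and these are congruent to $d$ mod $2$ inside a fixed $\Spin^c$ class, since $V'=V+2\text{PD}[S_j]$ changes $M$ by an even integer. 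I will state this as a consequence of the cited isomorphism of the graded module $HF^-$ with lattice cohomology $\mathbb H^0$, which is concentrated in even relative degree.

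Given the parity, the degrees split cleanly. The image of $\psi_*$ lies in degrees $\equiv d$, so it is contained in $\widehat{HF}^{\text{ev}}$. The image of $\delta$ lies in $HF^-_{*+1}$, which is nonzero only if $*+1\equiv d$; hence $\delta$ vanishes on $\widehat{HF}^{\text{ev}}$, and on the odd part $\psi_*$ is zero. Exactness then gives $\widehat{HF}^{\text{ev}}=\Ker\delta\cap\widehat{HF}^{\text{ev}}=\Imm\psi_*$. So $\psi_*$ restricts to an isomorphism $HF^-/U\cdot HF^-\to\widehat{HF}^{\text{ev}}$, which is the first claim.

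For the second claim, I compare gradings on both sides.
\begin{itemize}
\item The top-degree element of the tower $\F[U]_{(d)}$ is not divisible by $U$, so it survives in $HF^-/U$ and maps to a nonzero element of $\widehat{HF}_d$.
\item On $\widehat{HF}^{\text{ev}}$, gradings are those of $HF^-/U$, which by Theorem \ref{teo:correction_term} are all at most $d$.
\item On the odd part, every element is $\delta^{-1}$ of something in $\Ker U\subset HF^-_{*+1}$. Its degree is therefore $(\text{degree in }HF^-)-1\le d-1<d$.
\end{itemize}
Hence the maximal grading in $\widehat{HF}(Y,\s)$ is exactly $d$.

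The main obstacle is justifying the parity concentration of $HF^-(Y,\s)$, because the rest is formal homological algebra. For almost-rational graphs this is part of the cited results (\cite{OSz-fullpath}, \cite{Nemethi0}): $HF^-$ is generated by the full-path classes, which all have degree $\equiv d$. I will invoke it from there rather than reprove it.
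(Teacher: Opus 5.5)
Your proof is correct and is essentially the paper's argument. The paper likewise first obtains from Theorem \ref{teo:correction_term} that $HF^-(Y,\s)$ is concentrated in gradings $\equiv d \pmod 2$. For a class $\alpha=[a]\notin\Imm\psi_*$ it then uses the chain-level relation $\partial^- a=U^n b$, which is your connecting homomorphism $\delta$, to get $M(\beta)=M(\alpha)+2n-1$; this rules out $\alpha$ being even or having $M(\alpha)\geq d$. The differences are minor: you package this through the long exact sequence, you justify the parity via the grading formula $M(V)$ rather than the non-torsionness of full-path classes, and you make explicit, via the top of the tower, that grading $d$ is actually attained in $\widehat{HF}(Y,\s)$.
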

\begin{proof}
 As we mentioned above the homology classes corresponding to characteristic vectors are non-torsion with respect to the $\F[U]$-module structure; hence, by Theorem \ref{teo:correction_term} we see that $HF^-(Y,\s)$ is supported in Maslov gradings with the same parity of $d$. From the first homomorphism theorem, we just need to show that $\Imm\psi_*=\widehat{HF}^\text{ev}_*(Y,\s)$; this and the second claim in the statement both follow from this fact: every non-zero class $\alpha\in\widehat{HF}(Y,\s)$ such that either $M(\alpha)\geq d$ or $\alpha\in\widehat{HF}^\text{ev}(Y,\s)$ is in $\Imm\psi_*$.

 We prove the latter claim. Suppose that $\widehat{HF}(Y,\s)\ni\alpha=[a]\notin\Imm\psi_*$, then from standard Heegaard Floer properties, see \cite[Lemma 2.1]{AC} for a detailed discussion, there is a non-zero class $\beta=[b]\in HF^-(Y,\s)$ such that $\partial^-a=U^nb$ for some $n\geq1$; hence, we have $M(\alpha)-1=M(\beta)-2n$ which implies $M(\beta)=M(\alpha)+2n-1$. Since $\beta$ has the same parity of $d$, the same cannot hold true for $\alpha$; moreover, if $M(\alpha)\geq d$ then $M(\beta)>d$, but this contradicts the maximality of $d$ in $HF^-(Y,\s)$.
\end{proof}

To summarise, if $Y$  can be represented by an almost-rational graph (for example a Brieskorn sphere, or more generally a Seifert fibred space) then one can find a canonical basis for the Heegaard Floer homology group $\widehat{HF}^\text{ev}(Y,\s)$. Indeed, one can search for a collection of characteristic vectors $\{V_1,...,V_t\}$ restricting to the $\Spin^c$-structure $\s$, and find all possible full paths using the algorithm of Ozsv\' ath and Szab\' o. It follows from Theorem \ref{teo:correction_term} and Proposition \ref{prop:correction_term} above that $\{\gamma_i=\psi_*[V_i]: i=1, \dots , t \}$ forms a basis of $\widehat{HF}^\text{ev}(Y,\s)$ as an $\F$-vector space.

\subsection{The conjugation involution}
In what follows we exploit a certain symmetry of the Heegaard Floer chain complex. We recall that there is a map $\mathcal J: HF^\circ(Y) \to HF^\circ(Y)$ obtained by canonically identifying the intersection points of a given Heegaard diagram $(\Sigma, \alpha, \beta)$ of $Y$ to the ones of the conjugate diagram $(-\Sigma,  \beta, \alpha)$, where the orientation of the Heegaard surface is flipped, and the role of the $\alpha$- and the $\beta$-curves swapped \cite[Theorem 2.4]{OSz-involution}. Note that $\mathcal J$ preserves the Maslov grading, squares to the identity, and maps $HF^\circ(Y, \s)$ to  $HF^\circ(Y, \overline{\s})$.

It was noted by Hendricks and Manolescu \cite{HM} that, composing the latter identification with the chain map induced by a sequence of Heegaard moves connecting the two Heegaard diagrams, one obtains an involution denoted by $\iota$ on the chain complex itself. They showed that $\iota$ induces $\J$ in homology; in other words, one has $\iota_*=\J$. In this paper we work with the involution $\J$, as we are only interested in the behaviour at the homology level.   

One can compute $HF^-(Y,\s) $ using the Ozsv\' ath and Szab\' o full path algorithm; indeed, in this set-up the conjugation involution $\mathcal J$ acts on full paths by negating characteristic vectors, that is $\mathcal{J}[V]=[-V]$ for any $V$, as shown by Dai and Manolescu \cite[Theorem 3.1]{DM}, for manifolds presented by an almost-rational graph.
The involution $\mathcal J$ was instrumental in Ghiggini's identification of strongly fillable contact manifolds without Stein fillings \cite{G-notStein}. 

\subsection{The Alexander filtration of a leaf}
\label{subsection:Alexander}
We consider links that can be represented by meridians in a surgery description of a graph manifolds as shown in Figure \ref{graph} (right). These links are represented by some unlabelled vertices of the plumbing tree as in Figure \ref{graph} (left).  Links of this kind are called \emph{graph links} in some literature, and are the generalisation of algebraic links from singularity theory. We use the convention that vertices are enumerated so that the ones linked to the unlabelled vertices (which represent a graph link) appear before others.

\begin{figure}[ht] \begin{tikzpicture}[scale=0.6]
    \tkzDefPoints{0/1/A, 2/2/B, 2/0/C, -2/1/D} \tkzDefPoints{-1.8/0.9/X, -1.8/1.1/Y}
    \tkzDrawSegment(A,B)\tkzDrawSegment(A,C)\tkzDrawSegment(A,D)\tkzDrawSegment(D,X)\tkzDrawSegment(Y,D)
    \tkzDrawPoints[fill,black,size=5](A,B,C)
     \tkzLabelPoint[below left](A){$-1$} \tkzLabelPoint[right](B){$-2$}\tkzLabelPoint[right](C){$-3$} 
     \tkzLabelPoint[above](D){$K$}     
\end{tikzpicture}
\hspace{2cm} 
  \def\svgwidth{0.25\textwidth}
\begingroup%
  \makeatletter%
  \providecommand\color[2][]{%
    \errmessage{(Inkscape) Color is used for the text in Inkscape, but the package 'color.sty' is not loaded}%
    \renewcommand\color[2][]{}%
  }%
  \providecommand\transparent[1]{%
    \errmessage{(Inkscape) Transparency is used (non-zero) for the text in Inkscape, but the package 'transparent.sty' is not loaded}%
    \renewcommand\transparent[1]{}%
  }%
  \providecommand\rotatebox[2]{#2}%
  \newcommand*\fsize{\dimexpr\f@size pt\relax}%
  \newcommand*\lineheight[1]{\fontsize{\fsize}{#1\fsize}\selectfont}%
  \ifx\svgwidth\undefined%
    \setlength{\unitlength}{298.2961964bp}%
    \ifx\svgscale\undefined%
      \relax%
    \else%
      \setlength{\unitlength}{\unitlength * \real{\svgscale}}%
    \fi%
  \else%
    \setlength{\unitlength}{\svgwidth}%
  \fi%
  \global\let\svgwidth\undefined%
  \global\let\svgscale\undefined%
  \makeatother%
  \begin{picture}(1,0.73749185)%
    \lineheight{1}%
    \setlength\tabcolsep{0pt}%
    \put(0,0){\includegraphics[width=\unitlength,page=1]{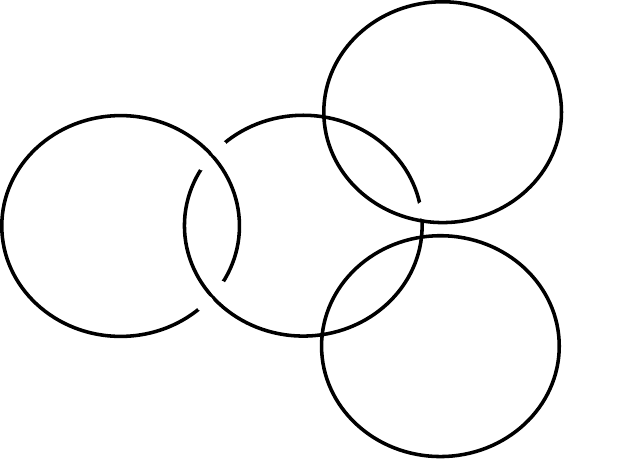}}%
    \put(0.00343043,0.5690166){\color[rgb]{0,0,0}\makebox(0,0)[lt]{\lineheight{1.25}\smash{\begin{tabular}[t]{l}$K$\end{tabular}}}}%
    \put(0.3608733,0.56984013){\color[rgb]{0,0,0}\makebox(0,0)[lt]{\lineheight{1.25}\smash{\begin{tabular}[t]{l}$-1$\end{tabular}}}}%
    \put(0.92268888,0.65131122){\color[rgb]{0,0,0}\makebox(0,0)[lt]{\lineheight{1.25}\smash{\begin{tabular}[t]{l}$-2$\end{tabular}}}}%
    \put(0.92148384,0.11379183){\color[rgb]{0,0,0}\makebox(0,0)[lt]{\lineheight{1.25}\smash{\begin{tabular}[t]{l}$-3$\end{tabular}}}}%
    \put(0,0){\includegraphics[width=\unitlength,page=2]{Graph.pdf}}%
  \end{picture}%
\endgroup%

     \caption{\smaller[1]{A negative-definite tree realising the positive trefoil in $S^3$.}}
     \label{graph}
\end{figure}     

Throughout the paper we assume that we have a knot $K$ as a leaf of the first vertex. Given a plumbing $P=X(\Gamma)$, and a graph knot $K\subset \partial P$ one can find  a properly embedded disk $D\subset P$ bounded by $K$, and intersecting transversely in one point the sphere $S_1$ associated to the (first) vertex of $\Gamma$ which it is attached to. 
Take $\{[D_1],...,[D_{|\Gamma|}]\}$ the basis of $H_2(P,Y;\Z)$ given by the duals of the spheres $S_i$'s: we write $[D]\cdot[D]$ for its self-intersection, determined by the rational number $[D_i]\cdot[D_j]:=|D_i\pitchfork\widehat D_j|=Q^{-1}_{ij}$ for any $i,j$ where $\widehat D_j$ is obtained by capping off $D_j$ with a (possibly rational) Seifert surface; moreover, fixed $\mathfrak u\in\Spin^c(P)$, we write $c_1(\mathfrak u)[D]$ for the evaluation of its relative homology class. Take $\{e_1,...,e_{|\Gamma|}\}$ the canonical basis of $\Z^{|\Gamma|}$, identified through Poincar\'e duality with the one of $H_2(P,Y;\Z)$ given by the $[D_i]$'s, and $V$ the characteristic vector corresponding to $c_1(\mathfrak u)$, we set $c_1(\mathfrak u)[D]:=[D]\cdot c_1(\mathfrak u)=[S_1]\cdot\text{PD}(c_1(\mathfrak u))=e_1^TQ^{-1}V$.
Following the convention in \cite{Alfieri,Lattice2,Lattice1} we  define the \emph{Alexander filtration} of $K$: 
\[\mathcal F(V)=-\dfrac{1}{2}\big([D]\cdot[D]-c_1(\mathfrak u)[D]\big)=\dfrac{1}{2}\big(-e_1^TQ^{-1}e_1+e_1^TQ^{-1}V\big)\:.\] 
In the subsequent sections  we write $V\cdot W$ in place of $V^TQ^{-1}W$ for the corresponding rational bilinear form on $\Z^{|\Gamma|}\simeq H^2(P;\Z)\simeq H_2(P,Y;\Z)$. We shall need the following two lemmas.

\begin{lemma}
\label{lemA}
 Let $Y$ be the three-manifold represented by an almost-rational graph $\Gamma$. Then the Alexander filtration is monotone along each full path, that is if $V^1, \dots, V^q$ are consecutive steps in a full path then $\mathcal{F}(V^1) \leq \dots \leq \mathcal{F}(V^q)$.
 More specifically, we have that \[\mathcal{F}(V^i) \leq \mathcal{F}(V^{i+1})  \leq \mathcal{F}(V^i)+1\] for  $i=1,...,q-1$.
\end{lemma}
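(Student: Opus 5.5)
The plan is to compute directly how $\mathcal F$ changes under a single step of the full path algorithm. A step takes $V$ to $V'=V+2\text{PD}[S_j]$ for some index $j$ with $v_j=-m(j)$.

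First I identify $\text{PD}[S_j]$ in the coordinates used for characteristic vectors. Since $v_i=\langle c_1,[S_i]\rangle$, the class $\text{PD}[S_j]$ corresponds to the vector whose $i$-th entry is $[S_j]\cdot[S_i]=Q_{ij}$. So $\text{PD}[S_j]=Qe_j$, and
\[\mathcal F(V')-\mathcal F(V)=\tfrac12 e_1^TQ^{-1}(2Qe_j)=e_1^Te_j=\delta_{1j}.\]
Hence a $j$-step with $j\neq1$ leaves $\mathcal F$ unchanged, and a $1$-step raises it by exactly $1$.

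This immediately gives $\mathcal F(V^i)\le\mathcal F(V^{i+1})\le\mathcal F(V^i)+1$ for consecutive steps taken in the forward direction. Monotonicity along a sequence $V^1,\dots,V^q$ then follows by chaining these inequalities.

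The only point needing care is what "consecutive steps in a full path" means, since a full path is defined as an equivalence class and could in principle be traversed backwards. The intended reading, as in Ozsv\'ath--Szab\'o, is that each $V^{i+1}$ is obtained from $V^i$ by a step $V^i\mapsto V^i+2\text{PD}[S_j]$ in the forward direction. With that reading there is no real obstacle.

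I would also check the sign convention: the identification of $c_1(\mathfrak u)[D]$ with $e_1^TQ^{-1}V$ must be compatible with $\text{PD}[S_j]\leftrightarrow Qe_j$ under the basis $\{e_i\}$ dual to the $[D_i]$'s. This holds because $[D_i]\cdot[S_k]=\delta_{ik}$.
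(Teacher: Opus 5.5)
Your proposal is correct and matches the paper's proof: both write a step as $V\mapsto V+2Qe_j$ and compute $\mathcal F(V+2Qe_j)-\mathcal F(V)=e_1^TQ^{-1}Qe_j=\delta_{1j}$. Chaining these increments gives both monotonicity and the bound of at most $1$ per step. Your remarks on the forward direction of steps and the identification $\text{PD}[S_j]\leftrightarrow Qe_j$ are the implicit conventions the paper uses as well.
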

\begin{proof} 
 Consider a step in a full path from $V$ to $W$; hence, by definition $W=V+2\text{PD}[S_i]=V+2Qe_i$, where $i$ is such that $v_i=-m(i)$. It follows that \[\begin{aligned}
    \mathcal F(W)-\mathcal F(V)&=\dfrac{1}{2}\left(e_1\cdot W-e_1\cdot V\right)=\\ &=\dfrac{1}{2}\left[e_1\cdot(V+2Qe_i)-e_1\cdot V\right]=\dfrac{1}{2}(2e_1^Te_i)=\left\{\begin{aligned}
        &1\hspace{1cm}\text{ if }i=1 \\ &0\hspace{1cm}\text{ if }i\neq1 
    \end{aligned}\right. \:.
 \end{aligned}\] 
 Therefore, we obtain $0\leq\mathcal F(W)-\mathcal F(V)\leq1$ which implies $\mathcal F(V)\leq\mathcal F(W)\leq\mathcal F(V)+1$.
\end{proof} 

It follows from Lemma \ref{lemA} that if $(Y,\s_0)$ is an integral homology sphere then for each full path $[W]$ there is an interval $[a_{[W]},b_{[W]}]$ such that every integer in it is realised by the Alexander filtration of some vector in $[W]$.

\begin{lemma}
\label{lemB} 
 Let $Y$ be the three-manifold represented by $\Gamma$, and assume the latter is an almost-rational graph where $K$ is linked to a bad vertex (if there is one). Say $V$ and $W$ are characteristic vectors that restrict to the same $\Spin^c$-structure on $Y$ and such that $\mathcal{F}(V) = \mathcal{F}(W)$; if their full paths $[V]$ and $[W]$ end correctly then $[V]=[W]$.
\end{lemma}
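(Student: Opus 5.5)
The plan is to translate the equality $\mathcal F(V)=\mathcal F(W)$ into a statement about the lattice, and then use Némethi's reduction to the bad vertex.

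\emph{The hyperplane.} Since $V$ and $W$ restrict to the same $\Spin^c$-structure on $Y$, we have $W=V+2Qx$ for some $x\in\Z^{|\Gamma|}$. Exactly as in the proof of Lemma \ref{lemA},
\[\mathcal F(W)-\mathcal F(V)=\tfrac12\, e_1^TQ^{-1}(2Qx)=x_1 .\]
So $\mathcal F(V)=\mathcal F(W)$ means precisely that $x$ lies in the hyperplane $H=\{x_1=0\}$. Moreover, every $i$-step with $i\neq1$ changes $x$ by $e_i$, which stays in $H$; only $1$-steps leave it. So it suffices to show the following: two correctly ending full paths, both meeting the affine slice $V+2Q H$, can be joined by steps.

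\emph{Reducing to a rational graph.} If $\Gamma$ has no bad vertex it is rational. Then $Y$ is an $L$-space, each $\Spin^c$-structure has exactly one correctly ending full path by Theorem \ref{teo:correction_term}, and the claim is immediate. Otherwise vertex $1$ is the bad vertex. Freezing the first coordinate amounts to working with the graph in which vertex $1$ is treated as fixed, equivalently with its framing lowered. By almost-rationality this graph is rational. Concretely, I would restrict to the Riemann–Roch weight $\chi_V(y)=-\tfrac12\big(y^TQy+V^Ty\big)$ on the slice $H$. Following Némethi's proof that $H^0$ of lattice cohomology of an almost-rational graph is computed by the generalized Laufer sequence $x(c)$, the restriction of $\chi_V$ to each slice has a unique minimal element. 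In addition, every lattice point of the slice is joined to that minimum by a path along which $\chi_V$ is non-increasing and each move is a step in the sense of the full path algorithm, that is, some coordinate $v_j$ equals $-m(j)$.

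\emph{Connecting the two full paths.} Take the initial vectors of $[V]$ and $[W]$, which both lie in the slice after translating along the full paths by non-$1$ steps. Run the Laufer-type reduction inside $H$ on each of them. The hope is that both reach the vector $V+2Qx(c)$ attached to the unique slice minimum, which would give $[V]=[W]$.

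\emph{Main obstacle.} The key step is to show that this Laufer-type reduction is realised by genuine full path steps and never leaves the full path. This is where the assumption that the paths end correctly is used: a drop-out ($|v_i|>-m(i)$) would be forced exactly when the reduction needs a move that is not a step. The correct-ending hypothesis rules this out. I would first try to verify it coordinate by coordinate, comparing the Laufer rule $(V+2Qy)_j>-m(j)$ (meaning: add $e_j$) with the step condition $v_j=-m(j)$, using that characteristic vectors have $v_j\equiv m(j)\pmod 2$.
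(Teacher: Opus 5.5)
Your first step is right and matches the paper: writing $W=V+2Qx$, the equality $\mathcal F(V)=\mathcal F(W)$ is exactly $x_1=0$, and non-central steps preserve this slice. The decisive step, however, is that two correctly ending full paths meeting the same slice are joined by steps. You leave this as a ``hope'', and the tools you propose for it do not work as stated.

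With $Z=V+2Qy$ one has $\chi_V(y+e_j)-\chi_V(y)=-\tfrac12(z_j+m(j))$. So a step ($z_j=-m(j)$) keeps $\chi_V$ constant, whereas your Laufer move ($z_j>-m(j)$) strictly lowers it and starts from a drop-out vector. This has three consequences:
\begin{enumerate}
\item A Laufer-type reduction is never realised by full path steps.
\item Your assertion that every lattice point of the slice is joined to the minimum by steps cannot hold, since it would put every point on the minimal level.
\item From a vector of a correctly ending path no Laufer move is even available. All the content of the lemma therefore sits in the unproved claim that such a path already lies on the minimal level of the slice, and that this level is connected by steps.
\end{enumerate}
The ``unique minimal element'' is also false: for a single $(-2)$-vertex and $V=(2)$ one gets $\chi_V(y)=y^2-y$, minimised at $y=0,1$, i.e.\ along the full path $\{(2),(-2)\}$. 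Finally, freezing $x_1$ corresponds to deleting vertex $1$, not to lowering its framing. On $\{y_1=0\}$ the weight $\chi_V$ is exactly the weight $\chi_{V'}$ of $\Gamma'=\Gamma\setminus\{1\}$, which is rational because it is a subgraph of the rational graph obtained by lowering that framing.

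The missing idea, used in the paper, is elementary. Push $V$ and $W$ along non-central steps only until $m(i)\le v_i,w_i\le -m(i)-2$ for all $i\geq2$; this is possible without dropping out because the paths end correctly. The pushed vectors still differ by $2Q\tilde x$ with $\tilde x_1=0$, so their truncations $V',W'$ satisfy $W'-V'=2Q'\tilde x'$. Hence they define the same $\Spin^c$-structure $\s'$ on the $L$-space $Y'$ presented by $\Gamma'$, and they are terminal vectors there. Since $\widehat{HF}(Y',\s')$ has rank one and terminal vectors are unique, $V'=W'$. Non-degeneracy of $Q'$ then gives $\tilde x'=0$, so the pushed vectors coincide.

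If you want to keep your lattice-theoretic route, the correct input is that for the rational graph $\Gamma'$ every sublevel set $\{\chi_{V'}\le n\}$ is connected. A non-dropping full path meets the slice in a set closed under adjacency inside its level set, because neighbours at the same level are reached by forward or backward steps. Connectedness then forces this intersection to be the whole minimal level set of the slice, for both $[V]$ and $[W]$.
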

\begin{proof} 
 Suppose that there are $V$ and $W\in\Z^{|\Gamma|}$ as in the statement. Then  $e_1\cdot(V-W)=0$ which yields to \[V-W\in (Q^{-1}e_1)^\perp=\text{Span}\{2Qe_2,...,2Qe_{|\Gamma|}\}\:.\]
 This is because $\dim(Q^{-1}e_1)^\perp=|\Gamma|-1$ and $V-W$ has even coordinates. Since $V$ and $W$ induce the same $\Spin^c$-structure on the boundary if and only if there exists a unique $A\in\Z^{|\Gamma|}$ such that $2QA=V-W$, we can now write \begin{equation} V=W+2\lambda_2Qe_2+...+2\lambda_{|\Gamma|}Qe_{|\Gamma|}
 \label{eq:lambda}
 \end{equation} where each $\lambda_i$ is an integer. We wish to show that $V$ and $W$ are connected by steps. Therefore, up to going further along the full path, but not crossing any central step, we may assume that $m(i)\leq v_i<-m(i)$ and $m(j)\leq w_j<-m(j)$ for every $i,j=2,...,|\Gamma|$. 
   
 Denote by $V'$ and $W'$ the vectors obtained by removing the first coordinate to $V$ and $W$. Since $V$ and $W$ end correctly the same is necessarily true for $V'$ and $W'$; moreover, we have that \[V'-W'=2\lambda_2Q'e_1+...+2\lambda_{|\Gamma|}Q'e_{|\Gamma|-1}\] where $Q'$ is the matrix gotten by removing the first row and column from $Q$. This clearly implies that the vector $B=(\lambda_2,...,\lambda_{|\Gamma|})$ satisfies $2Q'B=V'-W'$, and then $V'$ and $W'$ induce the same $\Spin^c$-structure $\s'$ on $Y'$, the manifold presented by the graph $\Gamma$ without the first vertex.

 Since $\Gamma$ is almost-rational, lowering the framing of the first vertex changes it to a rational graph, that is a negative-definite plumbing tree presenting an $L$-space \cite{Nemethi}. It is known that a subgraph of a rational graph is also rational; hence, we have that $Y'$ is itself an $L$-space. We conclude that $[V']=[W']$ because there is only one generator in $\widehat{HF}(Y',\s')$. In addition, we are assuming that $V'$ and $W'$ never reach the maximal coordinate, and then they are terminal vectors in their full path. It is proved in \cite[Proposition 3.2]{OSz-fullpath} that initial and terminal vectors are unique; therefore, we proved that $V'=W'$.

 In conclusion, since $Q'$ has non-zero determinant, we can now argue that $\lambda_2=\cdots=\lambda_{|\Gamma|}=0$ and then $V=W$.
\end{proof} 

The following is an immediate consequence of Lemmas \ref{lemA} and \ref{lemB}, and the uniqueness of initial and terminal vectors \cite[Proposition 3.2]{OSz-fullpath}. 
\begin{cor}
 Suppose that $Y$ is an integral homology sphere as in Lemma \ref{lemB}.
 Then for every full path $[V_i]$ which ends correctly there is an interval $[a_i,b_i]$ such that every integer in it is realised by the Alexander filtration of some vector in $[V_i]$. Running through all such full paths $[V_1],...,[V_t]$, the intervals are ordered so that $a_1\leq b_1<a_2\leq\:...\leq b_{t-1}<a_t\leq b_t$.
\end{cor}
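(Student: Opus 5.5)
The plan is to derive the statement from the two lemmas just proved, together with the uniqueness of initial and terminal vectors \cite[Proposition 3.2]{OSz-fullpath}.

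\textbf{The interval for a single full path.} Since $Y$ is an integral homology sphere, there is a single $\Spin^c$-structure, so every characteristic vector restricts to it. By Lemma \ref{lemA}, along any chain of consecutive steps $\mathcal F$ either stays constant or increases by exactly $1$, the latter only under $1$-steps. Moreover, $\mathcal F$ is integer-valued on characteristic vectors: $Q$ is unimodular, and $\mathcal F(V)=\frac12 e_1^TQ^{-1}(V-e_1')$ for an appropriate characteristic shift, so the parity condition makes it an integer; equivalently, $\mathcal F(V)-\mathcal F(W)$ is an integer for $V,W$ in the same full path by the step formula. For a full path $[V_i]$ that ends correctly, the steps run from its unique initial vector to its unique terminal vector. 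Set $a_i:=\mathcal F$ of the initial vector and $b_i:=\mathcal F$ of the terminal vector. Monotonicity, and the fact that any vector of the path can be reached from the initial one by forward steps, give $\mathcal F([V_i])\subseteq[a_i,b_i]$. The increments of $0$ or $1$ mean no integer in this range is skipped. I expect the fiddly point to be justifying that every vector of the class lies on a forward chain from the initial vector to the terminal one. For this I will use the structure of the Ozsv\'ath--Szab\'o algorithm: the vectors of a correctly ending path form a sequence starting at the initial vector. If that fails in general, monotonicity still bounds $\mathcal F$ between its values at the initial and terminal vectors along any chain through the vector.

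\textbf{Disjointness and ordering.} By Lemma \ref{lemB}, two correctly ending full paths containing vectors with equal $\mathcal F$ coincide. Hence for $i\neq j$ the integer sets $[a_i,b_i]\cap\Z$ and $[a_j,b_j]\cap\Z$ are disjoint. Since the endpoints are integers, disjoint integer intervals can be relabelled so that $a_1\leq b_1<a_2\leq\cdots<a_t\leq b_t$. This is done by sorting by $a_i$: if $a_i\le a_j$, disjointness forces $b_i<a_j$.

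The main care needed is in the first step, namely identifying the integer range of $\mathcal F$ on a path with a full interval. The second step is then immediate from Lemma \ref{lemB}.
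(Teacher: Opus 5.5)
Your argument is correct and uses exactly the ingredients the paper cites when it calls this an immediate consequence: Lemma \ref{lemA} for the $0$/$1$ increments along the path from the initial vector to the terminal vector, Lemma \ref{lemB} for disjointness of the realised integer sets, and uniqueness of initial and terminal vectors. The containment $\mathcal F([V_i])\subseteq[a_i,b_i]$ that you flag as the fiddly point is not actually used in your disjointness step, which only needs every integer of $[a_i,b_i]$ to be realised; if you want the interval to be the full range, take $a_i,b_i$ to be the minimum and maximum of $\mathcal F$ on the finite class, since a step in either direction changes $\mathcal F$ by $0$ or $\pm1$ (your fallback via ``any chain through the vector'' does not work as stated when the chain uses backward steps).
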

Finally, we introduce the height function that we will use in Sections \ref{section:four} and  \ref{section:five}. 
\begin{defin}[Height]
 \label{def}
       For any characteristic vector $W$ whose full path ends correctly, assume that $V$ is the initial vector of its full path $[W]$ while $V'$ is the initial vector of the full path of its conjugate $[-W]$; then we write the \emph{height} of the full path as \begin{equation}
\height[W]=\max_{Z\in[W]}\mathcal F(Z)-\mathcal F(V)=\mathcal F(-V')-\mathcal F(V)=-e_1\cdot\left(\dfrac{V+V'}{2}\right)
\label{eq:height}
\end{equation} as the difference between the maximal and the minimal value of the Alexander filtration, induced by the leaf $K$, on a given full path class.
\end{defin}
Note that $\height[W]$ is equal to the number of \emph{central steps}, that are steps from $W$ to $W+2Qe_1$, from the initial to the terminal vector in the full path $[W]$. 

\subsection{Duality}
\label{subsection:duality}
It is standard in Heegaard Floer theory to identify canonically $HF^+(-Y)$ with $HF^-(Y)^\bullet$, see \cite{AC} for details. Given $[V_1],...,[V_t]$ the homology classes corresponding to vectors whose full path ends correctly, then Ozsv\'ath and Szab\'o \cite{OSz-fullpath} define elements in $HF^+(-Y)$, when $Y$ is presented by an almost-rational graph, as functionals $T_{[V]}:HF^-(Y)\rightarrow\F$ such that $T_{[V]}[V_i]=1$ if $[V_i]=[V]$, and zero otherwise. The full path gives an equivalence relation also on these functionals in the sense that $T_{[V]}$ and $T_{[W]}$ are the same class in $HF^+(-Y)$ if and only if $[V]=[W]$. The following is one of the main results in \cite{OSz-fullpath}.
\begin{prop}[Ozsv\'ath-Szab\'o]
 The functionals $\{T_{[V_1]},...,T_{[V_t]}\}$ form a basis of $\Ker U\subset HF^+(-Y)$; moreover, the Maslov grading satisfies $M(T_{[V_i]})=-M(V_i)$ for any $i=1,...,t$.
\end{prop}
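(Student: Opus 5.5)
The statement follows from Theorem \ref{teo:correction_term} together with the canonical duality $HF^+(-Y)\simeq HF^-(Y)^\bullet$. I would first identify $\Ker U\subset HF^+(-Y)$ with the dual of $HF^-(Y)/U\cdot HF^-(Y)$. An element $\phi\in HF^-(Y)^\bullet$ satisfies $U\phi=0$ exactly when $\phi\circ U=0$, that is when $\phi$ vanishes on $U\cdot HF^-(Y)$. So $\Ker U\simeq\big(HF^-(Y)/U\cdot HF^-(Y)\big)^\bullet$, and this is the identification already used in the proof of Theorem \ref{teo:correction_term}.

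By Theorem \ref{teo:correction_term}, the classes of the correctly ending full paths $[V_1],\dots,[V_t]$ form a basis of $HF^-(Y)/U\cdot HF^-(Y)$. The functionals $T_{[V_i]}$ are defined by $T_{[V_i]}[V_j]=\delta_{ij}$. To read them as functionals on all of $HF^-(Y)$, I would use that the full paths generate $HF^-(Y)$ as an $\F[U]$-module. Write an arbitrary element as $\sum_j \epsilon_j[V_j]+U\cdot(\text{something})$, and let each $T_{[V_i]}$ kill $U\cdot HF^-(Y)$. This amounts to precomposing with the quotient map. Full paths that drop out either vanish or lie in $U\cdot HF^-(Y)$, so they are assigned zero, consistently with the definition. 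With this extension, $\{T_{[V_i]}\}$ is exactly the dual basis of $\{[V_i]\}$ under the isomorphism above. Hence it is a basis of $\Ker U$, and the equivalence $T_{[V]}=T_{[W]}\iff[V]=[W]$ is immediate.

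For the grading, the duality pairing $HF^+_k(-Y)\times HF^-_{-k}(Y)\to\F$ pairs degree $k$ with degree $-k$. The class $[V_i]$ is homogeneous of Maslov grading $M(V_i)=\frac{V_i^TQ^{-1}V_i+|\Gamma|}{4}$, and $T_{[V_i]}$ pairs nontrivially only with $[V_i]$ modulo $U$-images. Therefore $T_{[V_i]}$ is homogeneous of degree $-M(V_i)$. This requires noting that distinct basis elements of possibly different gradings do not mix, since the dual basis of a homogeneous basis is homogeneous.

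The main obstacle is the well-definedness of $T_{[V_i]}$ on the whole of $HF^-(Y)$. This rests on the identification of the full path classes with the elements $F^-_{P,\mathfrak u}(1)$, and on the fact that linear relations among them hold only modulo $U$. I would cite \cite[Theorem 1.2 and Proposition 3.2]{OSz-fullpath} for this rather than reprove it.
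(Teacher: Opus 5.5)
Your deduction is valid inside the paper. The duality $HF^+(-Y)\simeq HF^-(Y)^\bullet$ is $U$-equivariant, so $\Ker U$ is the graded dual of $HF^-(Y)/U\cdot HF^-(Y)$. The $T_{[V_i]}$ are then the dual basis of the homogeneous basis given by Theorem \ref{teo:correction_term}. With the paper's normalisation (top of $HF^-(S^3)$ in degree $0$, which is what makes $M(V)$ the grading of $F^-_{P,\mathfrak u}(1)$) the pairing has no shift, so $T_{[V_i]}$ sits in degree $-M(V_i)$.

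Your route is, however, the reverse of the paper's. The paper gives no proof of this proposition; it simply attributes it to \cite{OSz-fullpath}. Moreover, in proving Theorem \ref{teo:correction_term} it uses exactly this Ozsv\'ath--Szab\'o description of $\Ker U\subset HF^+(-Y)$ as the input, and deduces the basis of $HF^-(Y)/U\cdot HF^-(Y)$ from it by the same duality you invoke. So your argument is formally fine given the earlier theorem, but it is circular once traced to its sources and adds no independent content.

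The non-circular argument lives on the $HF^+$ side, under $HF^+(-Y)\cong\mathbb H^+(\Gamma)$. There the adjunction relations force an element of $\Ker U$ to be constant along steps and to vanish on any full path that drops out. Hence it is determined by its values on correctly ending full paths, which gives the basis; this is \cite[Proposition 3.2]{OSz-fullpath}. The grading then follows at once from the convention on $\mathbb H^+(\Gamma)$: $\phi$ has degree $\delta$ when each nonzero $\phi(K)$ has degree $\delta+\frac{K^TQ^{-1}K+|\Gamma|}{4}$, so $M(T_{[V]})=-M(V)$. Your closing paragraph effectively concedes that the well-definedness you flag is exactly this Ozsv\'ath--Szab\'o theorem, and that is the right diagnosis.
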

We have the following interpretation in terms of cobordism maps. Let $F^+_{\overline P,\mathfrak u}:HF^+(-Y)\rightarrow HF^+(S^3)$ be the map induced by the plumbing $P$ turned upside-down, and $\mathfrak u$ the $\Spin^c$-structure corresponding to $V$. Then we have \[\Ker F^+_{\overline P,\mathfrak u}=(\Imm F^-_{P,\mathfrak u})^\perp=[V]^\perp=\text{Span}\{T_{[W]}\:|\:[V]\neq[W]\}\:.\] Therefore, one has $F^+_{\overline P,\mathfrak u}(T_{[W]})=1$ if and only if $[W]=[V]$.
Say $d=d(Y,\s)$ is the Heegaard Floer correction term, and $\widehat{HF}^\text{ev}(-Y)\subset\widehat{HF}(-Y)$ consists of the classes whose Maslov grading has the same parity of $d$. We make the following observation.
\begin{prop}
 \label{prop:plus}
 The map $\rho^*:\widehat{HF}^\emph{ev}_*(-Y,\s)\rightarrow\Ker U\subset HF^+_*(-Y,\s)$ induced by the inclusion of $\widehat{CF}_*(-Y)$ into $CF^+_*(-Y)$ is an isomorphism of $\F$-vector spaces.
\end{prop}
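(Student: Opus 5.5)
The statement to prove is Proposition \ref{prop:plus}: the map $\rho^*:\widehat{HF}^{\text{ev}}_*(-Y,\s)\rightarrow\Ker U\subset HF^+_*(-Y,\s)$ is an isomorphism. I would dualise Proposition \ref{prop:correction_term}, or equivalently run the same argument directly on $-Y$.

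\textbf{Step 1: the inclusion lands in $\Ker U$.} The inclusion $\rho:\widehat{CF}(-Y)=\Ker(U:CF^+\to CF^+)\hookrightarrow CF^+(-Y)$ is the first map of the short exact sequence
\[0\rightarrow\widehat{CF}(-Y)\rightarrow CF^+(-Y)\xrightarrow{U}CF^+(-Y)\rightarrow0.\]
Its long exact sequence gives $\Imm\rho^*=\Ker U$ on homology, and $\Ker\rho^*=\Imm\delta$, where $\delta:HF^+_{*+2}(-Y)\to\widehat{HF}_{*+1}(-Y)$ is the connecting map (the degree shifts are immaterial). So $\rho^*$ is surjective onto $\Ker U$ on all of $\widehat{HF}(-Y,\s)$.

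\textbf{Step 2: parity.} By Theorem \ref{teo:correction_term} together with the duality $HF^+(-Y)\simeq HF^-(Y)^\bullet$, $HF^+(-Y,\s)$ is supported in gradings with the parity of $-d=d(-Y,\s)$. Its generators are the functionals $T_{[V]}$, with $M(T_{[V]})=-M(V)$, and their $U$-preimages (equivalently, the dual of the non-torsion module $HF^-(Y)$), all in that parity. Hence $\Ker U$ also lies in that parity, and $\rho^*$ kills the odd part of $\widehat{HF}(-Y,\s)$, where "even" means the parity of $d(-Y,\s)$, i.e. the same parity as $d$.

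\textbf{Step 3: injectivity on the even part.} If $\rho^*\alpha=0$ then $\alpha=\delta\beta$ for some $\beta\in HF^+(-Y)$. The connecting map is computed by lifting $\beta=[b]$ via $U$ and applying $\partial^+$, so it shifts the Maslov grading by an odd amount: $M(\alpha)=M(\beta)+2-1=M(\beta)+1$. Since $\beta$ has the parity of $d$, $\alpha$ has the opposite parity. Therefore $\alpha\notin\widehat{HF}^{\text{ev}}$ unless $\alpha=0$, exactly mirroring the proof of Proposition \ref{prop:correction_term}.

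Combining the three steps, $\rho^*$ restricted to $\widehat{HF}^{\text{ev}}(-Y,\s)$ is injective, and its image equals $\rho^*$ of everything, since the odd part maps to zero; this image is $\Ker U$. The only delicate point is the parity claim for all of $HF^+(-Y)$, not just for $\Ker U$. I would justify it from the fact that $HF^-(Y,\s)$ is generated by non-torsion classes in a single parity (Theorem \ref{teo:correction_term}), transported through the canonical duality, which respects gradings up to sign.
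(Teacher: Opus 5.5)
Your proof is correct, but it is organised differently from the paper's.

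\textbf{The paper's route.} The paper argues in one line by formal dualisation. Under the canonical identifications $HF^+(-Y)\simeq HF^-(Y)^\bullet$ and $\widehat{HF}(-Y)\simeq\widehat{HF}(Y)^\bullet$, the map $\rho^*$ is the transpose of $\psi_*$. Proposition \ref{prop:correction_term} says that $\psi_*$ induces an isomorphism $HF^-(Y,\s)/U\cdot HF^-(Y,\s)\cong\widehat{HF}^{\text{ev}}(Y,\s)$. Transposing it, the image of $\rho^*$ is the annihilator of $U\cdot HF^-$, namely $\Ker U$. Its kernel is the annihilator of $\widehat{HF}^{\text{ev}}(Y,\s)$, namely the odd part.

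\textbf{Your route.} You rerun the mechanism of Proposition \ref{prop:correction_term} intrinsically on $-Y$. Exactness of $0\to\widehat{CF}\to CF^+\xrightarrow{U}CF^+\to 0$ gives $\Imm\rho^*=\Ker U$ and $\Ker\rho^*=\Imm\delta$. The odd degree shift of $\delta$, combined with the single-parity support of $HF^+(-Y,\s)$, then gives injectivity on the even part.

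\textbf{What each buys.} The paper's version is shorter. It relies on the standard fact that the duality pairing intertwines the inclusion $\widehat{CF}(-Y)\hookrightarrow CF^+(-Y)$ with the projection $CF^-(Y)\to\widehat{CF}(Y)$, and $U$ with its transpose. Yours uses the duality only degreewise, to transport the parity support of $HF^-(Y,\s)$. You derive that support correctly from Theorem \ref{teo:correction_term}: full-path classes are non-torsion and homogeneous, and all characteristic vectors restricting to $\s$ have Maslov grading congruent to $d$ mod $2$. Your version also makes visible that image and kernel swap roles relative to the $HF^-$ case. There the kernel $U\cdot HF^-$ came from exactness and the image needed the $\partial^-a=U^nb$ parity argument. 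Here the image $\Ker U$ comes from exactness and the kernel needs the parity argument.

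\textbf{One small slip.} The connecting map in the long exact sequence is $\delta:HF^+_*(-Y)\to\widehat{HF}_{*+1}(-Y)$, not $HF^+_{*+2}\to\widehat{HF}_{*+1}$. Your Step 3 computation $M(\alpha)=M(\beta)+1$ is the correct one, and only the oddness of the shift is used anyway.
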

\begin{proof}
  The map $\rho*$ is dual to $\psi_*:HF^-_*(Y,\s)\rightarrow\widehat{HF}^\text{ev}_*(Y,\s)$, and the latter is an isomorphism, when projecting over $\Ker\psi_*=U\cdot HF^-(Y,\s)$, because of Proposition \ref{prop:correction_term}.  
\end{proof}

\section{Tau-invariants and Bennequin inequalities}
\label{section:three}
Given a Seifert space $M=M(e_0;r_1,...,r_n)$ we can consider a regular fibre $K\subset M$. This gives a knot that can be represented by an unlabelled vertex attached to the central vertex of the standard graph $\Gamma$ of $M$.
We recall that in the case of the Brieskorn sphere $\Sigma(a_1,...,a_n)$ the self-intersection $[D]\cdot[D]$ of the disk $D\subset X(\Gamma)$ bounded by $K$ equals $-a_1\cdots a_n$.
For a general Seifert fibred space  one has that $[D]\cdot[D]=\frac{1}{e(M)}$, where  $e(M):=e_0+r_1+\cdots +r_n$. Note that $e(M)<0$ if and only if the standard graph of $M$ is negative-definite.

Take $\gamma\in\widehat{HF}(M,\s)$ such that $\psi_*[W]=\gamma$ for some full path $[W]$ ending correctly. The same arguments, as presented in \cite[Theorem 1.2]{Alfieri} and \cite[Theorem 7.3]{AC} for any almost-rational graph, show that the invariant $\tau_\gamma(K)$, associated to a homology class $\gamma$ \cite{AC}, can be expressed as follows when $M$ is Seifert fibred: 
\begin{equation}
   \label{eq:tau}
    \tau_\gamma(K)=\dfrac{1}{2}\left(\dfrac{1}{-e(M)}+\min_{[Z]=[W]}e_1\cdot Z\right)=\dfrac{1}{2}\left(\dfrac{1}{-e(M)}+e_1\cdot V_\gamma\right)
\end{equation} where $V_\gamma$ is the initial vector of the full path $[W]$ by Lemma \ref{lemB}.  
Then \cite[Theorem 1.2]{Alfieri} also shows that the invariant $\tau(Y,K,\s)$, the minimal among the $\tau$-invariants associated to a class mapped to $\Theta^+_\s$ 
\cite{GRS,Raoux}, can be computed in a similar way:
\[\tau(Y,K,\s)=\dfrac{1}{2}\left(\dfrac{1}{-e(M)}+\min_{\substack{M(Z)=d \\ Z\text{ restricts to }\s}}e_1\cdot Z\right)\] which again by Proposition \ref{prop:correction_term} and Lemma \ref{lemB} coincides with the minimum taken among all the initial vectors $V_\gamma$ with Maslov grading equal to $d=d(Y,\s)$.
The Maslov grading of $Z$ can be expressed as $M(Z)=\frac{Z\cdot Z+|\Gamma|}{4}$, see \cite[Equation (3)]{OSz-fullpath}.

We recall the usual $\tau$-Bennequin inequality for transverse links\footnote{Note that some authors may have different conventions for the orientation of the link, thus interchanging the role of $\tau_\xi$ and $\tau_{\overline\xi}$. For example compare Proposition \ref{prop:tau} with the similar inequalities in \cite{LW,Hedden,Raoux}.}. See  \cite[Theorem 1]{Olga}, for the original version in $S^3$, the proof of \cite[Theorem 1.2]{Hedden} for the general knots version, \cite[Theorem 1.1]{LW} about rationally null-homologous knots, and \cite[Theorem 5.6]{Cavallo} for the link case.
\begin{prop}[Plamenevskaya, Hedden, Li-Wu, Cavallo]
 \label{prop:tau}
 If $L\subset(Y,\xi)$ is a link in a rational homology contact three-sphere such that $\widehat c(\xi)\neq0$ then
  \[\emph{SL}_\xi(L)\leq2\tau_\xi(L)-|L|\:,\] where $\tau_\xi(L):=-\tau_{\widehat c(\xi)}(L)$. 
\end{prop}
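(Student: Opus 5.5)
The statement to prove is Proposition \ref{prop:tau}, the transverse $\tau$-Bennequin inequality $\text{SL}_\xi(L)\leq 2\tau_\xi(L)-|L|$. It is an attributed result, so the plan is to reduce it to the cited versions rather than reprove the Floer theory. First I would state the convention: $\text{SL}_\xi(L)$ is the maximal rational self-linking number over transverse realisations of $L$, and $\tau_\xi(L)=-\tau_{\widehat c(\xi)}(L)$ is the Grading-$\tau$ of the class $\widehat c(\xi)\in\widehat{HF}(-Y)$, in the sense of \cite{AC,Raoux}. By definition of $\text{SL}$, it suffices to prove $\self_\xi(T)\leq 2\tau_\xi(L)-|L|$ for every transverse link $T$ isotopic to $L$.

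The argument would go in three steps.

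\textbf{Step 1 (knot case).} Take a transverse knot $T$ and choose an open book for $(Y,\xi)$ in which $T$ is a braid. Then \cite{LW}, extending \cite[Theorem 1.2]{Hedden}, gives a filtered chain-level statement. The contact class $\widehat c(\xi)$ is represented in the knot Floer filtration of $-Y$ by the transverse (LOSS/BRAID) class of $T$. That class lives in Alexander filtration level $\frac{\self_\xi(T)+1}{2}$, with the rational normalisation for rationally null-homologous knots.

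\textbf{Step 2 (from filtration level to $\tau$).} Since $\tau_{\widehat c(\xi)}$ is the minimal filtration level at which $\widehat c(\xi)$ is realised, Step 1 gives $-\tau_\xi(T)\leq -\frac{\self_\xi(T)+1}{2}$. Rearranging yields $\self_\xi(T)\leq 2\tau_\xi(T)-1$. The one point to check is the sign convention linking $\tau_{\widehat c(\xi)}$ in $-Y$ to our $\tau_\xi$; this is the source of the footnote about interchanging $\tau_\xi$ and $\tau_{\overline\xi}$.

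\textbf{Step 3 (links).} For a link $T$ with $|L|$ components, I would follow \cite[Theorem 5.6]{Cavallo}. Perform transverse connected sums of the components, which amounts to adding $|L|-1$ one-handles in a symplectic/contact manner. The result is a knot $T'$ in $(Y\#_{|L|-1}S^1\times S^2,\xi')$ with $\self(T')=\self(T)+|L|-1$ and $\tau(T')=\tau(L)$. The contact class stays nonzero under this operation because $\xi'$ is the connected sum with the tight $S^1\times S^2$. Alternatively, one can work directly with the collapsed link Floer filtration. Applying the knot case to $T'$ gives $\self(T)+|L|-1\leq 2\tau_\xi(L)-1$, which is the stated bound.

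The main obstacle is Step 3 when $Y$ is only a rational homology sphere. There one needs both the rational self-linking and the rational $\tau$ to be additive in a compatible way under the connected-sum/one-handle operation. I would verify this by writing the relative Alexander grading through rational Seifert surfaces exactly as in the definition of $\mathcal F$ in Subsection \ref{subsection:Alexander}, and checking that each one-handle shifts both sides by the same amount.
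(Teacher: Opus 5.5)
Your plan is essentially the paper's approach: the paper gives no argument of its own for this proposition and simply attributes it to Plamenevskaya, Hedden and Li--Wu for knots, and to Cavallo for links, which are exactly the reductions you propose (Steps 1--3). One slip to fix in Step 2: minimality of $\tau_{\widehat c(\xi)}$ gives $-\tau_\xi(T)\leq-\frac{\self_\xi(T)+1}{2}$ only if the transverse class sits at level $-\frac{\self_\xi(T)+1}{2}$ of the filtration on $\widehat{CF}(-Y)$ that defines $\tau_{\widehat c(\xi)}$ (this is what correct bookkeeping of the passage to $-Y$ and of setting $U=1$ gives), not at level $+\frac{\self_\xi(T)+1}{2}$ as your Step 1 literally states.
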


We can find a similar upper bound for the $\tb$-number of Legendrian links. We write a complete proof since there is an imprecision in \cite[Theorem 1.2]{Hedden}; moreover, this bound has already been applied incorrectly\footnote{Mark and Tosun apply the incorrect version of \cite[Theorem 1.2]{Hedden} in the proof of \cite[Corollary 3.10]{MT}. This mistake is not meaningful because \cite[Corollary 2.9]{MT} holds for both $\tau_\xi(Y,K)$ and $\tau_\xi(-Y,K)$.}. The first statement is also needed in the proof of Theorems \ref{teo:classification} and \ref{teo:tw}.

For any contact structure $\xi$ on $Y$ we denote by $\overline\xi$ its conjugate structure, that is the contact structure obtained from $\xi$ by inverting the orientation of the planes. This equips the space of oriented contact structures with an involution, which is compatible with the conjugation on the induced $\Spin^c$-structures, so that $\s_{\overline\xi}=\overline{\s_{\xi}}$.
\begin{teo}
 \label{teo:inequality}
 If $L\subset(Y,\xi)$ is a link in a rational homology contact three-sphere such that $\widehat c(\xi)\neq0$ then \[\tb_\xi(\mathcal L)\leq\emph{TB}_\xi(L)\leq\tau_\xi(L)+\tau_{\overline \xi}(L)-|L|\] and \[\tb_\xi(\mathcal L)+|\rot_\xi(\mathcal L)|\leq2\max\{\tau_\xi(L),\tau_{\overline \xi}(L)\}-|L|\] for any Legendrian realisation of $L$. Consequently,  when $c^+(\xi)=\Theta^+_{\s_\xi}$ one has
  \[\emph{TB}_\xi(L)\leq\tau(Y,L,\s_\xi)+\tau(Y,L,\s_{\overline\xi})-|L|\:,\] which simplifies to $\emph{TB}_\xi(L)\leq2\tau(Y,L,\s_\xi)-|L|$ if $\s_\xi$ is spin.
\end{teo}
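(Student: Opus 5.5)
We derive everything from Proposition \ref{prop:tau}, applied to transverse push-offs of a Legendrian realisation, with suitable choices of orientation and of contact structure. Fix a Legendrian realisation $\mathcal L$ of $L$ and let $\mathcal L^{\pm}$ be its positive transverse push-offs for the two orientations of $\mathcal L$. Reversing the orientation of every component changes the sign of $\rot$ and leaves $\tb$ unchanged. Hence
\[
\self(\mathcal L^+)=\tb_\xi(\mathcal L)-\rot_\xi(\mathcal L),\qquad \self(\mathcal L^-)=\tb_\xi(\mathcal L)+\rot_\xi(\mathcal L),
\]
where $\mathcal L^-$ is the push-off of $-\mathcal L$.

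The second inequality follows by applying Proposition \ref{prop:tau} to $\mathcal L^+$ in $(Y,\xi)$ and to the push-off of the reversed link. This gives bounds by $2\tau_\xi(L)-|L|$ and $2\tau_\xi(-L)-|L|$. We then need the identity $\tau_\xi(-L)=\tau_{\overline\xi}(L)$. To prove it, note that $\widehat c(\overline\xi)=\J\widehat c(\xi)$. Moreover, $\J$ intertwines the Alexander filtration of $L$ with that of $-L$, since conjugating the doubly pointed diagram swaps the roles of $\alpha$ and $\beta$, which is the same as reversing the link orientation. Thus $\tau_{\widehat c(\xi)}(-L)=\tau_{\J\widehat c(\xi)}(L)=\tau_{\widehat c(\overline\xi)}(L)$, and in particular $\widehat c(\overline\xi)\neq0$. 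This step is where Hedden's argument is imprecise, so I expect it to be the main obstacle. It has to be done carefully at the level of the filtered complexes, in the rationally null-homologous and multi-component setting of \cite{LW,Cavallo}, keeping track of the $\Spin^c$ conjugation $\s\mapsto\overline\s$. Taking the maximum of the two bounds gives $\tb+|\rot|\leq 2\max\{\tau_\xi(L),\tau_{\overline\xi}(L)\}-|L|$.

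For the first chain, $\tb_\xi(\mathcal L)\leq\text{TB}_\xi(L)$ holds by definition. To bound $\text{TB}$, add the two inequalities
\[
\tb-\rot\leq 2\tau_\xi(L)-|L|,\qquad \tb+\rot\leq 2\tau_{\overline\xi}(L)-|L|.
\]
Both hold for every realisation, and summing cancels the rotation number, giving $2\tb\leq 2\tau_\xi(L)+2\tau_{\overline\xi}(L)-2|L|$. Taking the supremum over realisations yields the bound on $\text{TB}_\xi(L)$. This is exactly where using both orientations matters: one orientation alone only controls $\tb\pm\rot$, not $\tb$.

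Finally, suppose $c^+(\xi)=\Theta^+_{\s_\xi}$. Then $\widehat c(\xi)$ is a class whose image is $\Theta^+$, and since $\tau(Y,L,\s)$ is the extremal value of $\tau_\gamma$ over such classes, the sign convention $\tau_\xi=-\tau_{\widehat c(\xi)}$ gives $\tau_\xi(L)\leq\tau(Y,L,\s_\xi)$. Since $\J$ maps $\Theta^+_{\s}$ to $\Theta^+_{\overline\s}$ (it preserves gradings and the $U$-tower), the same holds for $\overline\xi$ with $\s_{\overline\xi}$. Substituting into the first chain gives the stated bound. When $\s_\xi$ is spin we have $\s_{\overline\xi}=\s_\xi$, and the bound simplifies to $2\tau(Y,L,\s_\xi)-|L|$.
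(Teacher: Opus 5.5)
Your proposal is essentially the paper's proof. It applies Proposition~\ref{prop:tau} to $\mathcal L$ and $-\mathcal L$, obtains $\tau_\xi(-L)=\tau_{\overline\xi}(L)$ from the conjugate diagram $(-\Sigma,\beta,\alpha,\mathbf w,\mathbf z)$, then sums (for $\text{TB}$) or takes the maximum (for $\tb+|\rot|$), and closes with $c^+(\overline\xi)=\J c^+(\xi)=\Theta^+_{\s_{\overline\xi}}$, which uses that $\J$ commutes with $\rho^*$.

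The one loosely stated step is $\tau_\xi(L)\le\tau(Y,L,\s_\xi)$. Here $\widehat c(\xi)$ lives in $\widehat{HF}(-Y)$, while $\tau(Y,L,\s)$ is a minimum over classes $\gamma=\psi_*x\in\widehat{HF}(Y)$ with $x$ non-torsion of grading $d$. The inequality follows from the duality $\langle\psi_*x,\widehat c(\xi)\rangle=\langle x,c^+(\xi)\rangle=\langle x,\Theta^+\rangle=1$ together with $\tau_\xi(L)=\min\{\tau_\gamma(L):\langle\gamma,\widehat c(\xi)\rangle=1\}$, and this is exactly what the paper imports as \cite[Lemma 2.8]{MT}.
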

\begin{proof}[Proof of Theorems \ref{teo:inequality1} and \ref{teo:inequality}]
 The $\tau$-Bennequin inequality for $\mathcal L$ and its reverse are \[\tb_\xi(\pm\mathcal L)-\rot_\xi(\pm\mathcal L)\leq\text{SL}_\xi(\pm L)\leq2\tau_\xi(\pm L)-|L|\:,\] and we know that $\tb_\xi(-\mathcal L)=\tb_\xi(\mathcal L)$ and $\rot_\xi(-\mathcal L)=-\rot_\xi(\mathcal L)$; moreover, we have $\tau_\xi(-L)=\tau_{\overline\xi}(L)$ because there is a canonical identification $(\widehat{CF}(Y,\s_\xi),\mathcal F^{-L})\simeq(\widehat{CF}(Y,\overline\s_\xi),\mathcal F^L)$ of the link Floer homology filtred complexes, given by the conjugation $(-\Sigma,\beta,\alpha,\mathbf w,\mathbf z)$ of the Heegaard diagram $(\Sigma,\alpha,\beta,\mathbf w,\mathbf z)$ as in the definition of the involution $\mathcal J$, see \cite[Lemma 3.12]{OSz-multi}.
 
 For the second part of the statement, we write $\rho^*:\widehat{HF}(-Y)\rightarrow HF^+(-Y)$ for the map induced in homology by the inclusion as in Proposition \ref{prop:plus}. When $c^+(\xi)=\Theta^+_{\s_\xi}$ we have that 
 \[c^+(\overline\xi)=\rho^*(\widehat c(\overline\xi))=\rho^*(\J\widehat c(\xi))=\mathcal J\rho^*(\widehat c(\xi))=\J c^+(\xi)=\J \Theta^+_{\s_\xi}=\Theta^+_{\s_{\overline\xi}}\:,\]
 and we apply \cite[Lemma 2.8]{MT} for both $\xi$ and $\overline\xi$ to the first inequality. 
\end{proof}

\section{Evaluation of the twisting numbers} 
\label{section:four}
In order to prove our main results, we need to understand the \emph{maximal twisting number} of tight contact structures on negative-definite Seifert fibred spaces $M$. These are the integers that capture the maximal difference between the contact and fibration framing of a regular fibre $K$, that is \[\text{tw}(M,\xi,K):=\text{TB}_\xi(K)\:-\text{the fibration framing of } K=\text{TB}_\xi(K)+\frac{1}{e(M)}\:\] for a tight contact structure $\xi$ on $M$. By convention \cite{G-}, if $\text{tw}(M,\xi,K)\geq0$ then we say that $\xi$ is \emph{zero-twisting}; otherwise, we say it is \emph{negative-twisting}. When $n\geq3$ the Seifert fibration is determined by $M$, and we omit $K$; in particular, for the Brieskorn sphere $Y=\Sigma(a_1,\dots,a_n)$, we have $\text{tw}(Y,\xi)=\text{TB}_\xi(K)-a_1\cdots a_n$.

Our analysis of possible twisting numbers is based on the work of Ghiggini \cite{G-}; one could instead start from the analogous results of Massot \cite{Massot}. Extending arguments from \cite[Section 2]{G-} to the Seifert fibred spaces $M=M(e_0;r_1,\dots,r_n)$ with any $e_0\leq-1$ and any number $n\geq 2$ of singular fibres, we get that the existence of a tight contact structure with given twisting number is equivalent to the Seifert constants fulfilling particular numerical conditions.

\begin{prop} 
 \label{prop:Paolo}
 The Seifert fibred space $M=M(e_0;r_1,\dots,r_n)$ admits a tight contact structure $\xi$ with twisting number (of a regular fibre $K$) equal to  $\emph{tw}(M,\xi,K)=-q\leq -1$ if and only if for Seifert coefficients $r_i\in(0,1)\cap\Q$ there exist positive integer numbers $p_1,\dots,p_n$ such that 
\begin{itemize}
 \item $\frac{p_i}{q}>r_i$ and $\gcd(p_i,q)=1$;
 \item $p_1+...+p_n = -e_0q+n-2$;
 \item no rational number $\frac{k}{h}$ with $1\leq h< q$ and $1\leq k\leq p_i$ is in the interval $(r_i,\frac{p_i}{q})$.
\end{itemize}
\end{prop}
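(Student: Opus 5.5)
The plan is to follow Ghiggini's convex-decomposition argument \cite[Section 2]{G-}, now for an arbitrary number $n\geq2$ of singular fibres and any $e_0\leq-1$. Fix a Legendrian regular fibre $K$ realising $\text{tw}(M,\xi,K)=-q$. Take disjoint standard neighbourhoods $V_1,\dots,V_n$ of the singular fibres. Let $\Sigma\times S^1=M\setminus\bigcup V_i$ be the complement, with $\Sigma$ an $n$-holed sphere.

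\emph{Necessity.} Using $K$, I will make each $\partial V_i$ convex with Legendrian vertical rulings of twisting $-q$. Since the twisting number is maximal, the $i$-th boundary torus carries $2$ dividing curves. In the basis adapted to $\Sigma\times S^1$, these curves have slope $-\frac{1}{q}$ shifted by a rational number. I will write the slope, measured in the coordinates of $V_i$, as $\frac{p_i}{q}$-type data. The tightness of $V_i$, together with the requirement that no further thickening with less negative twisting is possible, gives three constraints:
\begin{itemize}
\item the slope must lie on the appropriate side of $r_i$, that is $\frac{p_i}{q}>r_i$, with $\gcd(p_i,q)=1$ because the boundary slope is a primitive class;
\item the condition on $(r_i,\frac{p_i}{q})$ in the last bullet, which is exactly the statement that the solid torus $V_i$ contains no convex torus whose dividing slope has denominator $h<q$;
\item no such torus can exist, since it would contain a vertical Legendrian curve with twisting $-h>-q$, contradicting maximality. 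I will phrase this via Farey-graph geometry: the slopes $\frac{k}{h}$ correspond to vertical rulings of twisting $-h$, and the constraint $k\leq p_i$ comes from restricting to the relevant Farey arc.
\end{itemize}

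The sum condition comes from the Euler characteristic of a vertical convex annulus/pair-of-pants argument on $\Sigma\times S^1$. A horizontal convex surface $\Sigma$ with Legendrian boundary has twisting $-p_i$ along the $i$-th boundary component. By the imbalance principle and the tightness of $\Sigma\times S^1$ (no boundary-parallel dividing curves can be removed without increasing twisting), the dividing set of $\Sigma$ consists of arcs only. Counting via $\chi(\Sigma)=2-n$ and the Euler class relation across the gluing, with the twist $e_0$ contributing $-e_0q$, yields $\sum p_i=-e_0q+n-2$. Making this count rigorous for general $n$ is where I must generalise Ghiggini's $n=3$ bookkeeping.

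\emph{Sufficiency.} Given $p_i$ satisfying the conditions, I will build $\xi$ by gluing. On $\Sigma\times S^1$ I take an $S^1$-invariant contact structure whose dividing set on $\Sigma$ is a collection of arcs meeting the $i$-th boundary $2p_i$ times; the sum condition is exactly what makes such an arc system on the $n$-holed sphere exist. This structure is tight by Giroux's criterion, since there are no homotopically trivial dividing curves. On each $V_i$ I take a tight structure with the prescribed boundary slope, which exists by Honda's classification. Tightness of the glued manifold, and the fact that twisting is exactly $-q$ rather than larger, will follow via the Farey condition: it prevents any bypass that would thicken some $V_i$ to a slope of smaller denominator. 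To conclude tightness I would first try Ghiggini's gluing criterion, or alternatively realise the structure by Legendrian surgery when $e_0\leq-1$, which gives Stein fillability.

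\emph{Main obstacle.} I expect the hardest step to be the necessity direction's translation of maximality into the Farey condition in the last bullet, uniformly in $n$. The key sub-step is excluding bypasses attached along vertical annuli between different $V_i,V_j$. For this I will reproduce Ghiggini's lemma that any such bypass would produce a fibre of twisting $-h>-q$.
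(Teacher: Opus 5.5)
Your treatment of the first and third conditions matches the paper's: you thicken the singular-fibre neighbourhoods with the imbalance principle, and use that every slope in $(r_i,\frac{p_i}{q})$ is realised by a convex torus whose vertical ruling would have twisting $-h>-q$. The sufficiency construction, however, fails outright. Suppose a contact structure on $\Sigma\times S^1$ is invariant in the fibre direction and its dividing set $\Gamma_\Sigma$ is non-empty. Then for $x\in\Gamma_\Sigma$ the fibre $\{x\}\times S^1$ is Legendrian, and the contact planes along it are constant in the product trivialisation, so it has twisting $0$ with respect to the fibration framing. Moreover, the boundary tori are then convex with vertical dividing curves, not curves of slope $-\frac{p_i}{q}$. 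So what you build is zero-twisting, and no condition on the glued-in solid tori can lower the maximal twisting to $-q$. The missing ingredient is a contact structure \emph{transverse} to the fibres with twisting $-q$, which the paper obtains from the numerical conditions via Ghiggini's construction in the proof of \cite[Theorem 4.5]{G-}. The Legendrian-surgery fallback does not rescue this. It is not available for the general $M(e_0;r_1,\dots,r_n)$ of the statement, and even when a Stein diagram exists it does not by itself tell you its twisting number.

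The sum condition also does not come out of your count. A horizontal section only records the unsigned intersection numbers of its boundary curves with the dividing sets. The constraint ``arcs only, none $\partial$-parallel'' imposes no identity among these: already on a pair of pants, arcs joining distinct boundary components realise any endpoint counts satisfying the triangle inequalities. The required identity is instead a signed relation between slopes: in the product framing of the background the slopes are $-\frac{p_1}{q},\dots,-\frac{p_{n-1}}{q}$ and $\frac{p_1+\cdots+p_{n-1}-n+2}{q}$. Neither $\chi(\Sigma)$ nor the relative Euler class (which varies with $\xi$) pins this down.

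The paper's argument is different. It cuts the background along vertical annuli between consecutive tori $-\partial(M\setminus NK_i)$; on these all dividing arcs run across, since a $\partial$-parallel arc would give a bypass increasing the twisting. Rounding edges gives a thickened torus with slopes $-\frac{-p_1-\cdots-p_{n-1}+n-2}{q}+e_0$ and $-\frac{p_n}{q}$. For $q>1$, two distinct slopes with denominator $q$ always have a slope of smaller denominator between them, contradicting maximality.

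Moreover, for $q=1$ the identity is not forced at all. When $e_0<-2$ the natural choice $p_i=1$ gives $\sum p_i=n\neq -e_0+n-2$. One has to absorb the basic slices of the leftover thickened torus into $NK_n$, replacing $p_n=1$ by $p_n=-e_0-1$. Your claim that the count yields the identity uniformly in $q$ misses this case.
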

Note that, according to \cite[Lemma 2.2]{G-}, when $q>1$ also fractions of the form $\frac{k}{q}$ do not lie in the interval $(r_i,\frac{p_i}{q})$; in this case, the first and third conditions are equivalent to say that $\frac{p_i}{q}$ is a \emph{best upper approximation} for each $r_i$. However, when $q=1$ and $e_0<-2$ other integers might appear in the interval.

\begin{proof}[Proof of Proposition \ref{prop:Paolo}]
 The proof of the only if part is modelled on the corresponding proofs by Ghiggini \cite[Propositions 2.1 and 2.6]{G-}. Here, and then in Section \ref{section:five}, we adopt the convention that $-\partial(M\backslash V_i)$, for a tubular neighbourhood $V_i$ of the $i$-th singular fibre, is trivialised by the direction of a regular fibre $\infty$, and so that the meridian of $V_i$ has slope $-r_i$.

 By assumption we have a regular fibre $K$ with $\text{tw}(M,\xi,K)=-q\leq -1$, and we can isotope the fibration so that all the singular fibres $K_i$ are Legendrian as well. Up to stabilising $K_i$ (many times), we can assume that the slope on $-\partial(M\backslash \nu K_i)$, for $\nu K_i$ the standard neighbourhood of $K_i$, is arbitrary close to $-r_i$. Applying imbalance principle \cite[Proposition 3.17]{Honda1} to the vertical annuli between $K$ and a Legendrian ruling on $-\partial(M\backslash \nu K_i)$, we can then thicken $\nu K_i$ to neighbourhoods $N K_i$ so that the slope on $-\partial(M\backslash N K_i)$ equals $-\frac{p_i}{q}$ for some integers $p_i$. Since $-\partial(M\backslash N K_i)$ is obtained by attaching vertical bypasses to $-\partial(M\backslash \nu K_i)$, we have that $\frac{p_i}{q}>r_i$; moreover, since $-q$ is the maximal twisting number, we obtain $\gcd(p_i,q)=1$ and the third condition (as all rational numbers in the interval are realised by slopes on parallel tori in $N K_i\backslash \nu K_i$).
 In fact, by construction the positive integers are uniquely determined as $p_i=\lceil qr_i\rceil$, unless $\lceil qr_i\rceil= qr_i$ when we have $p_i=qr_i+1$ instead.

 Finally, for the second condition, we cut $M\backslash (\cup_iNK_i)$ along vertical annuli between $-\partial(M\backslash N K_i)$ and $-\partial(M\backslash N K_{i+1})$ for $i=1,\dots,n-2$, which after edge rounding \cite[Lemma 3.11]{Honda1} leaves us with a thickened torus with boundary slopes $-\frac{-p_1-\cdots-p_{n-1}+n-2}{q}+e_0$ and $-\frac{p_n}{q}$. When $q>1$, if the two numbers were not the same there would be again a slope with smaller denominator in between, contradicting maximality of $-q$. However, for $q=1$ the second condition is not enforced by contact topology; the positive integers $p_i$ (all equal to $1$) can be, and for $e_0<-2$ they indeed are, such that $-(-p_1-\cdots-p_{n-1}+n-2)+e_0$ is less than $-p_n$. However, in this case the thickened torus between the two slopes consists of $-e_0-1$ basic slices, which we add to $NK_n$, changing $p_n$ from $1$ to $-e_0-1$; a valid choice that fulfils all the conditions.

 For the if part we notice that Ghiggini in the proof of \cite[Theorem 4.5]{G-} constructs a transverse contact structure with twisting number $-q$, starting from the assumed numerical conditions on Seifert constants. All his arguments work the same with more singular fibres and other Euler numbers.
\end{proof}

Recall that $M$ has negative-definite standard graph if and only if $e(M):=e_0+r_1+\cdots +r_n<0$. We will see that negative-definiteness, together with the above numerical conditions, ensures that there is a unique possible twisting $-q$ for all tight structures on $M$, and we will determine its value. In the following, we order the singular fibres so that $r_1\geq r_2\geq \cdots \geq r_n$.

In the proof of the following proposition we will use the \emph{Farey sequences} to keep track of the rational numbers with small denominators. Here, we recall only the very basic facts and refer to \cite{NZM} for details. Farey sequences are sequences of rational numbers constructed from the starting sequence $\frac{0}{1},\frac{1}{0}$ by always taking the previous sequence and inserting $\frac{a+b}{c+d}$ between every pair of consecutive terms $\frac{a}{c},\frac{b}{d}$. We will need the following facts: every positive rational number appears as a term eventually, the fractions in each sequence come in increasing order, and that the consecutive fractions in any sequence form a basis of $\Z^2$ (when considered as vectors in $\Z^2$).

\begin{prop}
 \label{prop:tw}
 If $M=M(e_0;r_1,\dots,r_n)$ has negative-definite standard graph, then all negative-twisting tight contact structures on $M$ have the same  twisting number $-q$.
 Moreover, the integer $-q$ equals the twisting number of tight contact structures on $M(e_0;r_1,r_2)$.
\end{prop}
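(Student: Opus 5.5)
The plan is to turn Proposition~\ref{prop:Paolo} into a purely arithmetic statement and then use $e(M)<0$ to show that it has exactly one solution. From the proof of Proposition~\ref{prop:Paolo}, a negative twisting number $-q$ forces $p_i=\lfloor qr_i\rfloor+1$ for every $i$. So $-q$ is realised if and only if two things hold. First, each $\frac{p_i}{q}$ is the best upper approximation of $r_i$ with denominator at most $q$, meaning no fraction $\frac{k}{h}$ with $h<q$ lies in $(r_i,\frac{p_i}{q})$. Second, the sum condition
\[\textstyle\sum_i(p_i-1)=-e_0q-2\]
holds. I set $\delta_i:=p_i-qr_i\in(0,1]$ and sum, obtaining
\[\textstyle\sum_i\delta_i=q(-e(M))+n-2 .\]
Since the left side is at most $n$ and $e(M)<0$, this gives the a priori bound $q\leq\frac{2}{-e(M)}$. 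It also shows that the total \emph{defect} $\sum_i(1-\delta_i)$ equals $2-q(-e(M))<2$. This is the only place negative-definiteness enters, and everything else should follow from it.

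\textbf{Step 1: reduction to two fibres.} I will show that $p_i=1$ for $i\geq3$, i.e.\ $r_i<\frac1q$. With $k=1$ the third condition is then automatic, because $\frac1h>\frac1q$ for $h<q$. Suppose instead that $p_3\geq2$. By the ordering $r_1\geq r_2\geq r_3$ we then also have $p_1,p_2\geq2$. The best-approximation condition with $q>1$, described via Farey sequences, puts each $\frac{p_i}{q}$ as the right Farey neighbour of some fraction $\frac{a_i}{b_i}<r_i$ with $b_i<q$ and $p_ib_i-qa_i=1$. This gives the lower bound $1-\delta_i\geq$ a definite amount, which for $p_i\geq 2$ and $q$ in the allowed range I expect to be at least $\frac{2}{3}$. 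Three such defects then contradict the bound $\sum_i(1-\delta_i)<2$. The case $q=1$ is trivial, since then all $p_i=1$ except the adjustment made in the proof of Proposition~\ref{prop:Paolo}.

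Once $p_i=1$ for $i\geq3$, the remaining conditions are $p_1+p_2=-e_0q$ together with the best-approximation conditions for $r_1,r_2$. These are exactly the conditions of Proposition~\ref{prop:Paolo} for $M(e_0;r_1,r_2)$, which is still negative-definite because $r_1+r_2<-e_0$. This proves the second claim of the statement, provided uniqueness holds.

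\textbf{Step 2: uniqueness for $n=2$.} I would consider the Farey sequence (or continued fraction) expansions of $r_1$ and of $-e_0-r_2>r_1$. The equation $p_1+p_2=-e_0q$ says that $\frac{p_1}{q}=-e_0-\frac{p_2}{q}$, where $\frac{p_1}{q}$ is the best upper approximation of $r_1$ and $-e_0-\frac{p_2}{q}$ is a best \emph{lower} approximation of $-e_0-r_2$, both with denominator $q$. Hence $\frac{p_1}{q}$ is a fraction in $(r_1,-e_0-r_2]$ such that no fraction of smaller denominator lies in $(r_1,\frac{p_1}{q})$ or in $(\frac{p_1}{q},-e_0-r_2)$. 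Equivalently, $\frac{p_1}{q}$ is the unique fraction of minimal denominator in the interval $(r_1,-e_0-r_2)$; this interval is nonempty precisely because $e(M)<0$. Uniqueness of the minimal-denominator fraction in an open interval is a standard Farey fact. Existence follows from the "if" direction of Proposition~\ref{prop:Paolo}, or simply from the finiteness of the set of candidate fractions.

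\textbf{Main obstacle.} I expect the hardest part to be the defect estimate in Step 1. The claim that a best upper approximation with numerator at least $2$ has $1-\delta_i$ bounded below strongly enough must be checked carefully against the edge cases $q=2,3$ and against $e_0=-1$, where $r_1,r_2$ can be close to $1$. If the uniform bound $\frac23$ fails, I would instead apply the Step 2 argument directly to the pair $(r_1,r_j)$ for each $j$. I would then use minimality of $q$ inside the interval $(r_1,-e_0-\sum_{i\geq2}r_i)$ to force $r_j<\frac1q$ for the smaller fibres.
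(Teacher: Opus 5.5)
\textbf{Step 2 is fine, but Step 1 has a genuine gap, and everything for $n\geq 3$ rests on it.}

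Your Step~2 is sound. It is a neat repackaging of the paper's Farey search: for $n=2$ the admissible $q$ are exactly the minimal denominators of fractions in $(r_1,-e_0-r_2)$, which is what the mediant in Equation~\eqref{eq:q} produces. When $e_0\leq-3$ the fraction is not unique, since several integers lie in the interval, but the denominator $q=1$ is, and that is all you need.

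Step~1 carries the whole reduction for $n\geq3$, including the exclusion of $q>1$ when $e_0\leq-2$, and its defect bound is false. The left Farey neighbour only gives $1-\delta_i=qr_i-(p_i-1)\geq 1-\frac{1}{b_i}$. The hypothesis $p_i\geq2$ forces $b_i\geq2$, not $b_i\geq3$, and $b_i=2$ does occur (then $a_i=1$ and $q=2p_i-1$). For example, $r_i=\frac12$, $q=3$, $p_i=2$ satisfies all three conditions of Proposition~\ref{prop:Paolo} for that fibre, with defect exactly $\frac12$.

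So three fibres with $p_i\geq2$ only give total defect $\geq\frac32$, which does not contradict $\sum_i(1-\delta_i)<2$. Per-fibre estimates cannot close the count; you would have to feed the sum condition back in (for instance, two indices with $b_i=2$ would force $p_1+p_2=q+1$). That becomes a case analysis you have not done.

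Your fallback is not an argument as it stands. Nothing tells you a priori that a solution $q$ of the $n$-fibre problem is the minimal denominator in $(r_1,-e_0-\sum_{i\geq2}r_i)$, or even that $p_1/q$ lies in that interval. That is essentially what is to be proved.

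\textbf{How to repair it.} Uniqueness does not need the reduction at all. The paper compares two hypothetical solutions $q<Q$ with numerators $p_i,P_i$:
\begin{itemize}
\item Since $p_i/q>r_i$ has smaller denominator, the third condition for $Q$ together with coprimality gives $P_i/Q<p_i/q$.
\item Hence $\frac{P_i-p_i}{Q-q}<\frac{P_i}{Q}$ is a fraction of denominator $<Q$, so it must be $\leq r_i$.
\item Summing over $i$ gives $\sum_i r_i\geq\frac{\sum_iP_i-\sum_ip_i}{Q-q}=-e_0$, contradicting $e(M)<0$.
\end{itemize}
This works for every $n$ and every $e_0\leq-1$.

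The ``moreover'' part then only needs the \emph{opposite} direction of your Step~1: the two-fibre solution $q$ from your Step~2 extends to $M$ by setting $p_i=1$ for $i\geq3$. For $e_0\leq-2$, take $q=1$ and $(1,\dots,1,-e_0-1)$. For $e_0=-1$ you must check $r_i<\frac1q$ for $i\geq3$, and this again uses the ordering and $e(M)<0$. Either $q$ is found while the right end of the Farey bracket is $\frac11$, and then $r_i\leq r_2<\frac1q$ directly. Or the bracket $\frac ac\leq r_1<1-r_2\leq\frac bd$ has $c,d\geq2$, and then $r_i<1-r_1-r_2\leq\frac{1}{cd}\leq\frac{1}{c+d}=\frac1q$.
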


\begin{proof}
Let us first look at $M=M(e_0;r_1,\dots,r_n)$ with $e_0<-1$. Taking $q=1$, $p_i=1$ for $i=1,\dots,n-1$ and $p_n=-e_0-1$, the properties follow immediately. We just need to check that no $q>1$ gives a solution. We immediately notice that when $e_0\leq-n$ no such $q$ exists because, to fulfil the second condition, at least one $\frac{p_i}{q}$ is bigger than one, but then $1\in(r_i,\frac{p_i}{q})$ and $1<q$ fails the third condition. In the case that $e_0\in\{-n+1,\dots,-2\}$, we assume to the contrary that there exists $q>1$ and $p_i$ for $i=1,\dots,n,$ that satisfy the three conditions; 
in particular, one has \[\frac{p_1}{q}+\frac{p_2}{q}+\cdots+\frac{p_n}{q}=\frac{-e_0q+n-2}{q} \text{ and } \frac{p_i-1}{q-1}\leq r_i < \frac{p_i}{q}\] 
by the second and third condition respectively, and since $\frac{p_i-1}{q-1}< \frac{p_i}{q}$ when $\frac{p_i}{q}<1$.
Then \[r_1+\cdots+r_n\geq\frac{p_1-1}{q-1}+\frac{p_2-1}{q-1}+\cdots+\frac{p_n-1}{q-1}=\frac{-e_0q-2}{q-1}\geq -e_0\:,\]
which contradicts the negative-definiteness condition $e(M)<0$. In conclusion, when $e_0<-1$, the twisting number of any negative-twisting tight structure is $-1$.

The rest of the proof concerns $M=M(e_0;r_1,\dots,r_n)$ with $e_0=-1$. It relies on the presentation of rational numbers by Farey sequences. First, in the case that all $r_i<\frac{1}{2}$, we can finish very similarly to the case when $e_0=-2$ above; more precisely, we take $q=2$, $p_i=1$ for all $i=1,\dots,n$, and the properties follow immediately. We need to again check that no $q>2$ gives a solution. Such $q>2$ would satisfy 
 \[\frac{p_1}{q}+\frac{p_2}{q}+\cdots+\frac{p_n}{q}=\frac{q+n-2}{q} \text{ and } \frac{p_i-1}{q-2}\leq r_i < \frac{p_i}{q},\] 
by the second and third condition respectively, and since $\frac{p_i-1}{q-2}< \frac{p_i}{q}$ when $\frac{p_i}{q}<\frac{1}{2}$. But then \[r_1+\cdots+r_n\geq\frac{p_1-1}{q-2}+\frac{p_2-1}{q-2}+\cdots+\frac{p_n-1}{q-2}=\frac{q-2}{q-2}=1,\]
contradicting the negative-definiteness condition $e(M)<0$. In conclusion, we are done when all $r_i<\frac{1}{2}$; the twisting number equals $-2$. 

Otherwise, $r_1\geq \frac{1}{2}$ and we find the smallest $m\in\N$ such that $r_1<\frac{m-1}{m}$. If $r_2$, and thus all $r_i$ for $i>1$, are smaller than $\frac{1}{m}$, then $q=m,\ p_1=m-1,\ p_2=\cdots=p_n=1$ fulfil the conditions. Otherwise, since $r_1+\cdots+r_n<1$, we have $\frac{m-2}{m-1}\leq r_1 < 1- r_2 \leq\frac{m-1}{m}$, and since $\frac{m-2}{m-1}\text{ and }\frac{m-1}{m}$ are consecutive fractions in a Farey sequence, all rational numbers in that interval have denominator at least $2m-1$. If \[r_1< \dfrac{(m-2)+(m-1)}{(m-1)+m}=\dfrac{2m-3}{2m-1} < 1-r_2\:,\] then we take $\frac{p_1}{q}=\frac{2m-3}{2m-1}$, $\frac{p_2}{q}=\frac{2}{2m-1}$, and since \[1-r_1-r_2<\dfrac{m-1}{m}-\dfrac{m-2}{m-1}=\dfrac{1}{(m-1)m}\:,\] one has $r_i<\frac{1}{2m-1}=\frac{p_i}{q}$ whenever $i>1$. If not, we continue the same analysis with the sub-interval bounded by the consecutive fractions in the next level Farey sequence, so either $\left[\frac{m-2}{m-1},\frac{2m-3}{2m-1}\right]$ or $\left[\frac{2m-3}{2m-1},\frac{m-1}{m}\right]$, depending on which of the two contains rational numbers $r_1$ and $1- r_2$. Eventually, the integer $q$ will be the denominator of the mid point $\frac{a+b}{c+d}$ between two consecutive fractions $\frac{a}{c}, \frac{b}{d}$ in the Farey sequence, at the first level where 
\begin{equation}
    \label{eq:q}
    \frac{a}{c}\leq r_1<\frac{a+b}{c+d}< 1-r_2\leq\frac{b}{d}\:
\end{equation} then $q=c+d,\ p_1=a+b,\ p_2=c+d-a-b$ and $p_3=\cdots=p_n=1$.

We still need to prove that this is the only $q$ which satisfies the conditions. By construction, $q$ is the smallest integer that gives a solution, to prove that no larger $Q$ works, we once again obtain a similar contradiction to the case of $e_0=-2,\ q>1$ above. Let $Q>q$ satisfy 
 \[\frac{P_1}{Q}+\frac{P_2}{Q}+\cdots+\frac{P_n}{Q}=\frac{Q+n-2}{Q} \text{ and } \frac{P_i-p_i}{Q-q}\leq r_i < \frac{P_i}{Q},\] 
by the second and third condition respectively, and since $\frac{P_i-p_i}{Q-q}< \frac{P_i}{Q}$ when $\frac{P_i}{Q}<\frac{p_i}{q}$. 
Then we would have \[r_1+\cdots+r_n\geq\frac{P_1-p_1}{Q-q}+\frac{P_2-p_2}{Q-q}+\cdots+\frac{P_n-p_n}{Q-q}=\frac{(Q+n-2)-(q+n-2)}{Q-q}=1,\]
contradicting the negative-definiteness condition $e(M)<0$. Hence, there is no possible twisting number other than $-q$. Moreover, the value of $q$ is by Equation \eqref{eq:q} also in this case determined by $r_1$ and $r_2$ only.
\end{proof}

There is a simple lower bound for the twisting number that can be read directly from the proof of Proposition \ref{prop:tw}. The negative twisting number of tight structures on $M$ equals $-q=-1$ when $e_0<-1$, and when $e_0=-1$ it is bounded by $-q\geq -\min\{(1-r_1-r_2)^{-1},\:a_1+a_2\}$ where $a_i$ is the denominator of $r_i$ for $i=1,2$.
Of course, in any specific case the explicit value of the twisting number can also be determined; in particular, when $M(-1;r_1,r_2)=S^3$, we have $q=a_1+a_2$. In the following, we show how this value is expressed in the full path algorithm, and eventually compute it explicitly in general from negative continued fraction expansions of $r_1$ and $r_2$. We note that we could obtain the explicit value already by carefully looking into arithmetic of Farey sequences, but we wish to emphasise the topological reason behind the solution.

We denote by $Y_0=M(e_0;r_1,r_2)$ the lens space obtained by removing all the legs from the graph of $M$ except the two corresponding to the biggest coefficients; moreover, we write $Q_0$ for the intersection form of the standard graph of $Y_0$, which is naturally a negative-definite sub-graph of the one for $M$. We then have that $|\widehat{HF}(Y_0)|=|H_1(Y_0;\Z)|=\det(Q_0)$, and for any $\Spin^c$-structure $\s$ on $Y_0$ we write $W_{\s}$ for the initial characteristic vector of the full path $[W_{\s}]$, which is the unique generator of $\widehat{HF}(Y_0,\s)$.

\begin{teo}
 \label{teo:tw}
 Let $Y_0=M(e_0;r_1,r_2)$ be a Seifert fibred space whose standard graph is negative-definite, and $K$ its regular fibre. Then for any tight contact structure $\zeta$ we have that \[\emph{tw}(Y_0,\zeta,K)=-1-\height[W_{\s_{\emph{can}}}]=-1-\min_{\s\in\Spin^c(Y_0)}\left\{\height[W_\s]\right\}\:,\] where $\s_{\emph{can}}$ is the $\Spin^c$-structure induced by $\zeta_{\emph{can}}$ on $Y_0$, which is a lens space. Furthermore, the minimum of $\height[W_\s]$ is realised by any $\s$ induced by a tight contact structure.
\end{teo}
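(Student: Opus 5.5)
We prove the three equalities in turn: compute the height explicitly, show it is minimal, and match it with the twisting number from Proposition \ref{prop:tw}.

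\emph{Step 1: the height as a count of central steps.} By Definition \ref{def}, $\height[W_\s]$ is the number of central steps $W\mapsto W+2Q_0e_1$ taken along the full path from the initial vector $W_\s$ to the terminal one. Equivalently, it is $-e_1\cdot\frac{W_\s+W_{\overline\s}}{2}$, where we use that $\J[W]=[-W]$ by \cite{DM}. Since $Y_0$ is a lens space, its graph is a linear chain through the central vertex, and every full path is the unique generator of $\widehat{HF}(Y_0,\s)$. We will track how many times the central coordinate reaches $-m(1)=-e_0$ before the path terminates. For $e_0<-1$ we expect to show directly that the answer is $0$ for $V_{\rm can}$. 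Indeed, $V_{\rm can}$ has central coordinate $e_0+2\le -e_0-2$, and the sweep along the legs never pushes the central coordinate up to $-e_0$. This gives $\mathrm{tw}=-1$, in agreement with Proposition \ref{prop:tw}.

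\emph{Step 2: the case $e_0=-1$ via Farey sequences.} Here the central coordinate starts at $1=-m(1)$, so central steps do occur. Each central step adds $2$ to the two neighbouring leg coordinates, and this triggers cascades along the legs. We will encode the evolution of the initial vector of $[V_{\rm can}]$ through the continued fraction data of $r_1$ and $r_2$. Running the cascade on each leg after the $k$-th central step should reproduce the test of whether $\frac{k+1-p_2}{k+1}$ separates $r_1$ from $1-r_2$, in the sense of \eqref{eq:q}. The claim to prove is that the path terminates exactly after $q-1$ central steps, where $q$ is the Farey denominator of Proposition \ref{prop:tw}. Then $\mathrm{tw}=-q=-1-\height[W_{\s_{\rm can}}]$. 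This follows the induction on Farey levels in the proof of Proposition \ref{prop:tw}: for $m$ chosen as there, we first check the base case $r_1<\frac{m-1}{m}$, $r_2<\frac1m$, and then do the mediant refinement step.

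\emph{Step 3: minimality.} For the minimum over all $\s$ we use the Bennequin-type bound of Theorem \ref{teo:inequality}. Any tight $\zeta$ on the lens space $Y_0$ is Stein fillable, so $\widehat c(\zeta)\neq0$. Then \eqref{eq:tau} gives
\[
\mathrm{tw}(Y_0,\zeta,K)+\tfrac{1}{-e}\le\tau_\zeta(K)+\tau_{\overline\zeta}(K)-1=\tfrac{1}{-e}-\height[W_{\s_\zeta}]-1 .
\]
Hence $\height[W_{\s_\zeta}]\le -1-\mathrm{tw}=q-1$ for every $\s_\zeta$ induced by a tight structure. Conversely, we need $\height[W_\s]\ge q-1$ for all $\s$. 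We plan to argue this by applying Proposition \ref{prop:Paolo}-type reasoning in reverse. Suppose a full path had fewer central steps. Then the initial and terminal vectors restricted to the legs would yield integers $p_1,p_2$ with denominator $h+1<q$ satisfying the three conditions. That would contradict the uniqueness and minimality of $q$ from Proposition \ref{prop:tw}. Combining the two inequalities, $\height[W_\s]=q-1$ for every $\s$ coming from a tight structure, and in particular for $\s_{\rm can}$. This proves both equalities and the final assertion of the theorem.

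We expect the main obstacle to be the last step: extracting best upper approximations from an arbitrary full path with few central steps. The leg cascades must be translated precisely into continued fraction and Farey data. If that proves hard, we will instead compute $\height[W_\s]$ for all $\s$ explicitly on the linear chain. We would use the known description of initial vectors of lens-space full paths, with all coordinates in $[m(i)+2,-m(i)]$ and no consecutive pattern allowing a step, and minimise over them directly.
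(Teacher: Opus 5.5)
You have a genuine gap at the decisive step: the lower bound $\height[W_\s]\ge q-1$ is never proved. Your upper bound is correct, and it is in fact the first half of the paper's argument. Since $\widehat{HF}(Y_0,\s)$ is one-dimensional, Theorem~\ref{teo:inequality}, Equation~\eqref{eq:tau} and $\J[W]=[-W]$ give $\text{tw}(Y_0,\zeta,K)\le -1-\height[W_{\s_\zeta}]$ for every tight $\zeta$. This also shows that every tight $\zeta$ is negative-twisting, so Proposition~\ref{prop:tw} applies to all of them.

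The remaining pieces do not close the argument:
\begin{itemize}
\item Step~2 is only announced. Even if completed, it would not suffice: it concerns $[V_{\text{can}}]$ alone, and silently identifies it with $[W_{\s_{\text{can}}}]$, whereas the minimum requires a bound for every $\s$.
\item The mechanism you suggest in Step~3 is unsupported as stated. You give no reason why data read off an arbitrary full path should satisfy the coprimality and best-approximation conditions of Proposition~\ref{prop:Paolo}, so appealing to minimality of $q$ among solutions does not work.
\end{itemize}

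The paper closes the reverse inequality for $\s_{\text{can}}$ geometrically instead. $K$ has a Legendrian realisation $\mathcal K$, obtained from a standard torus knot through the Legendrian surgeries giving the canonical Stein filling. There $\pm\mathcal K$ bound Lagrangian, hence symplectic, surfaces. By \cite[Theorem 1.3]{AC}, both $\pm K$ therefore attain equality in the $\tau$-Bennequin bound. This gives $\tb(\mathcal K)=\tau_{\zeta_{\text{can}}}(K)+\tau_{\overline\zeta_{\text{can}}}(K)-1$ exactly, and the formula follows. The comparison with other $\s$ is then done by an explicit analysis of the full path on the $S^3$-subgraph $\Gamma'_0$, which also yields the value $\#$.

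Your route can still be completed, using a weaker extraction than the one you propose. The case $e_0<-1$ is settled by the two inequalities you already have, so take $e_0=-1$.
\begin{itemize}
\item Write $T=V+2Q_0x$, with $x\ge0$ the step counts from initial to terminal vector. Let $h=x_1$, and let $f_i$ be the count at the first vertex of leg $i$.
\item The central coordinates are forced: $V_1=1$ and $T_1=-1$. Hence $f_1+f_2=h-1$.
\item On leg $i$, with intersection form $Q_i$, set $c=-Q_i^{-1}e_1$. It has positive entries and $c_1=r_i$. Then $f_i=hr_i-\sum_j c_j\frac{T_j-V_j}{2}$.
\item You have $\frac{T_j-V_j}{2}\le -m^i_j-2$. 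From $Q_ic=-e_1$ one computes $\sum_j c_j(-m^i_j-2)=1-r_i-c_{k_i}<1-r_i$. Together these give $f_i+1>(h+1)r_i$.
\item Consequently $\frac{f_1+1}{h+1}$ lies in the open interval $(r_1,1-r_2)$. By \eqref{eq:q}, $q$ is a Farey mediant, so it is the least denominator of any fraction in that interval. Hence $h+1\ge q$.
\end{itemize}

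Combined with your upper bound, this gives $\height[W_\s]=q-1$ for every $\s$ induced by a tight structure, and $\height[W_\s]\ge q-1$ for all $\s$. That is the whole theorem, with no Lagrangian-filling input and no need for Step~2. As it stands, however, this argument is missing from your proposal.
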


\begin{proof}
 We start by showing that $\text{tw}(Y_0,\zeta,K)=-1-\height[W_{\s_{\text{can}}}]$. We proved in Proposition \ref{prop:tw} that the twisting number does not depend on the tight structure $\zeta$; hence $\text{tw}(Y_0,\zeta,K)=\text{tw}(Y_0,\zeta_{\text{can}},K)$. Moreover, by definition of twisting number we have that $\text{tw}(Y_0,\zeta_{\text{can}},K)=\text{TB}_{\zeta_{\text{can}}}(K)+e_1^TQ_0^{-1}e_1$, where $e_1^TQ_0^{-1}e_1=[D]\cdot[D]$ is the self-intersection of the disk $D\subset X(\Gamma_0)$ bounded by $K$ (in other words, the negative of the fibration framing of $K$), and $\Gamma_0\subset\Gamma$ is the standard star-shaped sub-graph of $Y_0$.

 The knot $K$, a regular Seifert fibre of $Y_0$, has a Legendrian realisation $\mathcal K$ of $K$ which is the image of a standard torus knot in $(S^3,\xi_\text{std})$ under some contact $-1$-surgeries, describing the canonical Stein filling of the lens space $(Y_0,\zeta_{\text{can}})$, as in Figure \ref{resolution}; we can see this by performing a sequence of blow-downs on the standard graph of $Y_0$ until we end up with a minimal resolution graph, a technique described in \cite{BP,Nemethi}.
 In addition, a standard torus knot has a Lagrangian filling in $D^4$; by extending this over the contact surgeries we see that $\mathcal K$ also bounds a Lagrangian surface, and thus both $K$ and $-K$ are the boundary of a symplectic curve, in the canonical Stein filling of $(Y_0,\zeta_{\text{can}})$. 

 \begin{figure}[ht]  \begin{tikzpicture}[scale=0.6]
    \tkzDefPoints{0/1/A, 2/2/B, 2/0/C, -2/1/D} \tkzDefPoints{-1.8/0.9/X, -1.8/1.1/Y}
    \tkzDrawSegment(A,B)\tkzDrawSegment(A,C)\tkzDrawSegment(A,D) \tkzDrawSegment(D,X)\tkzDrawSegment(D,Y)
    \tkzDrawPoints[fill,black,size=5](A,B,C)
     \tkzLabelPoint[above left](A){$-1$} \tkzLabelPoint[right](B){$-3$}\tkzLabelPoint[right](C){$-5$} 
     \tkzLabelPoint[above left](D){$K$}     
 \end{tikzpicture}
 \hspace{2cm}
 \def\svgwidth{0.5\textwidth}
\begingroup%
  \makeatletter%
  \providecommand\color[2][]{%
    \errmessage{(Inkscape) Color is used for the text in Inkscape, but the package 'color.sty' is not loaded}%
    \renewcommand\color[2][]{}%
  }%
  \providecommand\transparent[1]{%
    \errmessage{(Inkscape) Transparency is used (non-zero) for the text in Inkscape, but the package 'transparent.sty' is not loaded}%
    \renewcommand\transparent[1]{}%
  }%
  \providecommand\rotatebox[2]{#2}%
  \newcommand*\fsize{\dimexpr\f@size pt\relax}%
  \newcommand*\lineheight[1]{\fontsize{\fsize}{#1\fsize}\selectfont}%
  \ifx\svgwidth\undefined%
    \setlength{\unitlength}{1215.07231945bp}%
    \ifx\svgscale\undefined%
      \relax%
    \else%
      \setlength{\unitlength}{\unitlength * \real{\svgscale}}%
    \fi%
  \else%
    \setlength{\unitlength}{\svgwidth}%
  \fi%
  \global\let\svgwidth\undefined%
  \global\let\svgscale\undefined%
  \makeatother%
  \begin{picture}(1,0.59845971)%
    \lineheight{1}%
    \setlength\tabcolsep{0pt}%
    \put(0,0){\includegraphics[width=\unitlength,page=1]{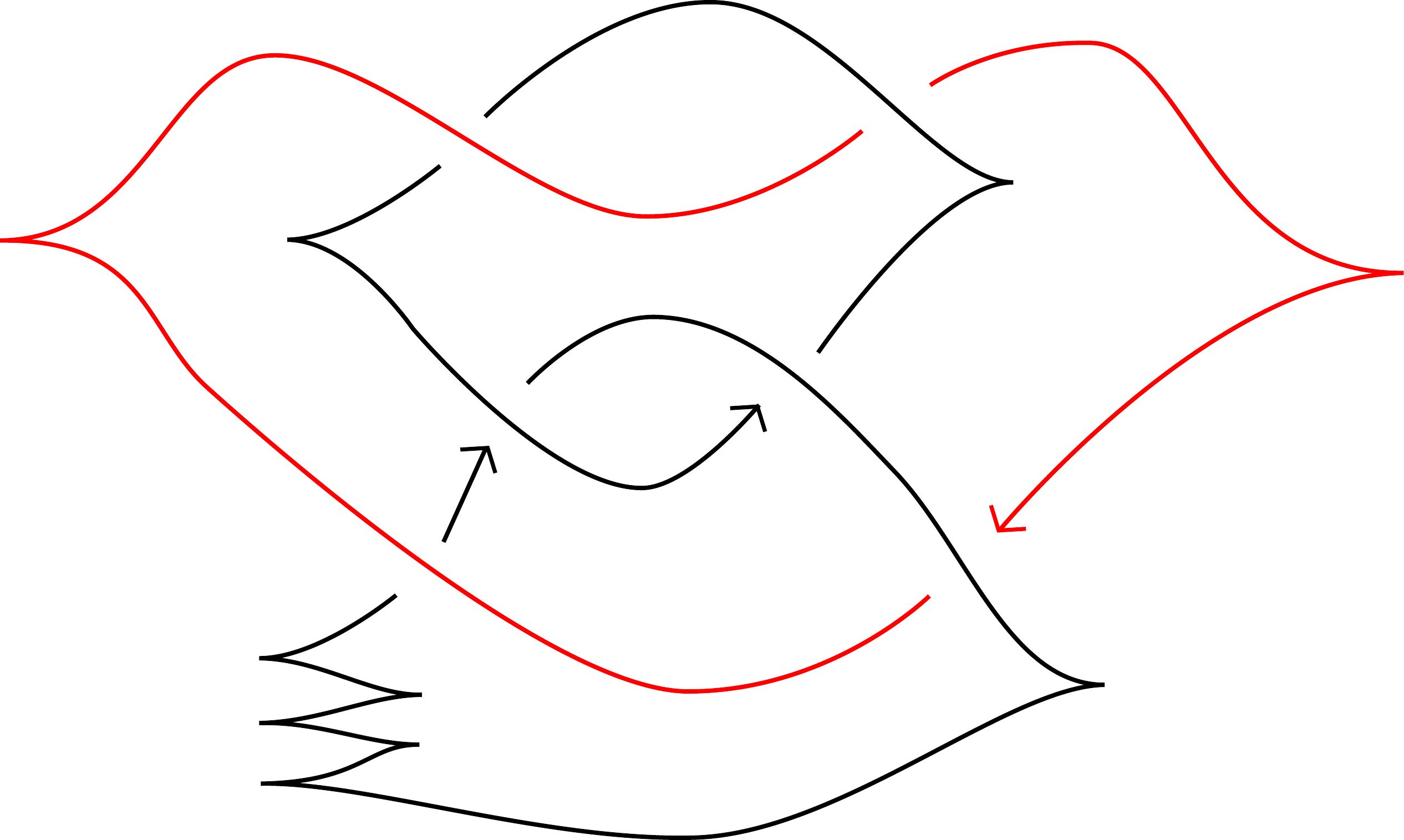}}%
    \put(0.06214429,0.29750443){\color[rgb]{0,0,0}\makebox(0,0)[lt]{\lineheight{1.25}\smash{\begin{tabular}[t]{l}$\mathcal K$\end{tabular}}}}%
    \put(0.67220175,0.40172648){\color[rgb]{0,0,0}\makebox(0,0)[lt]{\lineheight{1.25}\smash{\begin{tabular}[t]{l}$(-1)$\end{tabular}}}}%
    \put(0.77291526,0.14438547){\color[rgb]{0,0,0}\makebox(0,0)[lt]{\lineheight{1.25}\smash{\begin{tabular}[t]{l}$(-1)$\end{tabular}}}}%
  \end{picture}%
\endgroup%

     \caption{\smaller[1]{The regular fibre $K$ of $Y_0=M\left(-1;\frac{1}{3},\frac{1}{5}\right)\cong L(7,2)$ (left), and its Legendrian realisation $\mathcal K$ in $(Y_0,\zeta_{\text{can}})$ (right).}}
     \label{resolution}
 \end{figure}      

 We see the knot $\pm K$ as the transverse push-off of $\pm\mathcal K$ \cite{Baptiste}. Since \cite[Theorem 1.3]{AC} tells us that $K$ and $-K$ satisfy equality in the $\tau$-Bennequin bound, the above argument leads to \[\tb_{\zeta_{\text{can}}}(\mathcal K)-\rot_{\zeta_{\text{can}}}(\mathcal K)=\self_{\zeta_{\text{can}}}(K)=2\tau_{\zeta_{\text{can}}}(K)-1\] and 
 \[\begin{aligned}\tb_{\zeta_{\text{can}}}(\mathcal K)+\rot_{\zeta_{\text{can}}}(\mathcal K)&=\tb_{\zeta_{\text{can}}}(-\mathcal K)-\rot_{\zeta_{\text{can}}}(-\mathcal K)= \\ &=\self_{\zeta_{\text{can}}}(-K)=2\tau_{\zeta_{\text{can}}}(-K)-1=2\tau_{\overline\zeta_{\text{can}}}(K)-1\:,\end{aligned}\]
 which using Theorem \ref{teo:inequality} give \[\tb_{\zeta_{\text{can}}}(\mathcal K)=\text{TB}_{\zeta_{\text{can}}}(K)=\tau_{\zeta_{\text{can}}}(K)+\tau_{\overline\zeta_{\text{can}}}(K)-1\:.\] 
 Finally, we can then apply Equation \eqref{eq:tau} and write \[\begin{aligned}
     \text{tw}(Y_0&,\zeta_{\text{can}},K)=\text{TB}_{\zeta_{\text{can}}}(K)+e_1^TQ_0^{-1}e_1=-1+\tau_{\zeta_{\text{can}}}(K)+\tau_{\overline\zeta_{\text{can}}}(K)+e_1^TQ_0^{-1}e_1=\\ 
     &=-1+\left(-\dfrac{e_1^TQ_0^{-1}e_1}{2}+\dfrac{e_1^TQ_0^{-1}W_{\s_{\text{can}}}}{2}\right)+\left(-\dfrac{e_1^TQ_0^{-1}e_1}{2}+\dfrac{e_1^TQ_0^{-1}W_{\overline\s_{\text{can}}}}{2}\right)+e_1^TQ_0^{-1}e_1=\\ &=-1+e_1^TQ_0^{-1}\left(\dfrac{W_{\s_{\text{can}}}+W_{\overline\s_{\text{can}}}}{2}\right)=-1-\height[W_{\s_{\text{can}}}] \end{aligned}\] 
     from Equation \eqref{eq:height}, which concludes the first part of the statement.
 
 In order to prove the second part, we need to show that $\height[W_{\s_{\text{can}}}]\leq\height[W_{\s}]$ for every $\s\in\Spin^c(Y_0)$, and that the equality is realised when $\s$ is induced by a tight structure. When $e_0<-1$ there is nothing to prove, as in this case the standard graph describes the canonical Stein filling, and the coordinates of the characteristic vector for $\s$ induced by a tight contact structure are rotation numbers, which never equal the (negative of the) framing. Below we prove the statement for $e_0=-1$.

 The standard graph $\Gamma_0$ for $M(-1;r_1,r_2)$ is not minimal, having the central vertex $S_1$ with framing $m(1)=-1$; hence, there is a sub-graph, call it $\Gamma'_0$, that blows down when passing to a minimal resolution graph of a lens space. The sub-graph $\Gamma'_0$ then describes $S^3$, and therefore in order to have an initial characteristic vector whose full path ends correctly, its restriction to the vertices of $\Gamma'_0$ has to agree with $W_{\s_{\text{can}}}$. Furthermore, since the $\Spin^c$-structure on $S^3$ is self-conjugate, before reaching the terminal vector at least the coordinates corresponding to the vertices of $\Gamma'_0$ need to become as in the restriction of $-W_{\s_{\text{can}}}$. In the case of $W_\s$ for a $\Spin^c$-structure $\s$ induced by a tight contact structure, these are the only steps from the initial to the terminal vector (since coordinates on vertices neighbouring $\Gamma'_0$ are low enough that after blowing down, they correspond to rotation numbers); meanwhile, for any other $\s$ there are additional steps in the full path which may also lead to additional central steps (from $V$ to $V+2\text{PD}[S_1]$), causing a change of filtration level, and thus extending $\height[W_{\s}]$.

 More explicitly, we write the negative continued fraction expansions $-\frac{1}{r_1}=[m_1^1,\dots,m_{k_1}^1]$ and $-\frac{1}{r_2}=[m_1^2,\dots,m_{k_2}^2]$, and then their negative inverse is expressed by adding zero as the first term to the fractions, so that 
 $r_1=[0,m_1^1,\dots,m_{k_1}^1]$ and $r_2=[0,m_1^2,\dots,m_{k_2}^2]$. Denote \begin{equation}(\ell_1,\ell_2):=\max_{\substack{i_1\geq0\\ i_2\geq0}}\left\{(i_1,i_2)\:|\:\begin{aligned} &\ [0,m_1^1,\dots,m_{i_1}^1+1]+[0,m_1^2,\dots,m_{i_2}^2]=1 \\ &\ \hspace{3cm}\text{ or } \\ &\ [0,m_1^1,\dots,m_{i_1}^1]+[0,m_1^2,\dots,m_{i_2}^2+1]=1\end{aligned}\right\}\:.\label{eq:l}\end{equation}
 Then $\Gamma'_0$ consists of the vertices $\{S_1,S_1^1,\dots,S_{\ell_1}^1,S_1^2,\dots,S_{\ell_2}^2\}$, and the restriction of an initial vector to $\Gamma'_0$ is $(1,m_1^1+2,\dots,m_{\ell_1}^1 +2, m_1^2+2,\dots,m_{\ell_2}^2+2)$. We can assume that the full path starts with $S_1$ and continues with vertices of $\Gamma'_0$ until all of them satisfy the terminal condition; this requires as many central steps as 
 \begin{equation} 
  \#:=\label{eq:sharp}\text{denominator }[0,m_1^1,\dots,m_{\ell_1}^1] + \text{denominator }[0,m_1^2,\dots,m_{\ell_2}^2]-1,
 \end{equation} until we reach $(-1,-m_1^1-2,\dots,-m_{\ell_1}^1 -2, -m_1^2-2,\dots,-m_{\ell_2}^2-2)$. Afterwards, the coordinates at $S_{\ell_1+1}^1$ and $S_{\ell_2+1}^2$ are increased, and all other coordinates are as in the initial vector: the increase is by $2$ at $S_{\ell_1+1}^1$ and by $2\ell_1+2$ on $S_{1}^2$ if $\ell_2=0$; in  other cases it is by $2$ at $S_{\ell_1+1}^1$ and $4+2T$ at $S_{\ell_2+1}^2$ for $T$ the number of final $-2$'s in $[0,m_1^1,\dots,m_{\ell_1}^1]$ when we are in the first line of Equation \eqref{eq:l}, or with the role of the 1-st and 2-nd leg swapped for the second line. 
 
 In the case of $W_{\s_{\text{can}}}$, or any $W_\s$ for $\s$ induced by a tight structure, despite of the change in the coordinates we do not reach $-m_{\ell_1+1}^1$ or $-m_{\ell_2+1}^2$, nor the initial vector had any coordinate $v_j$ equal to $-m(j)$ for $j\neq1$, and the path ends. Otherwise, for general $\s$, additional steps are needed because some of the coordinates $v_j$, unchanged from the initial vector, satisfy $v_j=-m(j)$, or because they have become such by the correction above; these steps possibly lead to additional central steps, enlarging $\height[W_{\s}]$.
\end{proof}

\begin{remark}\label{rmk:tw}
 The fact that the canonical characteristic vector $V_{\emph{can}}$, such that $v_i=m(i)+2$ for each $i$, minimises the height of the full path, that is $\height[V_{\emph{can}}]\leq\height[V]$ for any $V$ whose full path ends correctly, holds generally for any Seifert fibred space $M(e_0;r_1,...,r_n)$ with negative-definite standard graph $\Gamma$. In the case of $e_0<-1$ the height is always $0$, and when $e_0=-1$, there is a sub-graph $\Gamma'_0$ representing $S^3$ and $m_1^i<-1-\#$ for $i>2$ (and also for $i=2$ when $\ell_2=0$) because of negative-definiteness; hence, we can conclude as in the proof of Theorem \ref{teo:tw}.
\end{remark}

\begin{proof}[Proof of Theorem \ref{cor:tw}]
For $e_0<-1$ we have shown in the proof of Proposition \ref{prop:tw} that the twisting number is equal to $-1$, and in the proof of Theorem \ref{teo:tw} that $\height[V_{\text{can}}]=0$.

For $M(-1;r_1,\dots,r_n)$ Proposition \ref{prop:tw} tells that the twisting number of the tight structures equals the one of $M(-1;r_1,r_2)$, which by the proof of Theorem \ref{teo:tw} equals $-1-\#$. Finally, that $\height[V_{\text{can}}]=\#$ follows from Remark \ref{rmk:tw}.
\end{proof} 

\section{Proof of the main results}
\label{section:five}
\subsection{Classification of tight structures}
\label{subsection:classification}
Once we have determined the twisting number, the classification follows the usual scheme: first, based on the work of Honda \cite{Honda1,Honda2}, or equivalently Giroux \cite{Giroux1, Giroux2}, an upper bound is found via convex decompositions, then it is shown that the upper bound is realised using explicit constructions.
\begin{prop}
 \label{prop:previous}  
 Suppose that $M=M(e_0;r_1,\dots,r_n)$ is a Seifert fibred space with negative-definite standard graph. The negative-twisting tight contact structures on $M$ are precisely the Legendrian surgeries on all possible Legendrian realisations of the complete blow-down of the standard graph.  
\end{prop}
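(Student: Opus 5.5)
We follow the usual two-sided scheme: an upper bound on the number of negative-twisting tight structures from a convex decomposition, and a matching lower bound from explicit Legendrian surgery diagrams, distinguished by Heegaard Floer or rotation-number data.

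\textbf{Upper bound.} By Proposition~\ref{prop:tw} every negative-twisting tight $\xi$ on $M$ has the same twisting number $-q$, with $q=1$ if $e_0<-1$ and $q=1+\#$ if $e_0=-1$. We fix a Legendrian regular fibre $K$ with $\text{tw}=-q$ and Legendrian singular fibres $K_i$. As in the proof of Proposition~\ref{prop:Paolo}, we thicken the neighbourhoods $NK_i$ until $-\partial(M\backslash NK_i)$ has slope $-\frac{p_i}{q}$, with the $p_i$ uniquely determined by the numerical conditions. Then $M\backslash\bigcup_i NK_i$ is a neighbourhood of a vertical Legendrian curve times a pair of pants (iterated for $n>3$). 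By Ghiggini's and Honda's arguments, the maximality of the twisting number forces the vertical annuli to have no boundary-parallel dividing curves. Hence this piece carries a unique tight structure, up to the finite number of choices of holonomy (the "$e_0$-relative" count), and it is uniquely determined once the twisting equals $-q$.

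The contact structure is then determined by its restriction to the solid tori $NK_i$. These have boundary slope $-\frac{p_i}{q}$ and meridian slope $-r_i$, and the count of tight structures on each is given by Honda's classification of the continued fraction blocks of the path from $-r_i$ to $-\frac{p_i}{q}$ in the Farey graph. The upper bound is the product of these counts, together with the basic-slice sign choices in the thickened torus glued in when $q=1$ and $e_0<-2$ (where the $-e_0-1$ basic slices are absorbed into $NK_n$).

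\textbf{Lower bound.} We perform the complete blow-down of the standard graph, as described after Theorem~\ref{teo:classification}: repeatedly blowing down the unique $-1$ unknot, which exists by negative-definiteness, yields a torus link $T_{(n-2)p,(n-2)q}$ together with the remaining leg chains. For $e_0=-1$, the pair $(p,q)$ is the one coming from the sub-graph $\Gamma'_0$ in the proof of Theorem~\ref{teo:tw}. We check that every framing coefficient in this diagram is at most $\text{TB}-1$ of the standard Legendrian realisation, namely the max-tb torus link in $(S^3,\xi_{std})$ and max-tb unknots for the chains. This makes each diagram a valid Stein handlebody, and every choice of stabilisations gives a Stein fillable, hence tight, structure.

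Their twisting number is $-q$: by the argument in the proof of Theorem~\ref{teo:tw}, a regular fibre is realised by a torus knot meridian-type curve whose $\tb$ achieves $-1-\#$ relative to the fibration framing. The number of stabilisation choices is counted as $\prod(|\text{coefficient}-\tb|+1)$ over the components, grouped by rotation vectors. Distinct rotation vectors give distinct $c_1(J)$ on the filling, hence distinct $c^+$ by Lisca--Mati\'c / Plamenevskaya, so all of these structures are pairwise non-isotopic.

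\textbf{Main obstacle.} The hardest step is the arithmetic matching of the two counts. The number of distinct rotation vectors from the blow-down diagram must equal the product of the Honda counts on the solid tori $NK_i$ with boundary slope $-\frac{p_i}{q}$. We would first try to show that the continued fraction of the slope change from $-r_i$ to $-\frac{p_i}{q}$ equals the framing sequence of the $i$-th leg after blow-down, with the first entry shifted by exactly the amount the blow-down contributes to the torus-link coefficient. This should follow from the Farey description of $q$ in Equation~\eqref{eq:q}: blowing down $\Gamma'_0$ corresponds to walking the Farey path from $\frac{a}{c}$ and $\frac{b}{d}$ to $\frac{a+b}{c+d}$. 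A secondary concern is making the pair-of-pants uniqueness precise when $n>3$.
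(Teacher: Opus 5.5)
Your outline is essentially the paper's proof: uniqueness of the background $\beta(p_1,\dots,p_{n-1};q)$ on $\Sigma_{0,n}\times S^1$ via Ghiggini's argument plus Honda's solid-torus counts for the upper bound, and Legendrian realisations of the complete blow-down for the lower bound. The paper defers distinctness (Plamenevskaya) and negative twisting to the proof of Theorem~\ref{teo:classification}, where negative twisting comes from the $c^+\neq0$ argument rather than your direct $\tau$-Bennequin check.

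The matching you flag as the main obstacle is carried out in the paper along the lines you suggest. The background slopes are $-\frac{p}{q}=[m^1_1,\dots,m^1_{\ell_1},\mu^1]^{-1}$ and $-\frac{q-p}{q}=[m^2_1,\dots,m^2_{\ell_2},\mu^2]^{-1}$, and the change of basis $A_{r_i}^{-1}$ gives the glued solid tori slopes $[m^1_{k_1},\dots,m^1_{\ell_1+2},m^1_{\ell_1+1}-\mu^1]$, $[m^2_{k_2},\dots,m^2_{\ell_2+2},m^2_{\ell_2+1}-\mu^2]$ and $[m^i_{k_i},\dots,m^i_2,m^i_1+q]$ for $i>2$. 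Negative-definiteness ($m^j_{\ell_j+1}<\mu^j$, $m^i_1<-q$) is what makes the corresponding Legendrian realisations exist, with the same count \eqref{eq:2}.
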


\begin{proof}
 We can inductively expand the arguments of Ghiggini from \cite[Section 3]{G-} to an $n$-times punctured sphere $\Sigma_{0,n}$, and thus obtain that (up to an isotopy not necessarily fixed on the boundary) there is a unique tight contact structure on $\Sigma_{0,n}\times S^1$ with maximal twisting number $-q$ and  boundary slopes $-\frac{p_1}{q},\dots,-\frac{p_n}{q}$ with $p_n=-p_1-\cdots-p_{n-1}+n-2$. Following \cite{G-}, we will denote this unique tight structure by $\beta(p_1,\dots,p_{n-1};q)$. Furthermore, we call the \emph{background} of $(M,\xi)$ a piece of its convex decomposition which is equal to $(M,\xi)$ with neighbourhoods of singular fibres removed, so that vertical ruling curves on the boundary have twisting equal to $\mbox{tw}(M,\xi,K)$. The background is diffeomorphic to $\Sigma_{0,n}\times S^1$ and by the above uniqueness, for $\mbox{tw}(M,\xi,K)=-q$ it is contact isotopic to $\beta(p_1,\dots,p_{n-1};q)$. However, the trivialisation we use for its boundary tori is not the one from the product structure, but such that $-r_i$ is meridional slope of the glued-in torus; in this trivialisation, the boundary slopes equal $-\frac{p_1}{q},\dots,-\frac{p_n}{q}+e_0$. 

 When $e_0<-1$, the background is $\beta(1,1,\dots,1;1)$ and there are, by \cite[Theorem 2.3]{Honda1}, $\prod_{j=1}^{k_i}|m_j^i+1|$ possible tight contact structures on a solid torus glued in as a neighbourhood of the $i$-th singular fibre for $i=1,\dots,n-1$ and $|e_0+1|\prod_{j=1}^{k_n}|m_j^n+1|$ for the $n$-th fibre, where $-\frac{1}{r_i}=[m_1^i,\dots,m_{k_i}^i]$. Altogether, this gives the upper bound of \begin{equation}|e_0+1|\displaystyle\prod_{i=1}^n\displaystyle\prod_{j=1}^{k_i} |m_j^i+1|\label{eq:1}\end{equation} tight contact structures, which are realised by Legendrian surgery on all possible Legendrian realisations of the standard graph.

 When $e_0=-1$, we proved in Section \ref{section:four} that $q=1+\#$ as in Equation \eqref{eq:sharp} and the background is isotopic to $\beta(p,q-p,1,\dots,1;q)$. In fact, we can see more: \[-\frac{p}{q}=[m_1^1,\dots,m_{\ell_1}^1, \mu^1]^{-1} \text{ and }-\frac{q-p}{q}=[m_1^2,\dots,m_{\ell_2}^2, \mu^2]^{-1}\] where $(\mu^1,\mu^2)=(-2,-\ell_1-2)$ when $\ell_2=0$ (in this case $q=\ell_1+2,\ p=\ell_1+1$), and otherwise $(\mu^1,\mu^2)=(-2,-3-T)$ when $+1$ is in the first fraction in Equation \eqref{eq:l} and $(-3-T,-2)$ when $+1$ is in the second fraction; in both cases, $T$ is the number of final $-2$'s in the fraction $[m_1^\epsilon,\dots,m_{\ell_\epsilon}^\epsilon]$ where the $\epsilon$-th leg is the one with $+1$. Moreover, because of negative-definiteness,  if generators at the ($\ell_i+1$)-th position exist, they are $m_{\ell_1+1}^1<\mu^1$ or $m_{\ell_2+1}^2<\mu^2$, and $m_1^i$ for $i>2$ are smaller than $-q$. 

 The expression of boundary slopes of the background by truncated continued fractions makes it evident that not all basic slices in the decomposition of the $n$ solid tori are free. In fact, the solid tori that we glue onto the background in order to form $M$ will have boundary slope equal to $-\frac{p_i}{q}$, and meridional slope equal to $-r_i$ seen in the background basis, which on the tori (for $r_i=\frac{b_i}{a_i}$) corresponds to: \[A_{r_i}^{-1}\begin{pmatrix} p_i\\ q\end{pmatrix}:=\begin{pmatrix} a_i'&-b'_i\\ -a_i&  b_i\end{pmatrix}\begin{pmatrix} p_i\\ q\end{pmatrix}=\begin{pmatrix} \beta_i\\ \alpha_i\end{pmatrix}\ ,\] that is boundary slopes $-\frac{\alpha_1}{\beta_1}=[m_{k_1}^1,\dots,m_{\ell_1+2}^1,m_{\ell_1+1}^1-\mu^1], -\frac{\alpha_2}{\beta_2}=[m_{k_2}^2,\dots,m_{\ell_2+2}^2,m_{\ell_2+1}^2-\mu^2],$ and $-\frac{\alpha_i}{\beta_i}=[m_{k_i}^i,\dots,m_{1}^i+q]$ for $i>2$.  

 Exactly as many structures on $M$ can be obtained in $(S^3,\xist)$ by Legendrian realisations of the surgery diagram we get by blowing down the standard graph. In fact, set $\ell_i=0$ for $i=3,...,n$, then the number of structures when $e_0=-1$ is equal to \begin{equation} |m_{\ell_1+1}^1-\mu^1|\cdot|m_{\ell_2+1}^2-\mu^2|\cdot\prod_{i=3}^n|m_{1}^i+q|\cdot\prod_{i=1}^n\prod_{j=\ell_i+2}^{k_i}|m_j^i+1|\:.\label{eq:2}\end{equation} 
 Below we will denote $d_1:=\text{denominator }[0,m_1^1,\dots,m_{\ell_1}^1]$ and $d_2:=\text{denominator }[0,m_1^2,\dots,m_{\ell_2}^2]$. As we successively blow down the (truncated part of) the first two legs, this causes cabling of all the other singular fibres into parallel torus knots $T_{d_2,d_1}$. In the process, the first leg causes twisting in the meridional and the second leg in the longitudinal direction; if we imagine the torus link forming on the surface of a torus, the two unknots corresponding to the first two singular fibres appear as the belt circle above the torus surface, and as the core in the centre of the torus. We describe the result explicitly below.

 When $\ell_2=0$, we have surgery along an $(n-1,(n-1)d_1)$-cable of one component of standard $T_{2,2}$, with surgery coefficients $[m_1^i+q-1,m_2^i,\dots,m_{k_i}^i]$ for $i\geq 2$ on cabled components and $[m_{\ell_1+1}^1+1,m_{\ell_1+2}^1,\dots,m_{k_1}^1]$ on the remaining one.

 Meanwhile, in the general case the surgery diagram consists of a link whose $n$ components are standard torus knots: they are an $((n-2)d_2,\:(n-2)d_1)$-cable of the third component of a standard $T_{3,3}$ link, with surgery coefficients $[m_1^i+q-1,m_2^i,\dots,m_{k_i}^i]$ for $i\geq 3$, and such that each component of the cable has linking number $d_1$ with the second, and $d_2$ with the first, component of $T_{3,3}$. The surgery coefficients on the first and second component of $T_{3,3}$ are $[m_{\ell_1+1}^1-\mu^1-1,m_{\ell_1+2}^1,\dots,m_{k_1}^1]$ and $[m_{\ell_2+1}^2-\mu^2-1,m_{\ell_2+2}^2,\dots,m_{k_2}^2]$.
\end{proof}
We now complete the proof of Theorem \ref{teo:classification}; in particular, we show that if a contact structure $\xi$ on a Seifert fibred space $M$ as above has non-vanishing $c^+$ then it is negative-twisting. Note that this happens for example when $\xi$ is symplectically fillable. We recall that we introduced the concept of height of a full path class, with respect to $\mathcal F$, in Equation \eqref{eq:height}. 
\begin{proof}[Proof of Theorem \ref{teo:classification}]
 We have described all negative-twisting tight contact structures on considered manifolds already in Proposition \ref{prop:previous}. Since all of them are constructed by Legendrian surgery from $(S^3,\xist)$, they are all Stein fillable; in addition, since all structures on a given $M$ have smoothly the same filling and are induced by non-homotopic Stein structures on it, they are also distinguished  by the Heegaard Floer contact invariant $c^+$ (as proved by Plamenevskaya \cite{OlgaP}). 

 We now show that for all $\xi$ with $c^+(\xi)\neq 0$ the twisting is negative, which means $\text{tw}(M,\xi,K)<0$.
 Using Theorem \ref{teo:correction_term}, we get a canonical basis $\mathcal C=\{\gamma_1,...,\gamma_t\}$ of $\Imm\psi_*\subset\widehat{HF}(M,\s)$, determined by the full path algorithm (applied to the standard graph of $M$) as $\gamma_i=\psi_*[V_i]$ for $\{V_1,...,V_t\}$ initial vectors of the full paths that end correctly.  
 Since $\widehat c(\xi)\in\widehat{HF}(-M,\s)$ is also non-vanishing, we wish to compare $\tau_\xi(K)$ with $\tau_{\gamma_i}(K)$ for $\gamma_i\in\mathcal C$, for  $K$ the regular fibre of $M$; by \cite[Lemma 2.2]{AC} 
 \[\tau_\xi(K)=-\tau_{\widehat c(\xi)}(K)=\min\{\tau_\gamma(K)\:|\:\gamma\in\widehat{HF}(M,\s)\text{ and }\langle\gamma,\widehat{c}(\xi)\rangle=1\}\:;\] where we are using the canonical duality $\widehat{HF}(-M,\s)\simeq\widehat{HF}(M,\s)^\bullet$.
 Since $\rho^*:\widehat{HF}(-Y,\s)\rightarrow HF^+(-Y,\s)$ sends $\widehat c(\xi)$ into $c^+(\xi)\in\Ker U\subset HF^+(-Y,\s)$,  we  know from \cite[Proposition 3.2]{OSz-fullpath} that there exists at least one class $\gamma_a\in\mathcal C$ satisfying $\langle\gamma_a,\widehat c(\xi)\rangle=1$; analogously, we choose the class $\gamma_b=\J\gamma_a\in\mathcal C$ so that  $\langle\gamma_b,\widehat c(\overline\xi)\rangle=\langle\J\gamma_a,\J\widehat c(\xi)\rangle=1$. Therefore, we can write 
 \begin{equation}
 \tau_\xi(K)\leq\tau_{\gamma_a}(K)\hspace{1cm}\text{ and }\hspace{1cm}\tau_{\overline\xi}(K)\leq\tau_{\gamma_b}(K)\:. 
 \label{eq:gamma}
 \end{equation}
 From Equations \eqref{eq:tau} and \eqref{eq:gamma}, Lemma \ref{lemB}, Theorem \ref{teo:inequality} and Remark \ref{rmk:tw}, we obtain \[\begin{aligned}
  \dfrac{1}{-e(M)}&+\text{tw}(M,\xi,K)=\text{TB}_\xi(K)<\tau_\xi(K)+\tau_{\overline\xi}(K) \leq \\ &\leq\dfrac{1}{2}\left(\dfrac{1}{-e(M)}+e_1\cdot V_a\right)+ \dfrac{1}{2}\left(\dfrac{1}{-e(M)}+e_1\cdot V_b\right)=\dfrac{1}{-e(M)}-(\mathcal F(-V_b)-\mathcal F(V_a))= \\ &=\dfrac{1}{-e(M)}-\height[V_a]\leq\dfrac{1}{-e(M)}-\height[V_{\text{can}}]\:,
 \end{aligned}\] 
 where $\gamma_a$ and $\gamma_b$ are such that $\mathcal F(V_a)\leq\mathcal F(-V_b)$ which is possible up to swapping $\xi$ and $\overline\xi$. Since $\height[V_{\text{can}}]\geq0$ the claim follows. 
\end{proof}

 We can prove that $c^+(\xi)$ is actually a graded root \cite{Nemethi} of $HF^+$ for graph manifolds. This is done by generalising similar results of Karakurt \cite{Karakurt} and Bodn\'ar-Plamenevskaya \cite{BP}.
 
\begin{prop}
 \label{lemma:plus}   
 Let $Y$ be a three-manifold presented by an almost-rational graph $\Gamma$. Moreover, let $\xi=J\lvert_Y$ the restriction of a Stein structure obtained by interpreting the complete blow-down $X_\Gamma$ of $\Gamma$ as Stein $2$-handles attached to $D^4$. Then, there exists a characteristic vector $V$ whose full path ends correctly such that $c^+(\xi)=T_{[V]}\in HF^+(-Y,\s_\xi)$.
\end{prop}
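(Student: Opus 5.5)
We will follow the strategy of Plamenevskaya's computation of $c^+$ for Stein fillings, as adapted by Karakurt and Bodn\'ar--Plamenevskaya. Let $W=X_\Gamma$ be the Stein domain with Stein structure $J$, and let $\mathfrak t=c_1(J)$. By Plamenevskaya \cite{OlgaP}, the map $F^+_{\overline W,\mathfrak t}\colon HF^+(-Y)\to HF^+(S^3)$ sends $c^+(\xi)$ to the bottom generator $1$. Moreover, $F^+_{\overline W,\mathfrak u}(c^+(\xi))=0$ for every $\mathfrak u\neq\mathfrak t$ on $W$. Since $c^+(\xi)\in\Ker U$, Proposition \ref{prop:plus} together with the Ozsv\'ath--Szab\'o basis lets us write
\[
c^+(\xi)=\sum_i \epsilon_i T_{[V_i]}, \qquad \epsilon_i\in\F,
\]
where the sum runs over the full paths that end correctly. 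The goal is to show that exactly one coefficient $\epsilon_i$ is non-zero.

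First we relate $W$ to the plumbing $P=X(\Gamma)$. Since $W$ is the complete blow-down of $\Gamma$, we have $P\cong W\#k\overline{\C P}^2$. By the blow-up formula, the cobordism maps of $\overline P$ in $\Spin^c$-structures $\mathfrak u=\mathfrak t\# (\pm E_1,\dots,\pm E_k)$ factor through $F^+_{\overline W,\mathfrak t}$, up to a $U$-power that vanishes when every $E_j$-coefficient is $\pm1$. Let $V$ be the characteristic vector on $\Gamma$ obtained by pulling back $c_1(J)$ and choosing $\langle c_1,E_j\rangle=\pm1$ on each exceptional class; any consistent choice of signs will do. Then $F^+_{\overline P,\mathfrak u_V}(c^+(\xi))=F^+_{\overline W,\mathfrak t}(c^+(\xi))=1$. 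By the duality statement after the Ozsv\'ath--Szab\'o proposition, $F^+_{\overline P,\mathfrak u_V}(T_{[W']})=1$ exactly when $[W']=[V]$. Therefore $\epsilon=1$ on the class $[V]$, and in particular $[V]$ must be a full path that ends correctly. We will check this directly: the rotation-number coordinates of $V$ on the surviving vertices satisfy $|v_i|\le -m(i)-2$ (Legendrian surgery gives $|\rot|\le -\tb-1$), and on the blown-down part the coordinates are the standard $S^3$ pattern, as in the proof of Theorem \ref{teo:tw}.

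Next we show that all other coefficients vanish. Take any other correctly-ending class $[V_i]\neq[V]$, with initial vector $V_i$ inducing $\mathfrak u_i$ on $P$. We want $F^+_{\overline P,\mathfrak u_i}(c^+(\xi))=0$. Here we use the grading argument of Plamenevskaya and Karakurt. If $\mathfrak u_i$ restricts on $W$ to some $\mathfrak t'\neq\mathfrak t$, the map vanishes because it factors through $F^+_{\overline W,\mathfrak t'}$, which kills $c^+(\xi)$. If instead $\mathfrak u_i$ restricts to $\mathfrak t$ but has some $|\langle c_1,E_j\rangle|\ge3$, the blow-up formula introduces a positive power of $U$, and this kills the image in $HF^+(S^3)$ of an element of $\Ker U$ landing in degree $0$. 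The remaining possibility is that $\mathfrak u_i$ differs from $\mathfrak u_V$ only by the signs on the $E_j$'s. We will show that these vectors lie in the same full path: flipping $\pm E_j$ corresponds to a single step at a $(-1)$-vertex of an intermediate blow-down graph, and these steps lift to sequences of steps in $\Gamma$. Consequently $c^+(\xi)=T_{[V]}$.

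The main obstacle is this last point. We must check, in Lemma \ref{lemB} style, that all sign choices on the exceptional spheres give vectors in the same full path, and that this full path does not drop out. To do this, we will induct on the number of blow-downs. At each stage there is at most one $(-1)$-vertex, because $\Gamma$ is almost-rational and negative-definite. Changing the sign of the exceptional coordinate amounts to adding $2\,\mathrm{PD}$ of that sphere, and we will verify that this addition is realised by a chain of steps in the original graph.
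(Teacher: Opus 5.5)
Your proposal is correct and is essentially the paper's argument. You expand $c^+(\xi)$ in the basis $\{T_{[V_i]}\}$, read off each coefficient through $F^+_{\overline P,\mathfrak u_i}$, and transfer to $X_\Gamma$ (you via the blow-up formula, the paper via the strict transform $\mathcal{ST}$); Plamenevskaya's uniqueness of the Stein $\Spin^c$-structure then leaves only lifts of $c_1(J)$ with $\pm1$ on the exceptional classes. The step you flag as the main obstacle, that all these lifts lie in a single full path, is exactly what the paper imports from Bodn\'ar--Plamenevskaya (``the preimage of $c_1(J)$ under $\mathcal{ST}$ is a full path''), and your inductive lifting of the $(-1)$-steps goes through in the paper's star-shaped setting, because there the blown-down part $\Gamma'_0$ stays a linear chain, so each lifted step meets the current exceptional sphere at most once.
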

\begin{proof}
  From the discussion in Subsection \ref{subsection:duality} above, we know that the family $\{T_{[V]}\}$ such that $V$ initiates a full path ending correctly is a basis of $\Ker U\subset HF^+(-Y)$, and the latter group contains $c^+(\xi)$. Note that the contact invariant is non-vanishing because $\xi$ is the boundary of the Stein domain $(X_\Gamma,J)$. We know that the first Chern class of the $\Spin^c$-structure induced by $J$ is determined by the rotation numbers in the $(-1)$-surgery presentation of $X_\Gamma$, which is given by blowing down the graph $\Gamma$. 
  
  Using a result of Plamenevskaya \cite{OlgaP}, we know that $J$ is the only $\Spin^c$-structure on $X_\Gamma$ such that $F^+_{\overline X_\Gamma,J}:HF^+(-Y,\s_\xi)\rightarrow HF^+(S^3)$ maps $c^+(\xi)$ to 1. In \cite[Section 5]{BP} there is the description of a function from $\text{Char}(\Gamma)$ to the set of the characteristic vectors on $X_\Gamma$, which is usually called \emph{strict transform} in literature and we denote it by $\mathcal{ST}$. It follows from the arguments in \cite{BP} that the preimage of the first Chern class of a Stein structure under $\mathcal{ST}$ is a full path; moreover, the strict transform induces (through the blow-downs) an automorphism of $HF^+(-Y,\s_\xi)$ which commutes with the cobordism maps, which means that if $V$ satisfies $c_1(J)=\mathcal{ST}(V)$ then $F^+_{\overline X_\Gamma,J}(T_{\mathcal{ST}[Z]})=1$ if and only if $[Z]=[V]$ for every $Z\in\text{Char}(\Gamma)$.
  
  The proof of our claim now follows as the one of \cite[Corollary 5.5]{BP}. Suppose that $c^+(\xi)=T_{[V_1]}+\cdots+T_{[V_k]}$, and let $\mathfrak u_i=c_1(V_i)$ and $\mathfrak v_i=c_1(\mathcal{ST}(V_i))$ for $i=1,...,k$; from what we said in the previous paragraph we have \[1=F^+_{\overline P_\Gamma,\mathfrak u_i}(c^+(\xi))=F^+_{\overline P_\Gamma,\mathfrak u_i}(T_{[V_i]})=F^+_{\overline X_\Gamma,\mathfrak v_i}(T_{\mathcal{ST}[V_i]})=F^+_{\overline X_\Gamma,\mathfrak v_i}(c^+(\xi))\] implying $\mathfrak v_1=\cdots=\mathfrak v_k=J$, and then $[V_1]=\cdots=[V_k]$; hence, we obtain that $c^+(\xi)$ cannot be written as a linear combination of more than one element of the basis $\{T_{[V]}\}$.
\end{proof}

Since (almost) every surgery on a torus link is a Seifert fibred space, Theorem \ref{teo:classification} immediately implies the following corollary. We assume $1\leq p\leq q$ coprime and $k\geq1$.

\begin{cor}
 \label{cor:link}
 Our classification holds for every surgery on a positive (resp. negative) torus link $T_{kp,\pm kq}$ whose surgery matrix $\Lambda$ is negative-definite (resp. has $\Lambda_{ii}>-pq$ and $\det(\Lambda)<0$).   
\end{cor}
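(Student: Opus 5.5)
The plan is to realise every surgery on a torus link $T_{kp,\pm kq}$ as a Seifert fibred space $M(e_0;r_1,\dots,r_n)$ whose standard graph is negative-definite exactly under the stated hypotheses on $\Lambda$. Once this is done, Theorem \ref{teo:classification} applies verbatim. Concretely, the torus link $T_{kp,kq}$ sits on a standard torus in $S^3$. Its complement is Seifert fibred over the disk with two exceptional fibres of orders $p$ and $q$ (the cores of the complementary solid tori), and the $k$ link components are regular fibres. Each rational surgery with coefficient $\Lambda_{ii}$ fills the $i$-th boundary torus and creates a new singular fibre. The resulting manifold is therefore Seifert fibred over $S^2$ with at most $k+2$ singular fibres. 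This reverses the complete blow-down construction described after Theorem \ref{teo:classification} and in the proof of Proposition \ref{prop:previous}: blowing up the two unknots that are cores of the solid tori recovers a star-shaped plumbing, with legs coming from the continued fraction expansions of $p/q$-data and of the surgery coefficients.

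The first key step is to compute $e_0$ and the $r_i$ explicitly by Rolfsen twists and blow-ups. Starting from the surgery diagram, I would unwind the torus link via the standard sequence of $\pm1$ blow-ups that turns $T_{p,q}$ into the plumbing of Figure \ref{graph} type (a $-1$ central vertex with two legs encoding $p,q$). I would then attach the $k$ regular-fibre leaves with framings $\Lambda_{ii}-pq$ corrected by the linking data. For the positive case, a computation with the linking matrix should show that $e(M)<0$ is equivalent to negative-definiteness of $\Lambda$. The reason is that $\det\Lambda$ and $e(M)$ differ by a positive factor (the product of the orders of the singular fibres, as in the formula $[D]\cdot[D]=1/e(M)$), and the principal minors correspond to sub-graphs.

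For the negative torus link $T_{kp,-kq}$ I would mirror the construction. The central framing then picks up $+pq$ instead of $-pq$, and the leg framings become $\Lambda_{ii}+pq$. The condition $\Lambda_{ii}>-pq$ should guarantee that each new coefficient $r_i$ lies in $(0,1)$ after normalising $e_0$, i.e.\ that each leg is genuinely negative. The condition $\det\Lambda<0$ should then translate into $e(M)<0$ via the same determinant identity.

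The main obstacle is the bookkeeping in this determinant/Euler-number identity. The relation $\det\Lambda=c\cdot e(M)$ needs to be verified with the correct sign of $c$, and every principal-minor condition must correspond to a leg or sub-star condition. Degenerate cases where some surgery coefficient yields no singular fibre, giving fewer than three fibres, or a lens space or $S^3$, also need care, and I would handle them using the $n\geq2$ versions of Proposition \ref{prop:tw} and Theorem \ref{teo:tw}. I would first check the identity on $k=1$ against the known $S^3_{-r}(T_{p,q})$ description recalled before Theorem \ref{cor:tw}, and then induct on $k$ by adding one regular fibre at a time.
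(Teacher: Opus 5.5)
Your strategy is the paper's. You identify the surgery with $M(-1;r_1,\dots,r_{k+2})$, where the first two legs come from the Seifert fibration of $T_{p,\pm q}$, so that $r_1+r_2-1=\mp\frac{1}{pq}$ and $e(M)=\mp\frac{1}{pq}+\sum_{i\geq3}r_i$. You then get $e(M)<0$ from a determinant identity.

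The identity you leave open is obtained in the paper directly, with no induction on $k$. After blowing down, $\Lambda=D\pm pq\,\mathbf e\mathbf e^T$ with $D=\text{Diag}(-\tfrac{1}{r_3},\dots,-\tfrac{1}{r_{k+2}})=\text{Diag}(\Lambda_{ii}\mp pq)$. The rank-one determinant formula then gives
\[\det\Lambda=\det D\,\Big(1\mp pq\sum_{i\geq3}r_i\Big)=\mp pq\,\det(D)\,e(M)\:.\]

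Two of your claims need correcting before this closes the argument.

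\textbf{The factor is not always positive.} In the positive case, $\Lambda_{ii}<0$ gives $D_{ii}=\Lambda_{ii}-pq<-pq$, hence $r_i\in(0,\frac{1}{pq})$. So the factor $-pq\det D$ has sign $(-1)^{k+1}$, while $\det\Lambda$ has sign $(-1)^k$. Comparing these two signs is what yields $e(M)<0$. Only this one implication is needed, and it uses only the signs of the diagonal entries and of $\det\Lambda$. No correspondence between all principal minors and sub-graphs is required.

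\textbf{The role of $\Lambda_{ii}>-pq$ is different from what you say.} In the negative case, this hypothesis is not what puts the new coefficients into $(0,1)$ after normalisation; every non-integral rational normalises into $(0,1)$ anyway. Its actual role is to make $D=\text{Diag}(\Lambda_{ii}+pq)$ positive-definite. Then the factor $pq\det D$ is positive, and $\det\Lambda<0$ forces $e(M)<0$. The resulting $r_i$ for $i\geq3$ are negative. This is harmless, because $e(M)=e_0+\sum r_i$ is unchanged by renormalising the Seifert invariants.
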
 
\begin{proof}
 It follows from Theorem \ref{teo:classification} once we prove that if the surgery matrix satisfies the assumption in the statement then the standard graph of the resulting 3-manifold is negative-definite.
 For simplicity, in this proof we assume that $r_i$ can be negative (for $T_{p,-q}$) when $i\geq3$. 
 A surgery on the torus link $T_{kp,\pm kq}$ with $q\geq p>0$ coprime and $k\geq1$ has graph obtained by adding $k$ legs to the graph representing the Seifert fibration of $T_{p,\pm q}$ in $S^3$, which is $M(-1;r_1,r_2)$ and the coefficients satisfy $r_1+r_2-1=\mp\frac{1}{pq}$ from Equation \eqref{eq:Brieskorn}. 
 
 Blowing down the graph, we obtain that the surgery matrix is \[\Lambda(r_3,...,r_{k+2})=D\pm pq\mathbf E\:,\] where $D=\text{Diag}(-\frac{1}{r_3},...,-\frac{1}{r_{k+3}})$ and $\mathbf E$ is the $k\times k$-matrix whose every entry is equal to one. Since in the case of $T_{p,q}$ we have that $\Lambda$ is negative-definite, its diagonal entries are negative; hence, we have $-\frac{1}{r_i}<-pq<0$ and then $0<r_i<\frac{1}{pq}$ (each negative framing on $T_{p,q}$ can be realised by some $(r_3,...,r_{k+2})$ in this way). In the case of $T_{p,-q}$ we just have $-\frac{1}{r_i}=\Lambda_{ii}+pq>0$ for every $i=3,...,k+2$.

 Since $D$ is then invertible, from standard linear algebra we can write \[\det(\Lambda)=\det(D)(1\pm pq\cdot\mathbf e^TD^{-1}\mathbf e)=\det(D)(1\mp pq(r_3+\cdots+r_{k+2}))\:,\] where $\mathbf e$ is the vector whose entries are all one. In addition, for $T_{p,q}$ the matrices $\Lambda$ and $D$ are both negative-definite, while for $T_{p,-q}$ the matrix $D$ is positive-definite. This means $\det(\Lambda)$ and $\det(D)$ have either the same sign (for $T_{p,q}$) or the opposite sign (for $T_{p,-q}$). We then conclude that $1\mp pq(r_3+\cdots+r_{k+2})$ is either positive or negative depending on the case, thus $r_3+\cdots+r_{k+2}<\pm\frac{1}{pq}$.

 The number $e(M)$ where $M:=S^3_\Lambda(T_{p,\pm q})=M(-1;r_1,...,r_{k+2})$ can then be estimated as follows: \[e(M)=-1+r_1+\cdots+r_{k+2}=\mp\dfrac{1}{pq}+r_3+\cdots+r_{k+2}<\mp\dfrac{1}{pq}\pm\dfrac{1}{pq}=0\:,\] which implies its standard graph is negative-definite. We recall that a Seifert fibred space has negative-definite standard graph if and only if $e(M)<0$, see \cite{Saveliev}. 
\end{proof}

\subsection{Obstructions to contact-type embeddings}
From Theorem \ref{teo:classification} and Proposition \ref{lemma:plus} we know that any contact structure with non-vanishing invariant $c^+(\xi)$ on $M$ has a contact surgery presentation given by the blown-down standard graph $\Gamma$, and $c^+(\xi)=T_{[V]}$ for a unique initial vector $V$ whose full path ends correctly. As described in Section \ref{section:four}, we can determine which vertices of the graph survive the blow-down procedure. Using the notation from Section \ref{section:four}, we denote by $Y_0$ the lens space with standard graph $\Gamma_0\subset\Gamma$, obtained by removing all the legs of $\Gamma$ except the two corresponding to the biggest coefficients $r_1$ and $r_2$, and by $\Gamma'\subset\Gamma_0$ the maximal sub-graph representing $S^3$. 

Passing to the minimal resolution, the sub-graph  $\Gamma'$ is blown-down completely, while $\Gamma_0$ becomes a minimal negative-definite graph for $Y_0$, and finally $\Gamma\setminus\Gamma_0$ is left unchanged except for the vertices which are connected to the centre of $\Gamma$, that get cabled into a torus link. 

The resulting surgery presentation gives a Stein filling $(X_\Gamma,J)$ for $(M,\xi)$ which consists only of Stein 2-handles, and it is negative-definite.
\begin{lemma}
 \label{lemma:main}
 Suppose that $M$ is a non-trivial Seifert fibred space whose standard graph is negative-definite. There is a fillable contact structure $\xi$ on $M$ such that $\xi$ is self-conjugate if and only if $Q_{X_\Gamma}$ is even, and in this case $d(M,s_\xi)$ is positive. 
\end{lemma}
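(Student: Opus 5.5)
By Theorem \ref{teo:classification} and Proposition \ref{prop:previous}, the fillable structures on $M$ are exactly the restrictions $\xi=J\lvert_M$ of the Stein structures $J$ on the fixed smooth filling $X_\Gamma$. These Stein structures are given by Legendrian realisations of the blown-down diagram, and each satisfies $\langle c_1(J),[\Sigma_i]\rangle=\rot(L_i)$ on the handle classes. The plan is to translate self-conjugacy into a statement about rotation numbers, and then to compute $d$ from $c_1^2$.

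First I handle the equivalence. The conjugate $\overline\xi$ is the restriction of $\overline J$, which is realised by the Legendrian link with all rotation numbers negated, so $c_1(\overline J)=-c_1(J)$. By Plamenevskaya's result, used in the proof of Theorem \ref{teo:classification}, two such structures are isotopic if and only if the Chern classes of their Stein structures agree. Hence $\xi\cong\overline\xi$ if and only if $c_1(J)=0$, that is, all rotation numbers vanish. A Legendrian realisation of a component with Thurston–Bennequin number $\tb_i$ has $\rot_i\equiv \tb_i+1 \pmod 2$, and its framing is $Q_{ii}=\tb_i-1$, so $\rot_i\equiv Q_{ii}\pmod 2$. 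Therefore $c_1=0$ forces every $Q_{ii}$ to be even, i.e.\ $Q_{X_\Gamma}$ is even.

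Conversely, suppose $Q$ is even. The components are stabilised torus knots and unknots, and each stabilisation changes $\rot$ by $\pm1$. The number of stabilisations needed on a component is $\tb_{\max}-\tb_i$, and for a component whose maximal realisation has $\rot=0$ this number has the parity of $Q_{ii}$ shifted appropriately. Using the explicit coefficients at the end of the proof of Proposition \ref{prop:previous}, I would check that the allowed range of rotation numbers $\{-s_i,-s_i+2,\dots,s_i\}$ contains $0$ exactly when $Q_{ii}$ is even. I expect this step to be the main bookkeeping obstacle, since the torus components $T_{d_2,d_1}$ have $\tb_{\max}=d_1d_2-d_1-d_2$, and the parity has to be matched carefully against $\rot=0$. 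Once it is done, choosing $\rot_i=0$ on every component gives a self-conjugate fillable $\xi$.

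Second, I compute the correction term. Since $X_\Gamma$ is a negative-definite filling consisting only of $2$-handles, Ozsváth–Szabó's inequality gives $d(M,\s_\xi)\geq \frac{c_1^2(\mathfrak t)+b_2(X_\Gamma)}{4}$ for every $\mathfrak t$ extending $\s_\xi$. Taking $\mathfrak t$ to be the Stein $\Spin^c$-structure with $c_1=0$, we get $d\geq b_2(X_\Gamma)/4>0$. Here $b_2\geq1$ because $M\neq S^3$: if $b_2=0$, the filling would be $D^4$ and $M=S^3$. This gives $d>0$. Alternatively, one can appeal to Theorem \ref{teo:correction_term} with the characteristic vector corresponding to $c_1=0$ via the strict transform.
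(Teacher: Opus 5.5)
Your proof is correct and follows the paper's argument. Self-conjugacy forces $c_1(J)=c_1(\overline J)=-c_1(J)$, so all rotation numbers vanish and, by the parity relation, all framings are even. Positivity then comes from the negative-definite bound $d(M,\s_\xi)\geq (c_1^2(J)+b_2(X_\Gamma))/4=(|\Gamma|-|\Gamma'|)/4>0$.

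The converse step you flag as the main obstacle is immediate, which is why the paper calls it trivial. The maximal-$\tb$ realisations of the unknot and positive torus knot components have $\rot=0$, so the attainable rotation numbers on each component form a set symmetric about $0$. Every element of that set is $\equiv Q_{ii}\pmod 2$, so $0$ is attainable exactly when $Q_{ii}$ is even. The resulting isotopy $\xi\cong\overline\xi$ follows from the classification count in Theorem \ref{teo:classification}, not from Plamenevskaya alone.
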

\begin{proof}
 If we assume that there exists a self-conjugate $\xi$, all the framings involved in the minimal resolution are even, and each Legendrian knot appearing in the surgery presentation of $\xi$ is a standard torus knot with the same number of positive and negative stabilisations; in particular, the associated first Chern class $c_1(J)$ is the zero characteristic vector.
 The converse is trivial.

 It is straightforward to compute the $d_3$-invariant of the self-conjugate $\xi$. Since the 4-manifold $X_\Gamma$ is negative-definite, by Proposition \ref{prop:correction_term} we have \[d(M,\s_\xi)\geq d_3(\xi)=\dfrac{c_1^2(J)[X_\Gamma]+|\Gamma|-|\Gamma'|}{4}=\dfrac{|\Gamma|-|\Gamma'|}{4}>0\] as $c_1^2(J)[X_\Gamma]=0$. In conclusion, the correction term of a self-conjugate and fillable contact structure on $M$ is positive.
\end{proof}

We prove that the contact invariant of a fillable structure on $M$, inducing a spin structure, is either not $\Theta^+_{\s_\xi}$, which means is not in $U^n\cdot HF^+(-M,\s_\xi)$ for some $n\geq1$, or the correction term $d(M,\s_\xi)$ is positive. 

\begin{proof}[Proof of Proposition \ref{prop:main}]
 Suppose that $\xi$ is fillable and $c^+(\xi)=\Theta^+_{\s_\xi}$. Since by \cite[Theorem 2.10]{G-fillability} one has $c^+(\overline\xi)=\mathcal J\Theta^+_{\s_\xi}=\Theta^+_{\s_\xi}=c^+(\xi)$ as $\s_\xi$ is spin, and since by our classification in Theorem \ref{teo:classification} all the fillable structures on $M$ have distinct invariant, we have that $\xi$ is self-conjugate. Hence, it follows by Lemma \ref{lemma:main} that the correction term $d(M,\s_\xi)$ is positive.
\end{proof}
We now prove the first part of Theorem \ref{prop:convex}. We recall that a generalised $L$-space is a 3-manifold with trivial $HF_{\text{red}}$. 

\begin{lemma}
  \label{prop:convex1}
  Suppose that $M$ is a negative-definite Seifert fibred space. If $(M,\xi)$ admits a separating contact-type embedding in a strong symplectic filling $(X,\omega)$ of a generalised $L$-space then $\dim_\F\widehat{HF}_d(M,\s_\xi)=1$.
\end{lemma}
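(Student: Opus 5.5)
We plan to reduce the statement to a computation in $HF^+$ by cutting the filling along $M$ and using the functoriality of the contact invariant. Let $(X,\omega)$ be a strong filling of a generalised $L$-space $(Z,\xi_Z)$ in which $(M,\xi)$ embeds as a separating contact-type hypersurface. Then $X=X_1\cup_M X_2$, where $(X_1,\omega|_{X_1})$ is a strong filling of $(M,\xi)$, and $X_2$ is a strong symplectic cobordism from $(M,\xi)$ to $(Z,\xi_Z)$ that is not a cap, so $Z\neq\emptyset$. Since $Z$ is strongly fillable, $c^+(\xi_Z)\neq0$. Naturality of the contact invariant under strong cobordisms gives
\[F^+_{\overline X_2,\mathfrak t}(c^+(\xi_Z))=c^+(\xi)\]
for the canonical $\Spin^c$-structure $\mathfrak t$ (Ghiggini, Echeverria, Mrowka--Rollin). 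Because $Z$ is a generalised $L$-space, $\Ker U\cap HF^+(-Z,\s_{\xi_Z})$ is one-dimensional, spanned by $\Theta^+_{\s_{\xi_Z}}$. Hence $c^+(\xi_Z)=\Theta^+$, which lies in $U^n\cdot HF^+$ for every $n$. Since cobordism maps are $U$-equivariant, $c^+(\xi)\in U^n\cdot HF^+(-M,\s_\xi)$ for all $n$. Combined with $c^+(\xi)\in\Ker U$ and $c^+(\xi)\neq0$ (it is fillable), this forces $c^+(\xi)=\Theta^+_{\s_\xi}$.

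Next we read off the grading. By Proposition \ref{lemma:plus}, $c^+(\xi)=T_{[V]}$ for a full path $[V]$ ending correctly, and by the Ozsv\'ath--Szab\'o duality proposition $T_{[V]}=\Theta^+$ sits in the top grading $-d$ of $\Ker U\subset HF^+(-M,\s_\xi)$. The key claim is that $\Theta^+$ spans the full degree-$(-d)$ part of $\Ker U$. Indeed, the $U$-tower part of $\Ker U$ in that degree is one-dimensional. Any other element of $\Ker U$ in degree $-d$ would be a reduced class of maximal grading, which is dual to a correctly ending full path $[W]$ with $M(W)=d$ not in the image of the tower. For almost-rational graphs, the graded-root structure (N\'emethi) makes elements of maximal degree $d$ in $HF^-(M)$ modulo $U$ split as tower plus reduced summands. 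So the statement reduces to showing that no other full path of Maslov grading $d$ exists.

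Here we use that $\Theta^+$ is hit by a cobordism map from $\Theta^+_Z$. If there were two independent full paths $[W_1],[W_2]$ of grading $d$, the top-degree part of $\Ker U$ would be two-dimensional. We would then use the $\J$-symmetry: $\J\Theta^+=\Theta^+$ in $\s_\xi$ versus $\overline{\s_\xi}$. Together with the classification (Theorem \ref{teo:classification}, where distinct fillable structures have distinct $c^+$) and Remark \ref{rmk:tw}, this should show that the full path $[V]$ with $T_{[V]}=\Theta^+$ is the only correctly ending path in degree $d$. Concretely, the aim is to show that $\Theta^+$ being an element of $\bigcap_n U^n HF^+$, and equal to the single functional $T_{[V]}$, rules out any other $T_{[W]}$ in the same degree. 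This is because the $U$-tower top element is $\sum T_{[W]}$ over the paths of degree $d$, as in the graded-root description, and Proposition \ref{lemma:plus} says $c^+$ is a single $T$. Therefore exactly one full path has grading $d$. By Theorem \ref{teo:correction_term} and Proposition \ref{prop:correction_term}, $\widehat{HF}^{\text{ev}}_d(M,\s_\xi)$ is then one-dimensional. Finally, $\widehat{HF}_d$ has no odd part, since the odd part sits in gradings $<d$ by the parity argument in Proposition \ref{prop:correction_term}. This gives $\dim_\F\widehat{HF}_d(M,\s_\xi)=1$.

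The main obstacle is the middle step: justifying that the top-graded tower generator $\Theta^+$ equals the sum of all $T_{[W]}$ with $M(W)=d$. This needs a careful look at how the $U$-tower in $HF^-(M)$ is represented by the full path classes of maximal grading (every such class maps to the tower generator under the top grading quotient). The first attempt would be to use the graded-root/lattice cohomology description of N\'emethi, where the top level of the root has as many vertices as full paths of grading $d$, and $\Theta^+$ is dual to their sum.
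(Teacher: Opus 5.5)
Your proposal follows the paper's proof: naturality of $c^+$ under the concave piece, together with $HF_{\text{red}}=0$ on the outer boundary, gives $c^+(\xi)=\Theta^+_{\s_\xi}$; then Proposition~\ref{lemma:plus} writes $c^+(\xi)$ as a single $T_{[V]}$, which forces a unique correctly ending full path in grading $d$, and the paper compresses this into one line (note that Proposition~\ref{lemma:plus} applies to $\xi$ only after Theorem~\ref{teo:classification}, via $c^+(\xi)\neq0$, identifies $\xi$ as a Legendrian surgery on the complete blow-down). The identity $\Theta^+=\sum_{M(W)=d}T_{[W]}$ that you flag as the obstacle needs neither graded roots nor the $\J$-symmetry detour: each such $[W]$ is a non-torsion class in the top degree of $HF^-(M,\s_\xi)$, hence the tower generator plus torsion, while $\Theta^+\in\Ker U\cap\bigcap_n U^n HF^+(-M,\s_\xi)$ pairs trivially with torsion and with $U\cdot HF^-$, so it evaluates to $1$ on every such $[W]$.
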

\begin{proof}
 Denote by $(M,\xi)$ the negative-definite Seifert fibred space, and suppose that the contact-type embedding exists. The strong symplectic filling $(X,\omega)$ is then split along $(M,\xi)$ in a convex part and a concave part: we call the first one $(F,\omega\lvert_F)$ which is a symplectic filling of $(M,\xi)$, and the second one $(C,\omega\lvert_C)$ which is a strong symplectic cobordism from $(M,\xi)$ to $(Y_0=\partial X,\s_0=J\lvert_{Y_0})$, where $J$ is an almost-complex structure compatible with $\omega$. Since by assumption $Y_0$ is a generalised $L$-space, and thus it has vanishing $HF_{\text{red}}(Y_0)$, we have that $c^+(\xi_0)=\Theta^+_{\s_0}$; therefore, from \cite[Theorem 1]{Etcheverria} and \cite[Theorem 2.1]{MT} we obtain that $c^+(\xi)=\Theta^+_{\s_\xi}$. 

 From our Theorem \ref{teo:classification} and Proposition \ref{lemma:plus} we know that $c^+(\xi)=\Theta^+_{\s_\xi}$ if and only if the dimension of $\widehat{HF}_d(-Y,\s_\xi)$ is one, where $d=d(Y,\s_\xi)$ is the correction term. 
\end{proof}

We can finish the proof of Theorem \ref{prop:convex}. We show that the same conclusion of Lemma \ref{prop:convex1} holds when $(M,\xi)$ is the boundary of a rationally convex domain in a closed K\"ahler manifold.
\begin{proof}[Proof of Theorem \ref{prop:convex}]
 The first part is precisely Lemma \ref{prop:convex1}. For the second part we follow Mark and Tosun argument in \cite[Theorem 1.7(b)]{MT3} and then using the results in \cite{BGS}, up to a small isotopy of the Stein filling $(F,J\lvert_F)$ of $(M,\xi)$, we can find a symplectic surface $\Sigma\subset \widehat X\setminus F$ which can be assumed to satisfy $n:=\Sigma\cdot\Sigma>2g-2$ when $g\geq1$. The neighbourhood $D_\Sigma$ of $\Sigma$ is a symplectic cap of the manifold $Y_{g,-n}$, the circle bundle over a surface of genus $g$ with Euler number $-n$. 
 
 We denote by $X$ the convex part of $\widehat X\setminus Y_{g,-n}$ which is then a strong symplectic filling of $Y_{g,-n}$. Ozsv\'ath and Szab\'o proved that when $|n|>2g-2$ the manifold $Y_{g,-n}$ is a generalised $L$-space; therefore, we can fall back to the case in Lemma \ref{prop:convex1}. 
\end{proof}

We conclude the paper by proving the application of our main results to Brieskorn spheres.
\begin{proof}[Proof of Corollary \ref{gompf}]
Let $Y=\Sigma(a_1,...,a_n)$ be a Brieskorn sphere with a separating contact-type embedding in a symplectic filling of a lens space. Integral homology spheres bounding negative-definite 4-manifolds have non-negative correction term \cite[Corollary 9.8]{OSz-negative}, this implies $d(Y)\geq0$. Meanwhile, since every symplectic filling of a lens space is negative-definite \cite{OSz-genus}, and such a filling exists for every lens space, we also have $d(-Y)\geq0$; therefore, we obtain $d(Y)=0$. From Theorem \ref{prop:convex} the contact 3-manifold $(Y,\xi)$ should satisfy $c^+(\xi)=\Theta^+$, but then the correction term should be positive because of Lemma \ref{lemma:main}. 
\end{proof}

\end{document}